\documentclass[12pt]{article}

\usepackage{amsthm}
\usepackage{amscd}
 \usepackage{txfonts}

\usepackage{amsmath}
\usepackage{mathrsfs}
\usepackage{enumerate}

\usepackage{mathtools}
\mathtoolsset{showonlyrefs}
\allowdisplaybreaks[1]
\usepackage[english]{babel}
\usepackage{color}
\usepackage{hyperref}
\usepackage{lastpage}
\usepackage{fancyhdr}
\usepackage{geometry}
\usepackage{graphicx}
\allowdisplaybreaks[1]
\hypersetup{
    colorlinks,
    citecolor=black,
    filecolor=black,
    linkcolor=black,
    urlcolor=black
}
\geometry{
  top=1.2in,           
  inner=1.15in,
  outer=1.15in,
  bottom=1.2in,  
   headheight=-2ex,       
   headsep=4ex,        
}
\usepackage{fancyhdr}
	\pagestyle{fancy}
	\headheight=15pt
	\fancyhead{}
	\fancyfoot{}

	\fancyhead[L]{\rightmark}
	\fancyhead[R]{\textbf{\thepage}}
	\setcounter{secnumdepth}{3}
	\usepackage{tocloft}


\numberwithin{equation}{section}
\theoremstyle{plain}
	\newtheorem{theorem}{Theorem}[section]           
	\newtheorem{lemma}[theorem]{Lemma}                 
	\newtheorem{assumption}{Assumption}[section]  
\theoremstyle{definition}
	\newtheorem{definition}[theorem]{Definition}                 
\theoremstyle{remark}
	\newtheorem{remark}[theorem]{Remark}                


\newcommand{\func}[1]{\operatorname{#1}}

\title{\vspace{-1.2in}On Degenerate  Linear Stochastic Evolution Equations Driven by Jump Processes \vspace{-0.5cm}}
\date{\today}

\begin{document}

\maketitle

\begin{tabular}{l}
{\large\textbf{James-Michael Leahy}} \vspace{0.2cm}\\
 The University of Edinburgh, E-mail: \href{J.Leahy-2@sms.ed.ac.uk}{J.Leahy-2@sms.ed.ac.uk}\vspace{0.4cm} \\
{\large \textbf{Remigijus Mikulevi\v{c}ius}}\vspace{0.2cm}\\
 The University of Southern California, E-mail: \href{Mikulvcs@math.usc.edu }{Mikulvcs@math.usc.edu}\vspace{0.4cm}\\
 {\large\textbf{Abstract}} \vspace{0.2cm} \\
 \begin{minipage}[t]{0.9\columnwidth}%
We prove the existence and uniqueness  of solutions of degenerate linear stochastic evolution equations driven by jump processes in a Hilbert scale using the variational framework of stochastic evolution equations and  the method of vanishing viscosity. As an application of this result, we derive the existence and uniqueness of solutions of degenerate parabolic linear stochastic integro-differential equations (SIDEs) in the  Sobolev scale.  The SIDEs that we consider arise in the theory of non-linear filtering as the equations governing the conditional density of a degenerate jump-diffusion signal given a  jump-diffusion observation, possibly with correlated noise.
 \end{minipage}
\end{tabular}

\tableofcontents

\thispagestyle{empty}

\section{Introduction}

Let $\left( \Omega ,\mathcal{F},\mathbf{P}\right) $ be a probability space
with the filtration $\mathbf{F}=\left( \mathcal{F}_{t}\right) _{0\leq t\leq
T}$ of sigma-algebras satisfying usual conditions. In a triple of Hilbert
spaces $(H^{\alpha +\mu },H^{\alpha },H^{\alpha -\mu })$ with parameters $%
\mu \in (0,1]$ and $\alpha \geq \mu $, we consider a linear stochastic
evolution equation given by 
\begin{align}
du_{t}& =\left( \mathcal{L}_{t}u_{t}+f_{t}\right) dV_{t}+\left( \mathcal{M}%
_{t}u_{t-}+g_{t}\right) dM_{t},\;\;t\leq T,  \label{eq:SESintro} \\
u_{0}& =\varphi ,  \notag
\end{align}%
where $V_{t}$ is a continuous non-decreasing process, $M_{t}$ is a
cylindrical square integrable martingale, $\mathcal{L}$ and $\mathcal{M}$
are linear adapted operators, and $\phi,f,$ and $g$ are adapted input
functions.

By virtue of Theorems 2.9 and 2.10 in \cite{Gy82}, under some suitable
conditions on the data $\varphi ,f$ and $g,$ if $\mathcal{L}$ satisfies a
growth assumption and $\mathcal{L}$ and $\mathcal{M}$ satisfy a coercivity
condition in the triple $(H^{\alpha +\mu },H^{\alpha },H^{\alpha -\mu })$,
then there exists a unique solution $(u_{t})_{t\leq T}$ of (\ref{eq:SESintro}%
) that is strongly c\`{a}dl\`{a}g in $H^{\alpha }$ and belongs to $%
L^{2}(\Omega \times [ 0,T],\mathcal{O}_{T},dV_{t}d\mathbf{P};H^{\alpha +\mu})
$, where $\mathcal{O}_{T}$ is the optional sigma-algebra on $\Omega \times
\lbrack 0,T]$. In this paper, under a weaker assumption than coercivity (see
Assumption \ref{asm:mainSES}$(\alpha ,\mu )$ below) and using the method of
vanishing viscosity, we prove that there exists a unique solution $%
(u_{t})_{t\leq T}$ of (\ref{eq:SESintro}) that is strongly c\`{a}dl\`{a}g in 
$H^{\alpha^{\prime }}$ for all $\alpha^{\prime }<\alpha$ and belongs to $%
L^{2}(\Omega \times \lbrack 0,T],dV_{t}d\mathbf{P};H^{\alpha })$.
Furthermore, under some additional assumptions on the operators $\mathcal{L}$
and $\mathcal{M}$ we can show that the solution $u$ is weakly c\`{a}dl\`{a}g
in $H^{\alpha }$.

The variational theory of deterministic degenerate linear elliptic and
parabolic PDEs was established by O.A. Oleinik and E.V. Radkevich in \cite%
{Ol65} and \cite{OlRa71}. In \cite{Pa75}, {\'E}. Pardoux developed the
variational theory of monotone stochastic evolution equations, which was
extended in \cite{KrRo77}, \cite{KrRo79}, \cite{GyKr80}, and \cite{Gy82} by
N.V. Krylov, B.L Rozovski\u{\i}, and I. Gy{\"o}ngy. Degenerate parabolic
stochastic partial differential equations (SPDEs) driven by continuous noise
were first investigated by N.V. Krylov and B.L. Rozovski\u{\i} in \cite%
{KrRo82a}. These types of equations arise in the theory of non-linear
filtering of continuous diffusion processes as the Zakai equation and as
equations governing the inverse flow of continuous diffusions. In \cite%
{GeGyKr14}, the solvability of systems of linear SPDEs in Sobolev spaces was
proved by M. Gerencs{\'e}r, I. Gy{\"o}ngy, and N.V. Krylov, and a small gap
in the proof of the main result of \cite{KrRo82a} was fixed. In Chapters 2,
3, and 4 of \cite{Ro90}, B.L. Rozovski\u{\i} offers a unified presentation
and extension of earlier results on the variational framework of linear
stochastic evolution systems and SPDEs driven by continuous martingales
(e.g.\ \cite{Pa75}, \cite{KrRo77}, \cite{KrRo79}, and \cite{KrRo82a}). Our
existence and uniqueness result on degenerate linear stochastic evolution
equations driven by jump processes (Theorem \ref{thm:degeneratespideexist}
below) extends Theorem 2 in Chapter-3-Section 2.2 of \cite{Ro90} to include
the important case of equations driven by jump processes. It is also worth
mentioning that the semigroup approach for non-degenerate SPDEs driven by
L\'evy processes is well-studied (see, e.g. \cite{PeZa07} and \cite{PeZa13}).

As a special case of (\ref{eq:SESintro}), we will consider a system of
stochastic integro-differential equations. Before introducing the equation,
let us describe our driving processes. Let $\mathcal{P}_{T}$ and $\mathcal{R}%
_{T}$, be the predictable and progressive sigma-algebras on $\Omega \times
\lbrack 0,T]$, respectively. Let $\eta (dt,dz)$ be an integer-valued random
measure on $(\mathbf{R}_{+}\times Z,\allowbreak \mathcal{B}(\mathbf{R}%
_{+})\otimes \mathcal{Z})$ with predictable compensator $\pi _{t}(dz)dV_{t}$%
. Let $\tilde{\eta}(dt,dz)=\eta (dt,dz)-\pi _{t}(dz)dt$ be the martingale
measure corresponding to $\eta (dt,dz)$. Let $(Z^{2},\mathcal{Z}^{2})$ be a
measurable space with $\mathcal{R}_{T}$-measurable family $\pi _{t}^{2}(dz)$
of sigma-finite random measures on $Z$. Let $w_{t}=(w_{t}^{\varrho })_{\rho
\in \mathbf{N}},$ $t\geq 0$, be a sequence of continuous local uncorrelated
martingales such that $d\langle w^{\varrho }\rangle _{t}=dV_{t}$, for all $%
\rho \in \mathbf{N}.$ Let $d_{1},d_{2}\in \mathbf{N}$. For convenience, we
set $(Z^{1},\mathcal{Z}^{1})=(Z,\mathcal{Z})$ and $\pi _{t}^{1}=\pi _{t}$.
We consider the $d_{2}$-dimensional system of SIDEs on $[0,T]\times \mathbf{R%
}^{d_{1}}$ given by 
\begin{align}
du_{t}^{l}& =\left( (\mathcal{L}_{t}^{1;l}+\mathcal{L}%
_{t}^{2;l})u_{t}+b_{t}^{i}\partial _{i}u_{t}^{l}+c_{t}^{l\bar{l}}u_{t}^{\bar{%
l}}(x)+f_{t}^{l}\right) dV_{t}+(\mathcal{N}_{t}^{l\varrho
}u_{t}+g_{t}^{l\varrho })dw_{t}^{\varrho }  \label{eq:SIDEintro} \\
& \quad +\int_{Z^{1}}\left( \mathcal{I}_{t,z}^{l}u_{t-}^{\bar{l}%
}+h_{t}^{l}(z)\right) \tilde{\eta}(dt,dz),  \notag \\
u_{0}^{l}& =\varphi ^{l},\;\;l\in \{1,\ldots ,d_{2}\},  \notag
\end{align}%
where for $k\in \{1,2\}$, $l\in \{1,\ldots ,d_{2}\}$, and $\phi \in
C_{c}^{\infty }(\mathbf{R}^{d_{1}};\mathbf{R}^{d_{2}})$, 
\begin{align*}
\mathcal{L}_{t}^{k;l}\phi (x)& :=\frac{1}{2}\sigma _{t}^{k;i\varrho
}(x)\sigma _{t}^{k;j\varrho }(x)\partial _{ij}\phi ^{l}(x)+\sigma
_{t}^{k;i\varrho }(x)\upsilon _{t}^{k;l\bar{l}\varrho }(x)\partial _{i}\phi
^{\bar{l}}(x) \\
& \quad \int_{Z^{k}}\left( \left( \delta _{l\bar{l}}+\rho _{\omega ,t}^{k;l%
\bar{l}}(x,z)\right) \left( \phi ^{\bar{l}}(x+\zeta ^{k}(x,z))-\phi ^{\bar{l}%
}(x)\right) -\zeta _{t}^{k;i}(x,z)\partial _{i}\phi ^{l}(x)\right) \pi
_{t}^{k}(dz) \\
\mathcal{N}_{t}^{l\varrho }\phi (x)& :=\sigma _{t}^{1;i\varrho }(x)\partial
_{i}\phi ^{l}(x)+\upsilon _{t}^{1;l\bar{l}\varrho }(x)\phi ^{\bar{l}%
}(x),\;\varrho \in \mathbf{N}, \\
\mathcal{I}_{t,z}^{l}\phi (x)& :=(\delta _{l\bar{l}}+\rho _{t}^{1;l\bar{l}%
}(x,z))\phi ^{\bar{l}}(x+\zeta _{t}^{1}(x,z))-\phi ^{l}(x),
\end{align*}%
and where $\delta _{l\bar{l}}$ is the Kronecker delta (i.e. $\delta _{l\bar{l%
}}=1$ if $l=\bar{l}$ and $\delta _{l\bar{l}}=0$ otherwise). The summation
convention with respect to repeated indices is used here and below;
summation over $i$ is performed over the set $\{1,\ldots ,d_{1}\}$ and the
summation over $l,\bar{l}$ is performed over the set $\{1,\ldots ,d_{2}\}.$
Without the noise term $\tilde{\eta}(dt,dz)$ and integro-differential
operators in $\mathcal{L}^{1}$ and $\mathcal{L}^{2}$, equation %
\eqref{eq:SIDEintro} has been well-studied (see, e.g. \cite{KrRo82a}, \cite%
{Ro90} (Chapter 3), and the recent paper \cite{GeGyKr14}).

Let $(H^{\alpha }(\mathbf{R}^{d_{1}},\mathbf{R}^{d_{2}}))_{\alpha \in 
\mathbf{R}}$ be the $L^{2}$-Sobolev-scale (i.e. the Bessel potential spaces
with $p=2$). For each $m\in \mathbf{N}$, using our theorem on degenerate
stochastic evolution equations discussed above, under suitable measurability
and regularity conditions on the coefficients, initial condition, and free
terms, we derive the existence of a unique solution $(u_{t})_{t\le T}$ of %
\eqref{eq:SIDEintro} that is weakly c\`{a}dl\`{a}g in $H^{m}(\mathbf{R}%
^{d_{1}},\mathbf{R}^{d_{2}})$, strongly c\`{a}dl\`{a}g in $H^{\alpha}(%
\mathbf{R}^{d_{1}},\mathbf{R}^{d_{2}})$ for all $\alpha<m$, and belongs to $%
L^{2}(\Omega \times [ 0,T],\mathcal{O}_{T},dV_{t}d\mathbf{P};\allowbreak
H^{m}(\mathbf{R}^{d_{1}},\mathbf{R}^{d_{2}}))$.

Degenerate stochastic integro-differential equations of type %
\eqref{eq:SIDEintro} arise in the theory of non-linear filtering of
semimartingales as the Zakai equation and as the equations governing the
inverse flow of jump diffusion processes. We constructed solutions of the
above equation (with $\pi_t(dz)$ deterministic and independent of time)
using the method of stochastic characteristics in \cite{LeMi14b} and \cite%
{LeMi14}. In \cite{DaGy14}, I. Gy{\"o}ngy and K. Dareiotis proved the
existence, uniqueness, and the positivity of solutions of non-linear
stochastic integro-differential equations with non-degenerate stochastic
parabolicity using a comparison principle. It is worth mentioning that the
main estimate used in the proof of uniqueness for Theorem 2.2 in \cite%
{LeMi14} (which is done in a weighted $L^2$-norm) is essentially the same as
the main estimate used in the proof of the degenerate coercivity property of
the operators $\mathcal{L}$, $\mathcal{N}$, and $\mathcal{I}$ in %
\eqref{eq:SIDEintro}.

This chapter is organized as follows. We derive our existence and uniqueness
result for \eqref{eq:SESintro} in Section \ref%
{sec:DegenerateLinearStochasticEv} and for \eqref{eq:SIDEintro} in Section %
\ref{sec:DegenerateLinearSIDESobolev}.

\section{Degenerate linear stochastic evolution equations}

\label{sec:DegenerateLinearStochasticEv}

\subsection{Basic notation and definitions}

\label{subsec:NotandDefSES}

\label{sec:outlineofmainresultsevolution} Let $\mathbf{N}=\{1,2,\ldots,\}$
be the set of natural numbers, $\mathbf{R}$ be the set of real numbers, and $%
\mathbf{R}_{+} $ be the set of non-negative real numbers. All vector spaces
considered in this paper are assumed to have base field $\mathbf{R}$. We
also assume that all Hilbert spaces are separable. For a Hilbert space $H$,
we denote by $H^{\ast }$ the dual of $H$ and by $\mathcal{B}(H)$ the Borel
sigma-algebra of $H$. Unless otherwise stated, the norm and inner product of
a Hilbert space $H$ is denoted by $|\cdot |_{H}$ and $(\cdot ,\cdot )_{H}$,
respectively. For Hilbert spaces $H$ and $U$ and a bounded linear map $%
L:H\rightarrow U$, we denote by $L^{\ast}$ the Hilbert adjoint of $L$.
Whenever we say that a map $F$ from a sigma-finite measure space $(S,%
\mathcal{S},\mu )$ to a Hilbert space $H$ is $\mathcal{S}$-measurable
without specifying the sigma-algebra on $H$, we always mean that $F$ is $%
\mathcal{S}/\mathcal{B}(H)$-measurable. For any Hilbert space $H$ and
sigma-finite measure space $(S,\mathcal{S},\nu )$, we denote by $L^{2}(S,%
\mathcal{S},\mu ;H)$ the linear space of all $\mathcal{S} $-measurable
functions $F:S\rightarrow H$ such that 
\begin{equation*}
|F|_{L^{2}(S,\mathcal{S},\nu ;H)}=\int_{S}|F(s)|_{H}^{2}\nu (ds)<\infty ,
\end{equation*}%
where we identify functions $F,G:S\rightarrow H$ that are equal $\mu $%
-almost-everywhere ($\nu$-a.e.). The linear space $L^{2}(S,\mathcal{S},\nu
;H)$ is a Hilbert space when endowed with the inner product 
\begin{equation*}
(F,G)_{L^{2}(S,\mathcal{S},\nu ;H)}:=\int_{S}(F(s),G(s))_{H}\nu (ds).
\end{equation*}
We use the notation $N=N(\cdot ,\cdots ,\cdot )$ below to denote a positive
constant depending only on the quantities appearing in the parentheses. In a
given context, the same letter is often used to denote different constants
depending on the same parameter. All the stochastic processes considered
below are (at least) $\mathbf{F}$-adapted unless explicitly stated
otherwise. Furthermore, we will often drop the dependence on $\omega\in
\Omega$ for random quantities.

In this section, we consider a scale of Hilbert spaces $(H^{\alpha
})_{\alpha \in \mathbf{R}}$ and a family of operators $(\Lambda ^{\alpha
})_{\alpha \in \mathbf{R}}$ satisfying the following properties:

\begin{itemize}
\item for all $\alpha,\beta\in\mathbf{R}$ with $\beta>\alpha$, $H^{\beta }$
is densely embedded in $H^{\alpha }$;

\item for all $\alpha ,\beta ,\mu \in \mathbf{R}$ with $\alpha <\beta <\mu $
and all $\varepsilon >0$, there is a constant $N=N(\alpha ,\beta ,\mu
,\varepsilon )$ such that 
\begin{equation}
|v|_{\beta }\le \varepsilon |v|_{\mu }+N|v|_{\alpha },\;\;\forall v\in
H^{\mu };  \label{ineq:interpolationinequalityscale}
\end{equation}

\item $\Lambda ^{0}=I;$ for all $\alpha ,\mu \in \mathbf{R}$, $\Lambda
^{\alpha }:H^{\mu }\rightarrow H^{\mu -\alpha }$ is an isomorphism; for all $%
\alpha ,\beta \in \mathbf{R}$, $\Lambda ^{\alpha +\beta }=\Lambda ^{\alpha
}\Lambda ^{\beta }$;

\item for all $\alpha\in \mathbf{R}$, the inner product in $H^{\alpha }$ is
given by $\left( \cdot,\cdot \right) _{\alpha }=\left( \Lambda ^{\alpha
}\cdot, \Lambda ^{\alpha }\cdot\right) _{0}$;

\item for all $\alpha >0$, the dual $\left( H^{\alpha }\right) ^{\ast }$ can
be identified with $H^{-\alpha }$ through the duality product given by 
\begin{equation*}
\langle u,v\rangle _{\alpha }=\langle u,v\rangle _{H^{\alpha },H^{-\alpha
}}=\left( \Lambda ^{\alpha }u,\Lambda ^{-\alpha }v\right) _{0},\;\;u\in
H^{\alpha },\;v\in H^{-\alpha };
\end{equation*}

\item We assume that for every $\alpha \ge 0$, $\Lambda ^{\alpha }$ is
selfadjoint as an unbounded operator in $H^{0}$ with domain $H^{\alpha
}\subseteq H^{0}$: i.e. $\left( \Lambda ^{\alpha }u,v\right) _{0}=\left(
u,\Lambda ^{\alpha }v\right) _{0}$ for all $u,v\in H^{\alpha }.$
\end{itemize}

\begin{remark}
It follows from the above properties that for all $\alpha\in \mathbf{R}$,
the $H^{\alpha }$ norm is given by $\left| v\right| _{\alpha }=\left|
\Lambda ^{\alpha }v\right| _{0}$, $\Lambda ^{\alpha }$ is defined and linear
on $\cup _{\beta \in \mathbf{R}}H^{\beta }$, $\Lambda ^{-\alpha }=\left(
\Lambda ^{\alpha }\right) ^{-1}$, and $\Lambda ^{\alpha }\Lambda ^{\beta
}=\Lambda ^{\beta }\Lambda ^{\alpha }$, for all $\beta \in \mathbf{R.}$
Moreover, for each $\alpha\ge 0$, if $u\in H^{\alpha}$ and $v\in H^0$, then $%
\langle u,v\rangle _{\alpha }=(u,v)_0$.
\end{remark}

\indent We will now describe our driving cylindrical martingale $%
(M_{t})_{t\ge 0}$ in \eqref{eq:SESintro} ) and the associated stochastic
integral. For a more thorough exposition, we refer to \cite{MiRo99}. Let $E$
be a locally convex quasi-complete topological vector space; all bounded
closed subsets of $E$ are complete. Let $E^{\ast }$ be its topological dual.
Denote by $\langle \cdot ,\cdot \rangle_{E^{\ast},E}$ the canonical bilinear
form (duality product) on $E^{\ast}\times E.$ Assume that $E^{\ast }$ is
weakly separable. Denote by $\mathcal{L}^{+}(E)$ the space of symmetric
non-negative definite forms $Q$ from $E^{\ast }$ to $E$; that is, for all $%
Q\in \mathcal{L}^{+}(E),$ we have 
\begin{equation*}
\langle x,Qy\rangle_{E^{\ast},E} =\langle y,Qx \rangle_{E^{\ast},E},\text{
and }\langle x,Qx\rangle_{E^{\ast},E} \ge 0,\quad \forall x,y\in E^{\ast }.
\end{equation*}
Recall that $\mathcal{P}_{T}$ is the predictable sigma-algebra on $\Omega
\times [ 0,T]$. We say that a process $Q:\Omega\times [0,T] \rightarrow 
\mathcal{L}^{+}(E)$ is $\mathcal{P}_{T}$-measurable if $\langle
y,Q_tx\rangle_{E^{\ast},E}$ is $\mathcal{P}_{T}$-measurable for all $x,y\in
E^{\ast}.$

Assume that we are given a family of real-valued locally square integrable
martingales $M=(M_t{y})_{y\in E^{\ast}})_{t\ge 0}$ indexed by $E^{\ast}$ and
an increasing $\mathcal{P}_T$-measurable process $Q:\Omega\times [0,T]
\rightarrow \mathcal{L}^{+}(E)$ such that for all $x,y\in E^{\ast},$%
\begin{equation}
M_t(x)M_t(y)-\int_{0}^{t }\langle x,Q_{s}y\rangle_{E^{\ast},E} dV_{s},
\;\;t\ge 0,
\end{equation}
is a local martingale.

For each $(\omega,t)\in \Omega\times [0,T]$, let $\mathcal{H}_{t}=\mathcal{H}%
_{\omega,t}$ be the Hilbert subspace of $E$ defined as the completion of $%
Q_{\omega,t}E^{\ast}$ with respect to the inner product 
\begin{equation*}
\left( Q_{\omega,t}x,Q_{\omega,t}y\right) _{\mathcal{H}_{\omega,t}}:=\langle
x,Q_{\omega,t}y\rangle ,\;\;x,y\in E^{\ast}.
\end{equation*}
It can be shown that for all $(\omega,t)\in \Omega\times [0,T]$, $E^{\ast}$
is densely embedded into $\mathcal{H}_{t}^{\ast }$, the map $%
Q_{t}:E^{\ast}\rightarrow E$ can be extended to the Riesz isometry $Q_{t}:%
\mathcal{H}_{t}^{\ast }\rightarrow \mathcal{H}_{t}$ (still denoted $Q_{t}$),
and the bilinear form $\langle x,Q_{t}y\rangle_{E^{\ast},E}$, $x,y\in
E^{\ast},$ can be extended to $\langle x,Q_{t}y\rangle _{\mathcal{H}%
_{t}^{\ast },\mathcal{H}_{t}},\;x,y\in \mathcal{H}_{t}^{\ast }$. Note that
for all $x,y\in \mathcal{H}_{t}^{\ast } $, we have $\left( x,y\right) _{%
\mathcal{H}_{t}^{\ast }}=\langle x,Q_{t}y\rangle_{\mathcal{H}_{t}^{\ast },%
\mathcal{H}_{t}}. $

Let $\hat{L}_{loc}^{2}\left( Q\right) $ the space of all processes $f$ such
that $f_t\in \mathcal{H}_{t}^{\ast }$, $dV_td\mathbf{P}$-a.e., $\langle
f_{t},Q_{t}y\rangle _{\mathcal{H}_{t}^{\ast },\mathcal{H}_{t}}$ is $\mathcal{%
P}_{T}$-measurable for all $y\in E^{\ast}$, and $\mathbf{P}$-a.s. 
\begin{equation*}
\int_{0}^{T}| f_{t}| _{\mathcal{H}_{t}^{\ast
}}^{2}dV_{t}=\int_{0}^{T}\langle f_{t},Q_{t}f_{t}\rangle _{\mathcal{H}%
_{t}^{\ast },\mathcal{H}_{t}}dV_{t}<\infty .
\end{equation*}
In \cite{MiRo99}, the stochastic integral of $f\in \hat{L}_{loc}^{2}( Q)$
against $M$, denoted $\mathfrak{I}_{t}(f) =\int_{0}^{t}f_{s}dM_{s}$, $t\ge 0$%
, was constructed and has the following properties: $(\mathfrak{I}%
_t(f))_{t\ge0}$ is a locally square integrable martingale and $\mathbf{P}$%
-a.s.\ for all $t\in [0,T]$:

\begin{itemize}
\item for all $y\in E^{\ast}$, $\mathfrak{I}_t(y)
=\int_{0}^{t}ydM_{s}=M_{t}(y)$ (recall that $E^{\ast}$ is embedded into all $%
\mathcal{H}_{s}^{\ast }$);

\item for all $g\in \hat{L}_{loc}^{2}( Q)$. 
\begin{equation*}
\langle \mathfrak{I}(f) ,\mathfrak{I}(g) \rangle _{t}=\int_{0}^{t}\left(
f_{s},g_{s}\right) _{\mathcal{H}_{s}^{\ast }}dV_{s}=\int_{0}^{t}\langle
f_{s},Q_{s}g_{s}\rangle _{\mathcal{H}_{s}^{\ast },\mathcal{H}_{s}}dV_{s};
\end{equation*}

\item for all bounded $\mathcal{P}_{T}$-measurable processes $\phi:\Omega%
\times[0,T]\rightarrow\mathbf{R}$, 
\begin{equation*}
\int_{0}^{t}\phi _{s}d\mathfrak{I}_s(f) =\mathfrak{I}_t( \phi f)
=\int_{0}^{t}\phi _{s}f_{s}dM_{s}.
\end{equation*}
\end{itemize}

For a Hilbert space $H$ and $(\omega,t)\in \Omega\times [0,T]$, denote by $%
L_{2}(H,\mathcal{H}_{t}^{\ast })$ the space of all Hilbert-Schmidt operators 
$\Psi: H\rightarrow \mathcal{H}_{t}^{\ast }$ with norm and inner product
given by 
\begin{equation*}
| \Psi| _{L_{2}(H,\mathcal{H}_{t}^{\ast })}^{2}:=\sum_{n=1}^{\infty }|\Psi
h^{n}|_{\mathcal{H}_{t}^{\ast }}^{2}, \quad ( \Psi,\tilde{\Psi}) _{L_{2}(H,%
\mathcal{H}_{t}^{\ast })}= \sum_{n=1}^{\infty}( \Psi h^{n},\tilde{\Psi}%
h^{n}) _{\mathcal{H}_{t}^{\ast }},\;\; \tilde{\Psi}\in L_{2}(H,\mathcal{H}%
_{t}^{\ast }),
\end{equation*}
where $\left( h^{n}\right) _{n\in\mathbf{N}}$ is a complete orthogonal
system in $H$. Denote by $L_{loc}^{2}(H,Q)$ the space of all processes $\Psi$
such that $\Psi_{t}\in L_{2}( H,\mathcal{H}_{t}^{\ast }) $, $dV_td\mathbf{P}$%
-a.e., $\Psi_{t}h\in \hat{L}^{2}( Q) $, for each $h\in H,$, and $\mathbf{P}$%
-a.s. 
\begin{equation*}
\int_{0}^{T}| \Psi_{t}| _{L_{2}(H,\mathcal{H}_{t}^{\ast })}^{2}dV_{t}<\infty.
\end{equation*}%
For each $\Psi\in L_{loc}^{2}\left( H,Q\right) $, we define the stochastic
integral $\mathfrak{I}_{t}( \Psi) =\int_{0}^{t}\Psi_{s}dM_{s}$ as the unique 
$H$-valued c\`{a}dl\`{a}g locally square integrable martingale such that $%
\mathbf{P}$-a.s.\ for all $t\in [ 0,T]$ and $h\in H,$ 
\begin{equation*}
\left( \mathfrak{I}_t( \Psi) ,h\right) _{H}=\int_{0}^{t}\Psi_{s}hdM_{s}.
\end{equation*}%
For all $\Psi,\tilde{\Psi}\in L_{loc}^{2}\left(H,Q\right),$ we have that 
\begin{align*}
| \mathfrak{I}_{\cdot}\left( \Psi\right) | _{H}^{2}-\int_{0}^{\cdot}|
\Psi_{s}| _{L_{2}(H,\mathcal{H}_{s}^{\ast })}^{2}dV_{s}\;\; \mathnormal{and}
\;\; ( \mathfrak{I}_{\cdot}( \Psi) ,\mathfrak{I} _{\cdot}( \tilde{\Psi}) )
_{H}-\int_{0}^{\cdot}( \Psi_{s},\tilde{\Psi}_{s}) _{L_{2}(H, \mathcal{H}%
_{s}^{\ast })}dV_{s}
\end{align*}
are real-valued local martingales. Moreover, for all bounded $\mathcal{P}_T $%
-measurable $H$-valued processes $u:\Omega\times [0,T]\rightarrow H,$ $%
\mathbf{P}$-a.s.\ for all $t\in [ 0,T],$%
\begin{equation*}
\int_{0}^{t}u_{s}d\mathfrak{I}_s(\Psi)
=\int_{0}^{t}\{u_{s}\Psi_{s}\}_{H}dM_{s},
\end{equation*}%
where for a complete orthogonal system $\left( \tilde{e}_{s}^{n}\right)
_{n\in\mathbf{N}}$ in $\mathcal{H}_{s}^{\ast },$ 
\begin{equation*}
\{u_{s}\Psi_{s}\}_{H}:=\sum_{n=1}^{\infty}\left( \Psi_{s}u_{s},\tilde{e}%
_{s}^{n}\right) _{\mathcal{H}_{s}^{\ast }}\tilde{e}_{s}^{n}.
\end{equation*}
If $H$ and $Y$ are Hilbert spaces and $L : H\rightarrow Y$ is a bounded
linear operator and $\Psi\in L_{loc}^{2}( H,Q)$, then it follows that $L%
\mathfrak{I}_t(\Psi)= \mathfrak{I}_t(\Psi L^{\ast});$ indeed, for all $y\in
Y $, we have 
\begin{equation*}
(L\mathfrak{I}_t(\Psi),y)_Y=(\mathfrak{I}_t(\Psi),L^*y)_H=\int_0^t\Psi_sL^{%
\ast}ydM_s
\end{equation*}
and $\Psi L^{\ast}\in L_{loc}^{2}( Y,Q)$.

\subsection{Main results}

In this section, for $\mu \in (0,1],$ we consider the linear stochastic
evolution equation in the triple $(H^{-\mu },H^{0},H^{\mu })$ given by 
\begin{align}
du_{t}& =\left( \mathcal{L}_{t}u_{t}+f_{t}\right) dV_{t}+\left( \mathcal{M}%
_{t}u_{t-}+g_{t}\right) dM_{t},\;\;t\leq T,  \label{eq:SES} \\
u_{0}& =\varphi ,  \notag
\end{align}%
where $\varphi $ is an $\mathcal{F}_{0}$-measurable $H^{0}$-valued random
variable and $V_{t}$ is a continuous non-decreasing process such that $%
V_{t}\leq C$ for all $(\omega ,t)\in \Omega \times \lbrack 0,T],$ for some
positive constant $C$. Let $\alpha \geq \mu $ be given. We assume that:

\begin{enumerate}
\item the mapping $\mathcal{L}:\Omega \times [ 0,T]\times H^{\mu
}\rightarrow H^{-\mu }$ is linear in $H^{\mu },$ and for all $v\in H^{\mu }$%
, $\mathcal{L} v$ is $\mathcal{R}_{T}/\mathcal{B}(H^{-\mu })$-measurable; in
addition, $dV_{t}d\mathbf{P}$-a.e., $\mathcal{L}_{t}v\in H^{\alpha -\mu }$
for all $v\in H^{\alpha +\mu }$;

\item for $dV_{t}d\mathbf{P}$-almost-all $(\omega ,t)\in \Omega \times [
0,T] $, $\mathcal{M}_{\omega ,t}:H^{\mu }\rightarrow L_{2}(H^{0},\mathcal{H}%
_{\omega ,t}^{\ast })$ is linear, and for all $v\in H^{\mu }$, $\phi \in
H^{0}$, $y^{\prime }\in E^{\ast }$, $\langle (\mathcal{M}v)\phi
,Q_{t}y^{\prime }\rangle _{\mathcal{H}_{t}^{\ast },\mathcal{H}_{t}}$ is $%
\mathcal{P}_{T}$-measurable for all $y^{\prime }\in E^{\ast }$; in addition, 
$dV_{t}d\mathbf{P}$-a.e, $\mathcal{M}_{t}v\in L_{2}(H^{\alpha },\mathcal{H}%
_{t}^{\ast })$ for all $v\in H^{\alpha +\mu }.$

\item the process $f:\Omega \times [ 0,T]\rightarrow H^{\alpha -\mu }$ is $%
\mathcal{R}_{T}/\mathcal{B}(H^{\alpha -\mu })$-measurable and $g\in
L_{loc}^{2}\left( H^{\alpha },Q\right) \cap L_{loc}^{2}( H^{0},Q) $,
\end{enumerate}

Let us introduce the following assumption for $\lambda \in \{0,\alpha\}$.
Recall that $\left( u,v\right) _{\lambda }=\left( \Lambda ^{\lambda
}u,\Lambda ^{\lambda }v\right) _{0}$.

\begin{assumption}[$\protect\lambda ,\protect\mu $]
\label{asm:mainSES} There are positive constants $L$ and $K$ and an $%
\mathcal{R}_{T}$-measurable function $\bar{f}:\Omega \times [
0,T]\rightarrow \mathbf{R}$ such that the following conditions hold $dV_{t}d%
\mathbf{P}$-a.e.:

\begin{enumerate}
\item for all $v\in H^{\lambda +\mu }$, 
\begin{align*}
2(\Lambda ^{\mu }v,\Lambda ^{-\mu }\mathcal{L}_{t}v)_{\lambda }+|\mathcal{M}%
_{t}v|_{L_{2}(H^{\lambda },\mathcal{H}_{t}^{\ast })}^{2}& \leq L|v|_{\lambda
}^{2}; \\
2(\Lambda ^{\mu }v,\Lambda _{t}^{-\mu }\mathcal{L}_{t}v+f_{t})_{\lambda }+|%
\mathcal{M}_{t}v+g_{t}|_{L_{2}(H^{\lambda },\mathcal{H}_{t}^{\ast })}^{2}&
\leq L|v|_{\lambda }^{2}+\bar{f}_{t};
\end{align*}

\item for all $v\in H^{\lambda +\mu }$, 
\begin{gather*}
|\mathcal{L}_{t}v|_{\lambda -\mu } \le K|v|_{\lambda+\mu},\quad |\mathcal{M}%
_{t}v|_{L_{2}(H^ {\lambda },\mathcal{H}_{t}^{\ast})}\le K|v|_{\lambda+\mu};
\end{gather*}

\item 
\begin{equation*}
|f_{t}|_{\lambda-\mu}^{2}+|g_{t}|_{L_{2}(H^{\lambda },\mathcal{H}^{\ast}
_{t})}^{2} \le \bar{f}_{t},\quad \mathbf{E}\int_{0}^{T}\bar{f}%
_{t}dV_{t}<\infty.
\end{equation*}
\end{enumerate}
\end{assumption}

Let $\mathcal{O}_{T}$ be the optional sigma-algebra on $\Omega \times [ 0,T]$%
. For $\mu \in (0,1])$ and $\lambda \in \mathbf{R}_{+}$ with $\lambda\in \{
0,\alpha \}$, we denote by $\mathcal{W}^{\lambda ,\mu }$ the space of all $%
H^{\lambda }$-valued strongly c\`{a}dl\`{a}g processes $v:\Omega \times [
0,T]\rightarrow H^{\lambda }$ that belong to $L^{2}(\Omega \times [ 0,T],%
\mathcal{O}_{T},dV_{t}d\mathbf{P};H^{\lambda +\mu }).$ The following is our
definition of the solution of \eqref{eq:SES} and is standard in the
variational theory or $L^{2}$-theory of stochastic evolution equations.

\begin{definition}
\label{def:solution}A process $u\in \mathcal{W}^{0,\mu }$ is said to be a
solution of the stochastic evolution equation (\ref{eq:SES}) if $\mathbf{P}$%
-a.s.\ for all $t\in [ 0,T]$%
\begin{equation*}
u_{t}\overset{H^{-\mu}}{=}u_{0}+\int_{0}^{t}(\mathcal{L}%
_{s}u_{s}+f_{s})dV_{s}+\int_{0}^{t}(\mathcal{M}_{s}u_{s-}+g_{s})dM_{s},
\end{equation*}%
where $\overset{H^{-\mu}}{=}$ indicates that the equality holds in the $%
H^{-\mu}$. That is, $\mathbf{P}$-a.s.\ for all $t\in [ 0,T]$ and $v\in
H^{\mu }$, 
\begin{equation*}
(v,u_{t})_{0}=(v,u_{0})+\int_{0}^{t}\langle v,\mathcal{L}_{s}u_{s}+f_{s}%
\rangle _{\mu }dV_{s}+\int_{0}^{t}\{v(\mathcal{M}_{s}u_{s-}+g_{s})%
\}_{H^{0}}dM_{s}.  \label{eq1}
\end{equation*}
\end{definition}

\begin{remark}
In Definition \ref{def:solution}, it is implicitly assumed that the
integrals in (\ref{eq1}) are well-defined. Moreover, it is easy to check
that if Assumption \ref{asm:mainSES}$\left( 0,\mu \right) $ holds, then the
integrals in (\ref{eq1}) are well-defined.
\end{remark}

In order to obtain estimates of the second moments of the supremum in $t$ of
the solution of \eqref{eq:SES}, in the $H^{\alpha}$ norm, we will need to
impose the upcoming assumption. Before introducing this assumption, we
describe a few notational conventions. For two real-valued semimartingales $%
X_{t}$ and $Y_{t}$, we write $\mathbf{P}$-a.s. $dX_{t}\le dY_{t}$ if with
probability 1, $X_{t}-X_{s}\le Y_{t}-Y_{s}$ for any $0\le s\le t\le T$. For $%
v\in \mathcal{W}^{\lambda ,\mu },$ we define 
\begin{equation*}
\mathfrak{M}_{t}\left( v\right) :=\int_{0}^{t}\mathcal{M}_{s}v_{s}dM_{s},\;%
\;t\in [ 0,T],
\end{equation*}
and denote by $\left[ \mathfrak{M}(v)\right] _{\lambda ;t}$ the quadratic
variation process of $\mathfrak{M}_{t}\left( v\right) $ in $H^{\lambda }.$

\begin{assumption}[$\protect\lambda ,\protect\mu$]
\label{asm:specSES} There is a positive constant $L$, a $\mathcal{P}_{T}$%
-measurable function $\bar{g}:\Omega \times [ 0,T]\rightarrow \mathbf{R},$
and an increasing adapted processes $A,B:\Omega\times [0,T]\rightarrow 
\mathbf{R}$ with $dA_{t}d\mathbf{P\le }LdV_{t}d\mathbf{P}$, $dB_{t}d\mathbf{%
P\le }\bar{g}_{t}dV_{t}d\mathbf{P}$ on $\mathcal{P}_{T}$ such that the
following conditions hold $\mathbf{P}$-a.s.:

\begin{enumerate}
\item for all $v\in \mathcal{W}^{\lambda ,\mu }$, 
\begin{equation*}
(\Lambda^{\mu}v_{t},\Lambda^{-\mu}\mathcal{L}_{t}v_{t})_{\lambda}dV_t+d\left[
\mathfrak{M}(v)\right] _{\lambda ;t}+2\{v_{t-}\mathcal{M}_{t}v_{t-}\}_{H^{%
\lambda }}dM_{t} \newline
\le | v_{t-}| _{\lambda }^{2}dA_{t}+G_{t}(v)dM_{t},
\end{equation*}
where $G(v)\in \hat{L}_{loc}^{2}\left( Q\right) $ satisfies $| G_{t}(v)| _{%
\mathcal{H}_{t}^{\ast }}dV_t\le L| v_{t-}| _{\lambda }^{2}dV_t;$

\item for all $v\in \mathcal{W}^{\lambda ,\mu }$, 
\begin{equation*}
2d\left[ \mathfrak{M}\left( v\right) ,\mathcal{I}\left( g\right) \right]
_{\lambda ,t}+2\{v_{t-}g_{t}\}_{H^{\lambda }}dM_{t} \le | v_{t-}| _{\lambda
}dB_{t}+\tilde{G}_{t}\left( v\right) dM_{t},
\end{equation*}
where $\bar{G}(v)\in \hat{L}_{loc}^{2}\left( Q\right) $ satisfies $| \bar{G}%
_{t}(v)| _{\mathcal{H}_{t}^{\ast }}dV_t\le L| v_{t-}| _{\lambda }\bar{g}%
_{t}dV_t$, and 
\begin{equation*}
\mathbf{E}\int_{0}^{T}\bar{g}_{t}^{2}dV_{t}<\infty .
\end{equation*}
\end{enumerate}
\end{assumption}

Although Assumption \ref{asm:specSES}$\left( \lambda ,\mu \right) $ looks
rather technical, it is satisfied for a large class of parabolic stochastic
integro-differential equations (see Section \ref%
{sec:DegenerateLinearSIDESobolev}) under what we consider to be reasonable
assumptions.

Let $\mathcal{T}$ be the set of all stopping times $\tau \le T$ and $%
\mathcal{T}^p$ be the set of all predictable stopping times $\tau\le T$.

\begin{theorem}
\label{th:Degenerate} Let $\mu\in (0,1]$ and $\alpha\ge \mu$. Let Assumption %
\ref{asm:mainSES}$\left( \lambda ,\mu \right) $ hold for $\lambda\in
\{0,\alpha \}$ and assume that $\mathbf{E}\left[|\varphi |_{\alpha }^{2}%
\right]<\infty$.

\begin{enumerate}
\item Then there exists a unique solution $u=(u_t)_{t\le T}$ of %
\eqref{eq:SES} such that for any $\alpha ^{\prime }<\alpha$, $u$ is an $%
H^{\alpha ^{\prime }}$-valued strongly c\`{a}dl\`{a}g process and there is a
constant $N=N(L,K,C)$ such that 
\begin{equation*}
\mathbf{E}\left[ \underset{t\le T}{\sup }\left|u_{t}\right|_{\alpha -\mu
}^{2}\right] +\underset{\tau \in \mathcal{T}}{\sup }\mathbf{E}\left[
|u_{\tau }|_{\alpha }^{2}\right] +\mathbf{E}\int_{0}^{T}\left|u_{s}\right|_{%
\alpha }^{2}dV_{s}\le N \left(\mathbf{E}\left[|\varphi |_{\alpha }^{2}\right]%
+\mathbf{E}\int_0^T\bar{f}_{t}dV_{t} \right).
\end{equation*}
Moreover for each $p\in (0,2)$ and $\alpha ^{\prime }<\alpha ,$ there is a
constant $N=N(L,K,C,p,\alpha^{\prime })$ such that 
\begin{equation*}
\mathbf{E}\left[ \underset{t\le T}{\sup }\left|u_{t}\right|_{\alpha ^{\prime
}}^{p}\right] \le N\mathbf{E}\left[\left( |\varphi |_{\alpha }^{2}+\int_0^T%
\bar{f}_{t}dV_{t}\right) ^{\frac{p}{2}}\right].
\end{equation*}

\item If, in addition, Assumption \ref{asm:specSES}$\left( \lambda ,\mu
\right) $ holds for $\lambda\in \{0,\alpha\} $, then $u$ is an $H^{\alpha }$%
-valued weakly c\`{a}dl\`{a}g process and there is a constant $N=N(L,K,C)$
such that 
\begin{equation*}
\mathbf{E}\left[ \sup_{t\le T}|u_{t}|_{\alpha }^{2}\right] \le N\mathbf{E}%
\left[ |\varphi |_{\alpha }^{2}+\int_0^T(\bar{f}_{t}+\bar{g}_{t}^{2})dV_{t}%
\right].
\end{equation*}
\end{enumerate}
\end{theorem}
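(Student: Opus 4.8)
The plan is to use the method of vanishing viscosity: regularize \eqref{eq:SES} into a coercive equation solvable by the cited results of \cite{Gy82}, derive a priori bounds uniform in the viscosity parameter, and pass to the limit. For $\varepsilon>0$ set $\mathcal{L}^{\varepsilon}_t:=\mathcal{L}_t-\varepsilon\Lambda^{2\mu}$ and consider
\begin{equation*}
du^{\varepsilon}_t=(\mathcal{L}^{\varepsilon}_tu^{\varepsilon}_t+f_t)\,dV_t+(\mathcal{M}_tu^{\varepsilon}_{t-}+g_t)\,dM_t,\qquad u^{\varepsilon}_0=\varphi .
\end{equation*}
Since $\Lambda^{-\mu}\Lambda^{2\mu}=\Lambda^{\mu}$ and $(u,v)_\lambda=(\Lambda^\lambda u,\Lambda^\lambda v)_0$, the viscosity term contributes $-2\varepsilon|v|^2_{\lambda+\mu}$ to the form $2(\Lambda^{\mu}v,\Lambda^{-\mu}\mathcal{L}^{\varepsilon}_tv)_\lambda+|\mathcal{M}_tv|^2_{L_2(H^\lambda,\mathcal H_t^{\ast})}$, so Assumption \ref{asm:mainSES}$(\lambda,\mu)$(i) upgrades to the strict coercivity $\le-2\varepsilon|v|^2_{\lambda+\mu}+L|v|^2_\lambda$, while the growth bound in Assumption \ref{asm:mainSES}$(\lambda,\mu)$(ii) survives with constant $K+\varepsilon$. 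Hence for each $\varepsilon$ the regularized equation has, by Theorems~2.9 and~2.10 of \cite{Gy82}, a unique solution $u^{\varepsilon}$ that is strongly c\`adl\`ag in $H^{\alpha}$ and lies in $L^{2}(\Omega\times[0,T],\mathcal O_T,dV_td\mathbf P;H^{\alpha+\mu})$.

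Next I would apply the It\^o formula for the square of the $H^{\lambda}$-norm in the Gelfand triple to $u^{\varepsilon}$ at the two levels $\lambda=0$ and $\lambda=\alpha$, taking expectations, using Assumption \ref{asm:mainSES}$(\lambda,\mu)$(i) to absorb the drift together with the martingale bracket, and closing with Gronwall (recall that $V$ is bounded). Because the extra dissipation enters with a favorable sign, this yields bounds independent of $\varepsilon$:
\begin{equation*}
\sup_{\tau\in\mathcal T}\mathbf E|u^{\varepsilon}_\tau|_\alpha^2+\mathbf E\!\int_0^T|u^{\varepsilon}_s|_\alpha^2\,dV_s+\varepsilon\,\mathbf E\!\int_0^T|u^{\varepsilon}_s|_{\alpha+\mu}^2\,dV_s\le N\Big(\mathbf E|\varphi|_\alpha^2+\mathbf E\!\int_0^T\bar f_t\,dV_t\Big).
\end{equation*}
The uniform bound on $\varepsilon\,\mathbf E\int|u^{\varepsilon}|_{\alpha+\mu}^2\,dV$ is what forces the viscosity term to disappear in the weak formulation: for a test function $v\in H^{\mu}$ one has $\varepsilon\,\mathbf E\int_0^T|\langle v,\Lambda^{2\mu}u^{\varepsilon}_s\rangle_\mu|\,dV_s\le\varepsilon|v|_\mu\,\mathbf E\int_0^T|u^{\varepsilon}_s|_{\alpha+\mu}\,dV_s=O(\varepsilon^{1/2})$. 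By reflexivity of $L^{2}(\Omega\times[0,T];H^{\alpha})$ a subsequence of $(u^{\varepsilon})$ converges weakly to some $u$; passing to the limit in \eqref{eq1} for the regularized equation and discarding the viscosity contribution identifies $u\in\mathcal W^{0,\mu}$ as a solution of \eqref{eq:SES}, and the a priori bounds pass to $u$ (the integral bound by weak lower semicontinuity, the $\sup_\tau\mathbf E$ bound by optional sampling in the limit equation).

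Uniqueness is the standard energy argument: the difference $w$ of two solutions lies in the triple $(H^{\mu},H^{0},H^{-\mu})$, so It\^o's formula gives $\mathbf E|w_t|_0^2=\mathbf E\int_0^t(2\langle w_s,\mathcal L_sw_s\rangle_\mu+|\mathcal M_sw_s|^2_{L_2(H^0,\mathcal H_s^{\ast})})\,dV_s\le L\,\mathbf E\int_0^t|w_s|_0^2\,dV_s$ by Assumption \ref{asm:mainSES}$(0,\mu)$(i), whence $w\equiv0$ by Gronwall. For the regularity I would interpolate the mapping bounds for $\mathcal L$ and $\mathcal M$ between $\lambda=0$ and $\lambda=\alpha$, which realizes $u$ as a process whose drift is absolutely continuous and whose stochastic integral is c\`adl\`ag in a space $H^{\beta}$ with $\beta<\alpha$, so that $u$ is strongly c\`adl\`ag in $H^{\beta}$ with $\mathbf E\sup_t|u_t|_{\alpha-\mu}^2<\infty$ from a Burkholder--Davis--Gundy estimate at the lower level; combining strong c\`adl\`ag-ness in $H^{\beta}$ with the $H^{\alpha}$ bounds through the interpolation inequality \eqref{ineq:interpolationinequalityscale}, writing $|u_t-u_s|_{\alpha'}\le\eta|u_t-u_s|_\alpha+N_\eta|u_t-u_s|_\beta$ and letting $\eta\to0$, upgrades the c\`adl\`ag property to $H^{\alpha'}$ for every $\alpha'<\alpha$.

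Finally, for part \textit{(ii)} I would keep the martingale in the It\^o formula at level $\lambda=\alpha$ and invoke Assumption \ref{asm:specSES}$(\alpha,\mu)$ to replace the bracket $d[\mathfrak M(u)]_{\alpha}$ and the cross bracket $[\mathfrak M(u),\mathcal I(g)]_{\alpha}$ by $|u_{t-}|_\alpha^2\,dA_t+dB_t$ plus a genuine stochastic differential $(G_t(u)+\tilde G_t(u))\,dM_t$ whose predictable bracket is controlled by $L|u_{t-}|_\alpha^2\,dV_t+L|u_{t-}|_\alpha\bar g_t\,dV_t$; a Burkholder--Davis--Gundy estimate on this martingale, absorbed via Young's inequality into $\tfrac12\mathbf E\sup_t|u_t|_\alpha^2$, gives $\mathbf E[\sup_{t\le T}|u_t|_\alpha^2]\le N\,\mathbf E[|\varphi|_\alpha^2+\int_0^T(\bar f_t+\bar g_t^2)\,dV_t]$, and the resulting a.s.\ finiteness of $\sup_t|u_t|_\alpha$ together with the strong c\`adl\`ag property in $H^{\alpha'}$ yields weak c\`adl\`ag-ness in $H^{\alpha}$. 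The main obstacle throughout is the absence of coercivity: in the limit there is no control of $\int|u|_{\alpha+\mu}^2\,dV$, so at level $\alpha$ in part \textit{(i)} the supremum cannot be moved inside the expectation (only $\sup_\tau\mathbf E$ survives) and naive Cauchy estimates in $\varepsilon$ fail because the viscosity cross-terms need not be small; it is precisely the structural decomposition of Assumption \ref{asm:specSES} that makes the level-$\alpha$ Burkholder--Davis--Gundy estimate close in part \textit{(ii)}, and carrying out the interpolation upgrade of the c\`adl\`ag regularity without the energy space $H^{\alpha+\mu}$ is where the real work concentrates.
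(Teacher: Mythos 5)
Your overall skeleton---viscosity regularization $\mathcal{L}-\varepsilon\Lambda^{2\mu}$, solvability of the regularized equation via Theorems 2.9--2.10 of \cite{Gy82}, and a priori bounds uniform in the viscosity parameter---is the same as the paper's (the paper conjugates by $\Lambda^{\alpha}$ and works with $v^{n}=\Lambda^{\alpha}u^{n}$ in the pivot triple $(H^{-\mu},H^{0},H^{\mu})$, which is equivalent to your direct application at level $\alpha$, with the side benefit that level-$\alpha$ norms of $u^{n}$ become level-$0$ norms of $v^{n}$). The decisive divergence, and the main error, is your closing assertion that ``naive Cauchy estimates in $\varepsilon$ fail because the viscosity cross-terms need not be small.'' This is exactly backwards: the Cauchy estimate is the heart of the paper's proof, and it works. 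Applying the It\^o formula of \cite{GyKr81} to $u^{n}-u^{m}$ at level $0$ and using Assumption \ref{asm:mainSES}$(0,\mu)$\textit{(i)}, the viscosity cross-terms are bounded by $N\left(\tfrac{1}{n}+\tfrac{1}{m}\right)\left(|u^{n}_{s}|^{2}_{\mu}+|u^{m}_{s}|^{2}_{\mu}\right)$, and since $\mu\le\alpha$ one has $|\cdot|_{\mu}\le N|\cdot|_{\alpha}$, so the uniform energy bound $\mathbf{E}\int_{0}^{T}|u^{n}_{s}|^{2}_{\alpha}\,dV_{s}\le N$ forces these terms to vanish as $n,m\to\infty$. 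This gives $\sup_{\tau\in\mathcal{T}}\mathbf{E}\left[|u^{n}_{\tau}-u^{m}_{\tau}|^{2}_{0}\right]\to 0$ and, via Lenglart's domination inequality (Corollary II of \cite{Le77}), $\mathbf{E}\sup_{t\le T}|u^{n}_{t}-u^{m}_{t}|^{p}_{0}\to0$ for $p<2$; interpolating against the uniform level-$\alpha$ $p$-moment bounds then produces a limit that is strongly c\`adl\`ag in every $H^{\alpha'}$, $\alpha'<\alpha$.

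By replacing this strong convergence with weak compactness you lose all pointwise-in-time control of the limit, and several parts of the statement become unreachable as written. First, weak convergence in $L^{2}(\Omega\times[0,T];H^{\alpha})$ produces $u$ only as an equivalence class; your claim that the limit lies in $\mathcal{W}^{0,\mu}$ (strongly c\`adl\`ag in $H^{0}$) does not follow from it and would at least require the c\`adl\`ag-modification theorem of \cite{GyKr81}, which you never invoke. Second, transferring the bound $\sup_{\tau\in\mathcal{T}}\mathbf{E}\left[|u_{\tau}|^{2}_{\alpha}\right]$ ``by optional sampling in the limit equation'' is not possible: the limit equation cannot be tested or It\^o'd at level $\alpha$ precisely because $u\notin H^{\alpha+\mu}$ (this is the degeneracy); the paper instead obtains this bound from a.s.\ pointwise convergence $u^{n}_{\tau}\to u_{\tau}$ in $H^{0}$, Fatou's lemma, and an orthonormal basis of $H^{\alpha}$. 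Third, the estimate $\mathbf{E}\sup_{t\le T}|u_{t}|^{p}_{\alpha'}\le N\mathbf{E}\left[\left(|\varphi|^{2}_{\alpha}+\int_{0}^{T}\bar f_{t}\,dV_{t}\right)^{p/2}\right]$, which is part of the theorem, is never addressed in your proposal; it cannot be recovered from the $\sup_{\tau}\mathbf{E}$-bound alone (that route only yields $\left(\mathbf{E}[\cdots]\right)^{p/2}$ by Jensen) and requires Lenglart domination applied to the approximations together with their strong convergence. Fourth, in part \textit{(ii)} you invoke Assumption \ref{asm:specSES}$(\alpha,\mu)$ and the brackets $\left[\mathfrak{M}(u)\right]_{\alpha;t}$ for the limit $u$; these objects are undefined, since the assumption is formulated for $v\in\mathcal{W}^{\alpha,\mu}$ and $\mathcal{M}_{t}u_{t}\notin L_{2}(H^{\alpha},\mathcal{H}^{\ast}_{t})$ in general. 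It must be applied to the viscous approximations (and, for the weak-c\`adl\`ag property, Assumption \ref{asm:specSES}$(0,\mu)$ to the differences $u^{n}-u^{m}$), after which the bounds pass to the limit by the same basis/Fatou argument. Your final implication---strong c\`adl\`ag in $H^{\alpha'}$ plus a.s.\ finiteness of $\sup_{t}|u_{t}|_{\alpha}$ gives weak c\`adl\`ag in $H^{\alpha}$---is correct, but its hypotheses are exactly what your weak-limit route fails to supply.
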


\begin{remark}
If $V$ is an arbitrary continuous increasing adapted process, then Theorem %
\ref{th:Degenerate} can be applied locally by considering $%
V_{t}^{C}=V_{t\wedge \tau _{C}},t\in [ 0,T],$ with $\tau _{C}=\inf \left( t%
\in[0,T]:V_{t}\ge C\right) \wedge T.$
\end{remark}

\subsection{Proof of Theorem \protect\ref{th:Degenerate}}

We will construct a sequence of approximations in $\mathcal{W}_{T}^{\alpha
,\mu } $ of the solution of \eqref{eq:SES} by solving in the triple $(
H^{-\mu },H^0,H^{\mu }) $ the equation%
\begin{align}
du_{t}& =\left( \mathcal{L}_{t}^{n}u_{t}+f_{t}\right) dV_{t}+\left( \mathcal{%
M}_{t}u_{t-}+g_{t}\right) dM_{t},\;\;t\le T,  \label{eq:SESvan} \\
u_{0}& =\varphi ,  \notag
\end{align}%
where $\mathcal{L}_{t}^{n}=\mathcal{L}_{t}-\frac{1}{n}(\Lambda ^{\mu })^{2}.$
In order to apply the foundational theorems on stochastic evolution
equations with jumps established in \cite{GyKr81} and \cite{Gy82}, it is
convenient for us first to consider the following equation in the triple $(
H^{-\mu },H^0,H^{\mu }) :$%
\begin{align}
dv_{t}& =\left( \Lambda ^{\alpha }\mathcal{L}_{t}\Lambda ^{-\alpha }v_{t}-%
\frac{1}{n}(\Lambda ^{\mu })^{2}v_{t}+\Lambda ^{\alpha }f_{t}\right)
dV_{t}+\left(\mathcal{M}_{t}\Lambda ^{-\alpha}v_{t-}(\Lambda
^{\alpha})^{\ast} +g_{t}(\Lambda^{\alpha})^{\ast}\right) dM_{t},\;\;t\le T,
\label{eq:SESvanlam} \\
v_{0}& =\Lambda ^{\alpha }\varphi .  \notag
\end{align}
The solutions of \eqref{eq:SESvan} and \eqref{eq:SESvanlam} are to be
understood following Definition \ref{def:solution}.

\begin{lemma}
\label{lem:aprioribound} Let $\mu\in (0,1]$ and $\alpha\ge \mu$. Let
Assumption \ref{asm:mainSES}$\left( \alpha ,\mu \right) $ hold and assume
that $\mathbf{E}\left[|\varphi |_{\alpha }^{2}\right]<\infty$.

\begin{enumerate}
\item For each $n\in \mathbf{N}$, there is a unique solution $%
v^{n}=(v_{t}^{n})_{t\le T}$ of \eqref{eq:SESvan}, and there is a constant $%
N=N(L,K,C)$ independent of $n$ such that 
\begin{equation}  \label{ineq:SupExpEst}
\underset{\tau \in \mathcal{T}}{\sup }\mathbf{E}\left[ |v_{\tau
}^{n}|_{0}^{2}\right] +\mathbf{E}\int_{0}^{T}|v_{t}^{n}|_{0}^{2}dV_{t}+\frac{%
1}{n}\mathbf{E}\int_{0}^{T}|v_{t}^{n}|_{\mu }^{2}dV_{t}\le N\mathbf{E}\left[
|\varphi |_{\alpha }^{2}+\int_{0}^{T}\bar{f}_{t}dV_{t}\right] .
\end{equation}%
Moreover, for each $p\in (0,2)$, there is a constant $N=N(L,K,T,p)$ 
\begin{equation}
\mathbf{E}\left[ \sup_{t\le T}|v_{t}^{n}|_{0}^{p}\right] \le N\mathbf{E}%
\left[ \left( |\varphi |_{\alpha }^{2}+\int_{0}^{T}\bar{f}_{t}dV_{t}\right)
^{\frac{p}{2}}\right] .  \label{ineq:paprest}
\end{equation}

\item If, in addition, Assumption \ref{asm:specSES}$\left( \alpha ,\mu
\right) $ holds, then there is a constant $N=N(L,K,C)$ such that%
\begin{equation}  \label{ineq:esupaprest}
\mathbf{E}\left[ \sup_{t\le T}|v_{t}^{n}|_{0}^{2}\right] \le N\mathbf{E}%
\left[ |\varphi |_{\alpha }^{2}+\int_0^T(\bar{f}_{t}+\bar{g}_{t}^{2})dV_{t}%
\right] .
\end{equation}
\end{enumerate}
\end{lemma}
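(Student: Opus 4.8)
The plan is to solve the regularized, $\Lambda^{\alpha}$-conjugated equation \eqref{eq:SESvanlam} in the triple $(H^{-\mu},H^{0},H^{\mu})$ by invoking the existence and uniqueness theorems for coercive stochastic evolution equations with jumps from \cite{GyKr81} and \cite{Gy82}, and then to read off the assertions about \eqref{eq:SESvan} through the isometry $u^{n}=\Lambda^{-\alpha}v^{n}$, which makes $|v^{n}_{t}|_{0}=|u^{n}_{t}|_{\alpha}$ and $|v^{n}_{0}|_{0}=|\Lambda^{\alpha}\varphi|_{0}=|\varphi|_{\alpha}$ (this is what produces the right-hand sides). The first and main step is to check that the conjugated drift $\tilde{\mathcal{L}}^{n}_{t}:=\Lambda^{\alpha}\mathcal{L}_{t}\Lambda^{-\alpha}-\tfrac{1}{n}(\Lambda^{\mu})^{2}$ and diffusion $\tilde{\mathcal{M}}_{t}:=\mathcal{M}_{t}\Lambda^{-\alpha}(\Lambda^{\alpha})^{\ast}$ satisfy the coercivity and growth hypotheses of those theorems in $(H^{-\mu},H^{0},H^{\mu})$. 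The key is that for $w=\Lambda^{-\alpha}v$ one has the algebraic identities $\langle v,\Lambda^{\alpha}\mathcal{L}_{t}\Lambda^{-\alpha}v\rangle_{\mu}=(\Lambda^{\mu}w,\Lambda^{-\mu}\mathcal{L}_{t}w)_{\alpha}$ and $|\tilde{\mathcal{M}}_{t}v|_{L_{2}(H^{0},\mathcal{H}_{t}^{\ast})}=|\mathcal{M}_{t}w|_{L_{2}(H^{\alpha},\mathcal{H}_{t}^{\ast})}$ (the latter because $(\Lambda^{\alpha})^{\ast}\colon H^{0}\to H^{\alpha}$ is unitary, so precomposition preserves the Hilbert--Schmidt norm). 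Hence Assumption \ref{asm:mainSES}$(\alpha,\mu)$ transports verbatim into the $H^{0}$-level estimate $2\langle v,\tilde{\mathcal{L}}^{n}_{t}v\rangle_{\mu}+|\tilde{\mathcal{M}}_{t}v|^{2}_{L_{2}(H^{0},\mathcal{H}_{t}^{\ast})}\le -\tfrac{2}{n}|v|_{\mu}^{2}+L|v|_{0}^{2}$, the viscosity term supplying the strict (for each fixed $n$) coercivity $-\tfrac{2}{n}|v|_{\mu}^{2}$; growth and the integrability of the free terms follow the same way from parts (ii)--(iii).

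Granting solvability, I would obtain \eqref{ineq:SupExpEst} from the It\^o formula for $|v^{n}_{t}|_{0}^{2}$ furnished by the variational theory. Inserting the transformed second inequality of Assumption \ref{asm:mainSES}$(\alpha,\mu)$(i) yields, up to a local martingale,
\[
|v^{n}_{t}|_{0}^{2}+\tfrac{2}{n}\int_{0}^{t}|v^{n}_{s}|_{\mu}^{2}\,dV_{s}\le |\varphi|_{\alpha}^{2}+\int_{0}^{t}\big(L|v^{n}_{s}|_{0}^{2}+\bar{f}_{s}\big)\,dV_{s}.
\]
Localizing, taking expectations, and applying Gronwall's lemma in the integrator $V$ (legitimate since $V_{t}\le C$) gives $\sup_{\tau\in\mathcal{T}}\mathbf{E}|v^{n}_{\tau}|_{0}^{2}+\tfrac{1}{n}\mathbf{E}\int_{0}^{T}|v^{n}_{t}|_{\mu}^{2}\,dV_{t}\le N\,\mathbf{E}[|\varphi|_{\alpha}^{2}+\int_{0}^{T}\bar{f}_{t}\,dV_{t}]$ with $N=N(L,K,C)$ manifestly independent of $n$, because the viscosity enters only with a favourable sign and the coercivity bound $L|v|_{0}^{2}+\bar{f}$ is $n$-uniform. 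The remaining term $\mathbf{E}\int_{0}^{T}|v^{n}_{t}|_{0}^{2}\,dV_{t}$ is then controlled using $V_{T}\le C$ together with the bound on $\sup_{\tau}\mathbf{E}|v^{n}_{\tau}|_{0}^{2}$.

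For the estimate \eqref{ineq:paprest} with $p<2$ I would deliberately avoid the Burkholder--Davis--Gundy inequality, which under Assumption \ref{asm:mainSES} alone would only bound the martingale's quadratic variation at the cost of an $n$-dependent (or uncontrolled) constant. Instead I would use Lenglart's domination inequality: the display above exhibits $|v^{n}_{t}|_{0}^{2}$ as a nonnegative process dominated, in Lenglart's sense, by an increasing process whose terminal expectation is controlled by $\mathbf{E}[|\varphi|_{\alpha}^{2}+\int_{0}^{T}\bar{f}_{t}\,dV_{t}]$. Applying the domination inequality with exponent $p/2\in(0,1)$ upgrades the bound on $\sup_{\tau}\mathbf{E}|v^{n}_{\tau}|_{0}^{2}$ to the desired bound on $\mathbf{E}\sup_{t\le T}|v^{n}_{t}|_{0}^{p}$, again uniformly in $n$.

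Finally, for part (ii), Assumption \ref{asm:specSES}$(\alpha,\mu)$ is exactly what keeps the martingale part under control in the stronger $\sup_{t}$-inside-expectation norm. Its part (i) bounds pathwise the increment $d[\mathfrak{M}(v^{n})]_{0;t}$ together with the correction $2\{v^{n}_{t-}\mathcal{M}_{t}v^{n}_{t-}\}_{H^{0}}dM_{t}$ by $|v^{n}_{t-}|_{0}^{2}\,dA_{t}+G_{t}(v^{n})\,dM_{t}$ with $|G_{t}(v^{n})|_{\mathcal{H}_{t}^{\ast}}dV_{t}\le L|v^{n}_{t-}|_{0}^{2}\,dV_{t}$, while part (ii) controls the cross term with $g$ through $\bar{g}$; feeding these into BDG produces a leading contribution of the form $\varepsilon\,\mathbf{E}\sup_{t\le T}|v^{n}_{t}|_{0}^{2}$ that can be absorbed into the left-hand side, yielding \eqref{ineq:esupaprest} with the extra $\bar{g}_{t}^{2}$ term and a constant independent of $n$. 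The main obstacle throughout is the conjugation step of the first paragraph: transporting the $H^{\alpha}$-coercivity of the original operators to $H^{0}$-coercivity for \eqref{eq:SESvanlam} without loss and uniformly in $n$, and pairing it with a martingale estimate (Lenglart for $p<2$, BDG-plus-absorption under Assumption \ref{asm:specSES}) that does not reintroduce $n$-dependence as the viscosity degenerates.
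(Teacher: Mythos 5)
Your proposal is correct and follows essentially the same route as the paper's proof: conjugation by $\Lambda^{\alpha}$ with the same algebraic identities to transport Assumption \ref{asm:mainSES}$(\alpha,\mu)$ into $H^{0}$-level coercivity with the $-\tfrac{2}{n}|v|_{\mu}^{2}$ viscosity term, existence via the theorems of \cite{Gy82} and \cite{GyKr81}, the It\^o formula for $|v^{n}_{t}|_{0}^{2}$ plus a Gronwall-type argument over stopping times (the paper invokes Lemmas 2 and 3 of \cite{GrMi83} for exactly this) to get \eqref{ineq:SupExpEst}, Lenglart's domination inequality for \eqref{ineq:paprest} (this is precisely the paper's citation of Corollary II in \cite{Le77}), and Burkholder--Davis--Gundy with absorption under Assumption \ref{asm:specSES}$(\alpha,\mu)$ for part \textit{(ii)}. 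No genuine divergence from the paper's argument.
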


\begin{proof}
$(i)$ For each $(\omega,t)\in\Omega\times [0,T]$ and $n\in \mathbf{N}$, let 
\begin{equation*}
\mathcal{L}_t^{\alpha }v=\Lambda ^{\alpha }\mathcal{L }_t\Lambda^{-\alpha }v
,\quad \mathcal{L}_{t}^{\alpha ,n}v=\mathcal{L}_t^{\alpha }v-\frac{1}{n}%
\left( \Lambda ^{\mu }\right) ^{2}v,\quad \mathcal{M}_{t}^{\alpha}v=\mathcal{%
M}_{t}\Lambda^{-\alpha}v(\Lambda ^{\alpha })^{\ast}.
\end{equation*}
Using basic properties of the spaces $(H^{\alpha} )_{\alpha\in\mathbf{R}}$
and the operators $(\Lambda^{\alpha})_{\alpha\in\mathbf{R}}$, $dV_{t}d%
\mathbf{P}$-a.e.\ for all $v\in H^{\mu }$, we have 
\begin{equation*}
2\langle v,\mathcal{L}_{t}^{\alpha }v\rangle _{\mu }=2(
\Lambda^{\mu}v,\Lambda^{-\mu}\Lambda^{\alpha}\mathcal{L}_{t}\Lambda^{-%
\alpha}v)_0=2(\Lambda^{\mu}\Lambda^{-\alpha}v,\Lambda^{-\mu}\mathcal{L}%
_{t}\Lambda^{-\alpha}v) _{\alpha },
\end{equation*}
\begin{equation*}
2\langle v,(\Lambda^{\mu})^2v\rangle_{\mu}=2(
\Lambda^{\mu}v,\Lambda^{\mu}v)_0=|v|^2_{\mu},
\end{equation*}
and 
\begin{align*}
\left|\mathcal{M}_{t}^{\alpha }v\right|_{L_{2}(H^{0},\mathcal{H}%
_{t}^{\ast})}^{2}&=\sum_{k=1}^{\infty }|\Lambda^{\alpha}(\mathcal{M}%
_{t}\Lambda^{-\alpha}v)^{\ast}\tilde{e}^{n}_t|_{H^0}^{2}=\sum_{k=1}^{\infty
}|(\mathcal{M}_{t}\Lambda^{-\alpha}v)^{\ast}\tilde{e}^{n}_t|_{H^{\alpha}}^{2}
\\
&=\sum_{k=1}^{\infty }|\mathcal{M}_{t}\Lambda^{-\alpha}v \bar{h}^k|_{%
\mathcal{H}_t^{\ast}}^{2}=\left|\mathcal{M}_{t}^{\alpha
}v\right|_{L_{2}(H^{\alpha},\mathcal{H}_{t}^{\ast})}^{2},
\end{align*}
where $( \tilde{e}_{t}^{k}) _{k\in\mathbf{N}}$,and $(\bar{h}^k)_{k\in 
\mathbf{N}}$ are orthonormal basis of $\mathcal{H}_t$ and $H^{\alpha}$,
respectively. It follows form Assumption \ref{asm:mainSES}$\left( \alpha
,\mu \right) $ that $dV_{t}d\mathbf{P}$-a.e.\ for all $v\in H^{\mu }$, we
have 
\begin{equation*}
|\mathcal{L}_{t}^{\alpha ,n}v|_{-\mu }\le \left( K+\frac{1}{n}\right)
|v|_{\mu }, \quad \left|\mathcal{M}_{t}^{\alpha }v\right|_{L_{2}(H^{0},%
\mathcal{H}_{t}^{\ast})}\le K|v|_{\mu},
\end{equation*}
and 
\begin{equation}
2\langle v,\mathcal{L}_{t}^{\alpha ,n}v+\Lambda ^{\alpha }f_{t}\rangle _{\mu
}+\left|\mathcal{M}_{t}^{\alpha }v+g_{t}(\Lambda ^{\alpha
})^{\ast}\right|_{L_{2}(H^{0},\mathcal{H}_{t}^{\ast})}^{2} \le -\frac{2}{n}%
|v|_{\mu }^{2}+L|v|_{0}^{2}+\bar{f}_{t}.  \label{ineq:newdis}
\end{equation}

In \cite{Gy82}, the variational theory for monotone stochastic evolution
equations driven by locally square integrable Hilbert-space-valued
martingales was derived; it is worth mentioning that the c\`adl\`ag version
of the variational solution in the pivot space and the uniqueness of the
solution was obtained using Theorem 2 in \cite{GyKr81}. The theorems and
proofs given in \cite{Gy82} continue to hold for equations driven by the
cylindrical martingales we consider in this paper. Therefore, by Theorems
2.9 and 2.10 in \cite{Gy82}, for every $n\in \mathbf{N}$, there exists a
unique solution $v^n=(v_t^n)_{t\le T}$ of the stochastic evolution equation
given by 
\begin{equation*}
v_{t}^{n}=\Lambda ^{\alpha }\varphi +\int_0^t(\mathcal{L}_{s}^{\alpha
,n}v_{s}^{n}+\Lambda ^{\alpha }f_{s})dV_{s}+\int_0^t(\mathcal{M}_{s}^{\alpha
}v_{s-}^{n}+\Lambda ^{\alpha }g_{s})dM_{s}, \;\;t\le T.
\end{equation*}
Furthermore, by virtue of Theorem 4.1 in \cite{Gy82}, there is a constant $%
N(n)=N(n,L,\allowbreak K,C)$ such that 
\begin{equation}  \label{ineq:estdepn}
\mathbf{E}\left[ \sup_{t\le T}|v_{t}^{n}|_{0}^{2}\right] +\mathbf{E}%
\int_0^T|v_{t}^{n}|_{\mu }^{2}dV_{t}\le N(n)\mathbf{E}\left[ |\varphi
|_{\alpha }^{2}+\int_0^T\bar{f}_{t}dV_{t}\right].
\end{equation}

We will now use our assumptions to obtain bounds of the solutions $v_t^n$, $%
n\in\mathbf{N}$, in the $H^0$-norm independent of $n$. Applying Theorem 2 in 
\cite{GyKr81}, $\mathbf{P}$-a.s.\ for all $t\in [0,T]$, we have 
\begin{equation}  \label{eq:itosquare}
|v_{t}^{n}|_{0}^{2}=|\varphi |_{\alpha }^{2}+\int_{0}^{t}2\langle v_{s}^{n},%
\mathcal{L}_{t}^{\alpha ,n}v_{s}^{n}+\Lambda ^{\alpha }f_{s}\rangle
_{\mu}dV_{s}+\left[ \tilde{\mathfrak{M}}\right] _{0,t}+m_{t},
\end{equation}%
where $(\tilde{\mathfrak{M}}_t)_{t\le T}$ and $(m_{t})_{t\le T}$ are local
martingales given by 
\begin{equation*}
\tilde{\mathfrak{M}}_t:=\int_{0}^{t}(\mathcal{M}_{s}^{\alpha
}v_{s-}^{n}+\Lambda ^{\alpha }g_{s})dM_{s},\quad
m_{t}:=2\int_{0}^{t}\{v_{s-}^{n}(\mathcal{M}_{s}^{\alpha }v_{s-}^{n}+\Lambda
^{\alpha }g_{s})\}_{H^0}dM_{s}.
\end{equation*}
Thus, taking the expectation of \eqref{eq:itosquare} and making use of
Assumption \ref{asm:mainSES}$\left( \alpha ,\mu \right)$, \eqref{ineq:newdis}%
, and \eqref{ineq:estdepn}, we find that for all $\tau \in \mathcal{T}$, 
\begin{align*}
\mathbf{E}\left[ |v_{\tau }^{n}|_{0}^{2}\right] &\le \mathbf{E} \left[%
|\varphi |_{\alpha }^{2}\right]+ \mathbf{E}\int_{0}^{\tau }\left(2\langle
v_{t}^{n},\mathcal{L}_{t}^{\alpha ,n}v_{t}^{n}+\Lambda ^{\alpha
}f_{t}\rangle _{\mu }+|\mathcal{M}_{t}^{\alpha }v+\Lambda ^{\alpha
}g_{t}|_{L_{2}(H^{0},\mathcal{H}_{t})}^{2}\right)dV_{t} \\
&\le \mathbf{E}\left[ |\varphi |_{\alpha }^{2}+\int_{0}^{\tau }\left(
L|v_{t}^{n}|_{0}^{2}-\frac{2}{n}|v_t^n|_{\mu }^{2}+\bar{f}_{t}\right) dV_{t}%
\right].
\end{align*}%
This implies that for any $\tau \in \mathcal{T}^{p}$, 
\begin{equation*}
\mathbf{E}\left[ |v_{\tau -}^{n}|_{0}^{2}\right] +\frac{2}{n}\mathbf{E}%
\int_{0}^{\tau }|v_t^n|_{\mu }^{2}dV_t \le \mathbf{E}\left[ |\varphi
|_{\alpha }^{2}+ \int_{0}^{\tau }\left( L|v_{s}^{n}|_{0}^{2}+\bar{f}%
_{s}\right)dV_{s}\right].
\end{equation*}%
By virtue of Lemmas 2 and 3 in \cite{GrMi83}, we deduce that there is a
constant $N=N(L,K,C)$ such that for any $\tau \in \mathcal{T},$%
\begin{equation*}
\mathbf{E}\left[ | v_{\tau }^{n}| _{0}^{2}\right]+\frac{1}{n} \mathbf{E}%
\int_{0}^{\tau }|v_t^n|_{\mu }^{2}dV_t \le N \mathbf{E} \left[|\varphi
|_{\alpha }^{2} +\int_{0}^{\tau }\bar{f}_s dV_{s}\right],
\end{equation*}%
which implies that \eqref{ineq:SupExpEst} holds since $V_t$ is uniformly
bounded by the constant $C$. Finally, owing to Corollary II in \cite{Le77},
we have \eqref{ineq:paprest}.

$(ii)$ Using Assumption \ref{asm:specSES}$\left( \alpha ,\mu \right) $ and
estimating \eqref{eq:itosquare}, we get that $\mathbf{P}$-a.s. 
\begin{align*}
d|v_{t}^{n}|_{0}^{2}&\le |\varphi |_{\alpha }^{2} +| v^n_{t-}| _{\lambda
}^{2}dA_{t}+| v^n_{t-}| _{\lambda }dB_{t}+( G_{t}(v^n)+\bar{G}_{t}\left(
v^n\right) )dM_{t}.
\end{align*}
Moreover, for any $\tau \in \mathcal{T}$, we obtain 
\begin{equation*}
\mathbf{E}\left[\sup_{t\le \tau }| v_{t}^{n}|_{0}^{2}\right]\le N\mathbf{E}%
\left[| \varphi | _{\alpha }^{2}+\int_{0}^{\tau
}|v_{t}^{n}|_{0}^{2}dV_{t}+\int_{0}^{\tau }(\bar{f}_{t}+\bar{g}%
_{t}^{2})dV_{t}+\sup_{t\le \tau }| l_{t}^{n}|\right],
\end{equation*}%
where $l^n_{t}:=\int_{0}^{t}(G_{s}(v^n)+\bar{G}_s(v^n))dM_{s}.$ Moreover, by
the Burkholder-Davis-Gundy inequality and Young's inequality, we have 
\begin{align*}
\mathbf{E}\sup_{t\le \tau }| l_{t}^{n}| &\le N\mathbf{E}\left[ \left(
\int_{0}^{\tau }(| v^n_{t-}| _{\alpha }^{4}+| v^n_{t-}| _{\alpha }^{2}\bar{g}%
^2_t)dV_{t}\right) ^{\frac{1}{2}}\right] \le N\mathbf{E}\left[ \sup_{t\le
\tau }| v_{t}^{n}| _{0}\left( \int_{0}^{\tau }(| v^n_{t-}| _{\alpha }^{2}+%
\bar{g}_t^2)dV_{t}\right) ^{\frac{1}{2}}\right] \\
&\le\frac{1}{4N}\mathbf{E}\left[\sup_{t\le \tau }|v_{t}^{n}|_{0}^{2}\right]+N%
\mathbf{E}\int_{0}^{\tau }(| v^n_{t}| _{\alpha }^{2}+\bar{g}_t^2)dV_{t},
\end{align*}%
from which estimate \eqref{ineq:esupaprest} follows.
\end{proof}

\begin{proof}[Proof of Theorem \protect\ref{th:Degenerate} ]
$(i)$ Let $v^{n}=(v_{t}^{n})_{t\le T}$ be the unique solution of %
\eqref{eq:SESvanlam} constructed in Lemma \ref{lem:aprioribound}. Since $%
v^{n}\in \mathcal{W}^{0,\mu }$, it follows that $u^{n}:=\Lambda ^{-\alpha
}v^{n}\in \mathcal{W}^{\alpha ,\mu }\subseteq \mathcal{W}^{0,\mu }$ is a
solution of \eqref{eq:SESvan} in the triple $(H^{-\mu },H^{0},H^{\mu })$.

We will first show that $(u^{n})_{n\in \mathbf{N}}$ is Cauchy in $H^{0}$.
Letting $u^{n,m}=u^{n}-u^{m}$, for each $n,m\in \mathbf{N}$, we have 
\begin{equation*}
u_{t}^{n,m}=\int_{0}^{t}(\mathcal{L}_{t}^{n}u_{t}^{n}-\mathcal{L}%
_{t}^{m}u_{t}^{m})dV_{t}+\int_{0}^{t}\mathcal{M}_{t}u^{n,m}dM_{t},\;\;t\le T.
\end{equation*}%
Applying Theorem 2 in \cite{GyKr81}, we obtain that $\mathbf{P}$-a.s. for
all $t\in [ 0,T]$, 
\begin{equation}
|u_{t}^{n}-u_{t}^{m}|_{0}^{2}=\int_{0}^{t}2\langle u_{s}^{n}-u_{s}^{m},%
\mathcal{L}_{s}^{n}u_{s}^{n}-\mathcal{L}_{s}^{m}u_{s}^{m}\rangle _{\mu
,0}dV_{s}+\left[ \mathfrak{M}^{n,m}\right] _{t}+\eta _{t}^{n,m}
\label{eq:itosquarecauchy}
\end{equation}%
where $(\mathfrak{M}_{t}^{n,m})_{t\le T}$ and $(\eta _{t}^{n,m})_{t\le T}$
are local martingales given by 
\begin{equation*}
\mathfrak{M}_{t}^{n,m}:=\int_{0}^{t}\mathcal{M}%
_{s}(u_{s-}^{n}-u_{s-}^{m})dM_{s},\quad \eta
_{t}^{m,n}:=2\int_{0}^{t}\{(u_{s-}^{n}-u_{s-}^{m})\mathcal{M}%
_{s}(u_{s-}^{n}-u_{s-}^{m})\}_{H^{0}}dM_{s}.
\end{equation*}%
Assumption \ref{asm:mainSES}$\left( 0,\mu \right) (i)$ implies that for any $%
\tau \in \mathcal{T}$, 
\begin{align*}
\mathbf{E}\left[ |u_{\tau }^{n,m}|_{0}^{2}\right] & \le \mathbf{E}%
\int_{0}^{\tau }\left( 2\langle u_{s}^{n,m},\mathcal{L}_{s}^{n}u_{s}^{n,m}%
\rangle _{\mu }+|\mathcal{M}_{t}u_{s}^{n,m}|_{L_{2}(H^{0},\mathcal{H}%
_{t}^{\ast })}^{2}\right) dV_{s} \\
& \le \mathbf{E}\int_{0}^{\tau }\left( L|u_{s}^{n,m}|_{0}^{2}+\frac{1}{n}%
|u_{s}^{n}|_{\mu }^{2}+\frac{1}{m}|u_{s}^{m}|_{\mu }^{2}\right) dV_{s},
\end{align*}%
and hence for any $\tau \in \mathcal{T}^{p}$, we have 
\begin{equation*}
\mathbf{E}\left[ |u_{\tau -}^{n.m}|_{0}^{2}\right] \le \mathbf{E}%
\int_{0}^{\tau }\left( L|u_{s}^{n.m}|_{0}^{2}+\frac{1}{n}|u_{s}^{n}|_{\mu
}^{2}+\frac{1}{m}|u_{s}^{m}|_{\mu }^{2}\right) dV_{s}.
\end{equation*}%
By virtue of Lemmas 2 and 4 in \cite{GrMi83} and \eqref{ineq:SupExpEst}
(noting that $|u_{t}^{n}|_{0}=|\Lambda ^{-\alpha }v_{t}^{n}|_{0}\le
N|v_{t}^{n}|_{0}$), there is a constant $N$ such that for any $\tau \in 
\mathcal{T},$%
\begin{equation}
\mathbf{E}\left[ |u_{\tau }^{n,m}|_{0}^{2}\right] \le N\mathbf{E}%
\int_{0}^{\tau }\left( \frac{1}{n}|u_{s}^{n}|_{\mu }^{2}+\frac{1}{m}%
|u_{s}^{m}|_{\mu }^{2}\right) dV_{s}\le N\left( \frac{1}{n}+\frac{1}{m}%
\right) \mathbf{E}\left[ |\varphi |_{\alpha }^{2}+\int_{0}^{T}\bar{f}%
_{t}dV_{t}\right] .  \label{ineq:squarecauchybound}
\end{equation}%
Using Corollary II in \cite{Le77}, we have that for each $p\in (0,2)$, there
is a constant $N$ such that 
\begin{equation*}
\mathbf{E}\left[ \sup_{t\le T}|u_{t}^{n,m}|_{0}^{p}\right] \le N\left( \frac{%
1}{n}+\frac{1}{m}\right) ^{\frac{p}{2}}\left[ \mathbf{E}\left[ |\varphi
|_{\alpha }^{2}+\int_{0}^{T}\bar{f}_{t}dV_{t}\right] \right] ^{\frac{p}{2}}.
\end{equation*}%
Therefore, 
\begin{equation}
\lim_{n,m\rightarrow \infty }\left[ \sup_{\tau \in \mathcal{T}}\mathbf{E}%
\left[ |u_{\tau }^{n,m}|_{0}^{2}\right] +\mathbf{E}\left[ \sup_{t\le
T}|u_{t}^{n,m}|_{0}^{p}\right] \right] =0,  \label{eq:limitnotnorm}
\end{equation}%
and hence there exists a strongly c\`{a}dl\`{a}g $H^{0}$-valued process $%
u=(u_{t})_{t\le T}$ such that 
\begin{equation}
\underset{n\rightarrow \infty }{d\mathbf{P}-\lim }\sup_{t\le
T}|u_{t}-u_{t}^{n}|_{0}=0.  \label{eq:convergeprobzero}
\end{equation}

Since for each $n$, $u^n$ is solution of \eqref{eq:SESvan}, we have that $%
\mathbf{P}$-a.s. for all $t\in [0,T]$ and $\phi \in H^{\mu}$, 
\begin{equation}  \label{eq:approxeq}
(\phi, u^n_{t})_0=(\phi, \varphi)_0 + \int_0^t\langle \phi,\mathcal{L}%
_{s}^{n}u^n_{s}+f_{s}\rangle_{\mu,0}dV_{s}+\int_0^t\{\phi(\mathcal{M}%
_{s}u^n_{s-}+g_{s})\}_{H^0}dM_{s}.
\end{equation}
Owing to \eqref{eq:convergeprobzero}, we know that the left-hand-side of %
\eqref{eq:approxeq} converges $\mathbf{P}$-a.s.\ for all $t\in [0,T]$ to $%
(\phi, u_{t})_0$ as $n$ tends to infinity. Our aim, of course, is to pass to
the limit as $n$ tends to infinity on the right-hand-side.

This can be done quite simply when $\alpha>\mu$. Indeed, owing to the
interpolation inequality \eqref{ineq:interpolationinequalityscale}, for each 
$\varepsilon >0$, $\alpha ^{\prime }<\alpha ,$ and $p\in (0,2)$, there is a
constant $N=N(\alpha,\alpha^{\prime },\varepsilon,p)$ such that%
\begin{equation}  \label{ineq:interpolationpalphaprime}
\sup_{t\le T}| u_{t}^{n,m}| _{\alpha ^{\prime }}^p\le \varepsilon \sup_{t\le
T}| u_{t}^{n,m}| _{\alpha }^p+N\sup_{t\le T}| u_{t}^{n,m}| _{0}^p.
\end{equation}
Since $| u_{t}^{n}| _{\alpha}=| \Lambda ^{-\alpha }v_{t}^{n}| _{\alpha}\le
N| v_{t}^{n}| _{0},$ by \eqref{ineq:esupaprest} and %
\eqref{ineq:interpolationpalphaprime}, we have that for all $\alpha ^{\prime
}<\alpha $ and $p\in (0,2),$ 
\begin{equation}  \label{ineq:interppasstolimit}
\mathbf{E}\sup_{t\le T}| u_{t}^{n,m}| _{\alpha ^{\prime }}^p\le \varepsilon N%
\mathbf{E}\left[ \left( |\varphi |_{\alpha }^{2}+\int_0^T\bar{f}%
_{t}dV_{t}\right) ^{\frac{p}{2}}\right] +N\mathbf{E}\sup_{t\le T}|
u_{t}^{n,m}| _{0}^p.
\end{equation}
Using \eqref{eq:limitnotnorm} and passing to the limit as $n$ and $m$ tends
to infinity on both sides of \eqref{ineq:interppasstolimit}, and then taking 
$\varepsilon\downarrow 0$, we get that for all $\alpha ^{\prime }<\alpha $
and $p\in (0,2),$ 
\begin{equation*}
\lim_{n,m\rightarrow\infty} \mathbf{E}\left[ \sup_{t\le
T}|u_{t}^{n,m}|_{\alpha ^{\prime }}^{p}\right]=0.
\end{equation*}
Combining the above results, we conclude that for any $\alpha^{\prime
}<\alpha$, $u$ is an $H^{\alpha ^{\prime }}$-valued strongly c\`{a}dl\`{a}g
process and 
\begin{equation}  \label{eq:convergeprobalphlmu}
\underset{n\rightarrow\infty}{d\mathbf{P}-\lim}\sup_{t\le
T}|u_t-u^n_t|_{\alpha^{\prime }}=0.
\end{equation}
In particular, if $\alpha>\mu$, then taking $\alpha^{\prime }>\mu$ in %
\eqref{eq:convergeprobalphlmu} and appealing to Assumption \ref{asm:mainSES}$%
\left( 0 ,\mu \right)(ii)$, \eqref{ineq:SupExpEst}, and the identity, 
\begin{equation*}
\langle \phi,\Lambda ^{2\mu}u_{s}^{n}\rangle _{\mu}= (
\Lambda^{\mu}\phi,\Lambda ^{\mu}u_{s}^{n}) _{0},
\end{equation*}
we can take the limit as $n$ tends to infinity on the right-hand-side of %
\eqref{eq:approxeq} by the dominated convergence theorem to conclude that $u$
is a solution of \eqref{eq:SES}.

The case $\alpha=\mu$ must be handled with weak convergence. Let 
\begin{equation*}
S(\mathcal{O}_T)=(\Omega \times [ 0,T],\mathcal{O}_{T} ,d\bar{V}_t d\mathbf{P%
})\quad \mathnormal{and}\quad S(\mathcal{P}_T)=(\Omega \times [ 0,T],%
\mathcal{P}_{T} ,d\bar{V}_t d\mathbf{P}),
\end{equation*}
where $\bar{V}_t=:V_t+t$. It follows from \eqref{ineq:SupExpEst} that there
exists a subsequence $(u^{n_{k}})_{k\in \mathbf{N}}$ of $(u^n)_{n\in \mathbf{%
N}}$ that converges weakly in $L^{2}(S(\mathcal{O}_T);H^{\mu })$ to some $%
\bar{u}\in L^{2}(S(\mathcal{O}_T);H^{\mu })$ which satisfies 
\begin{equation*}
\mathbf{E}\int_{]0,T]}|\bar{u}_{t}|_{\mu}^{2}d\bar{V}_{t}\le N\mathbf{E}%
\left[|\varphi |_{\mu }^{2}+\int_0^T\bar{f}_{t}dV_{t}\right].
\end{equation*}
For any $\phi\in H^{0}$ and bounded predictable process $\xi_t$, we have 
\begin{equation*}
\lim_{k\rightarrow\infty }\mathbf{E}\int_0^T \xi_t \langle\phi ,
u^{n_k}_t\rangle_{\mu} d\bar{V}_t =\lim_{k\rightarrow\infty} \mathbf{E}%
\int_0^T \xi_t ( u^{n_k}_t,\phi)_0 d\bar{V}_t = \mathbf{E}\int_0^T \xi_t (
u_t,\phi)_0 d\bar{V}_t ,
\end{equation*}
and hence $u = \bar{u}$ in $L^{2}(S(\mathcal{O}_T);H^{\mu })$ and $u^{n_k}$
converges to $u$ weakly in $L^{2}(S(\mathcal{O}_T);H^{\mu })$ as $k$ tends
to infinity. Define $u^{n_k}_{-}=(u^{n_k}_{t-})_{t\le T}$ and $%
u_{-}=(u_{t-})_{t\le T}$, where the limits are taken in $H^{0}$. By
repeating the argument above, we conclude that $u^{n_k}_{-}$ converges to $%
u_{-}$ weakly in $L^{2}(S(\mathcal{O}_T);H^{\mu })$ as $k$ tends to infinity.

Fix $\phi\in H^{\mu}$ and a $\mathcal{P}_{T}$-measurable process $(\xi
_{t})_{t\le T}$ bounded by the constant $K$. Define the linear functionals $%
\Phi ^{\mathcal{L}}:L^{2}(S(\mathcal{O}_T);H^{\mu})\rightarrow \mathbf{R}$
and $\Phi ^{\mathcal{M}}:L^{2}(S(\mathcal{P}_T);H^{\mu})\rightarrow \mathbf{R%
}$ by 
\begin{equation*}
\Phi ^{\mathcal{L}}(v)=\mathbf{E}\int_{]0,T]}\xi _{t}\int_{]0,t]}\langle
\phi,\mathcal{L}_{s}v_{s}\rangle_{\mu,0}dV_{s}d\bar{V}_{t},\quad \forall
v\in L^{2}(S(\mathcal{O}_T);H^{\mu})
\end{equation*}
and 
\begin{equation*}
\Phi ^{\mathcal{M}}(v)=\mathbf{E}\int_{]0,T]}\xi _{t}\int_{]0,t]}\{\phi%
\mathcal{M}_{s}v_{s}\}_{H^0}dM_{s}d\bar{V}_{t},\quad \forall v\in L^{2}(S(%
\mathcal{P}_T);H^{\mu}),
\end{equation*}%
respectively. Owing to Assumption \ref{asm:mainSES}$\left( 0 ,\mu \right)
(ii)$, the Burkholder-Davis-Gundy inequality, and the fact that $(\bar{V}%
_t)_{t\le T}$ is uniformly bounded by the constant $C$, there is a constant $%
N=N(K,C)$ such that 
\begin{equation*}
|\Phi ^{\mathcal{L}}(v)|\le N|\phi|_{\mu}\left(\mathbf{E}%
\int_{0}^{T}|v_{t}|_{\mu}^{2}d\bar V_{t}\right)^{\frac{1}{2}}, \;\;\forall
v\in L^{2}(S(\mathcal{O}_T);H^{\mu}),
\end{equation*}
and 
\begin{gather*}
|\Phi ^{\mathcal{M}}(v)| \le N|\phi|_{\mu}\left(\mathbf{E}%
\int_{]0,T]}|v_{s}|_{\mu }^2d\bar V_{s}\right)^{\frac{1}{2}}, \;\;\forall
v\in L^{2}(S(\mathcal{P}_T);H^{\mu}).
\end{gather*}
This implies that $\Phi ^{\mathcal{L}}$ is a continuous linear functional on 
$L^{2}(S(\mathcal{O}_T);H^{\mu})$ and $\Phi ^{\mathcal{M}}$ is a continuous
linear functional on $L^{2}(S(\mathcal{P}_T);H^{\mu})$, and hence that 
\begin{equation}
\lim_{k\rightarrow \infty }\Phi ^{\mathcal{L}}(u^{n_{k}})=\Phi ^{\mathcal{L}%
}(u), \quad \lim_{k\rightarrow \infty }\Phi ^{\mathcal{L}}(u^{n_{k}}_{-})=%
\Phi ^{\mathcal{M}}(u_{-}) .  \label{eq:convLM}
\end{equation}
For each $k$, we have that 
\begin{align}  \label{eq:expxieqn}
\mathbf{E}\int_0^T\xi_t(\phi, u^{n_k}_{t})_0d\bar{V}_t&=\mathbf{E}%
\int_0^T\xi_t(\phi, \varphi)_0d\bar{V}_t + \mathbf{E}\int_0^T\xi_t\int_0^t%
\langle \phi,\mathcal{L}_{s}^{n_k}u^{n_k}_{s}+f_{s}\rangle_{\mu,0}dV_{s}d%
\bar{V}_t \\
&\quad +\mathbf{E}\int_0^T\xi_t\int_0^t\{\phi(\mathcal{M}_{s}u^n_{s-}+g_{s})%
\}_{H^0}dM_{s}d\bar{V}_t.  \notag
\end{align}
Passing to the limit as $k$ tends to infinity on both sides of %
\eqref{eq:expxieqn} using \eqref{eq:convLM} and 
\begin{equation*}
\langle \phi,\Lambda ^{2\mu}u_{s}^{n_{k}}\rangle _{\mu}= (
\Lambda^{\mu}\phi,\Lambda ^{\mu}u_{s}^{n_{k}}) _{0},
\end{equation*}
we obtain that $d\bar{V}_t d\mathbf{P}$-a.e.\, 
\begin{equation}  \label{eq:solutionlamm1}
( \phi,u_{t}) _{0} =(\phi,\varphi )_{0}+\int_{]0,t]}\langle \phi,(\mathcal{L}%
_{s}u_{s}+f_{s})\rangle _{\mu}dV_{s}+\int_{]0,t]}\{\phi( \mathcal{M}%
_{s}u_{s-}+g_{s})\}_{H_0}dM_{s},\;\;t\le T.
\end{equation}
Therefore, for all $\alpha\ge \mu$, $u$ is a solution of \eqref{eq:SES}.

We will now show that 
\begin{equation}  \label{ineq:supexpalphaprime}
\underset{\tau \in \mathcal{T}}{\sup }\mathbf{E}\left[ |u_{\tau }|_{\alpha
}^{2}\right] \le N\mathbf{E}\left[ |\varphi |_{\alpha }^{2}+\int_{0}^{T}\bar{%
f}_{t}dV_{t}\right].
\end{equation}
Let $(h^{k})_{k\in \mathbf{N}}$ be a complete orthonormal basis in $%
H^{\alpha }$ such that the linear subspace spanned by $(h^{k})_{k\in \mathbf{%
N}}$ is dense in $H^{2\alpha }$. Owing to \eqref{ineq:SupExpEst}, for each $%
m\ge 1$ and $\tau \in \mathcal{T}$, 
\begin{equation*}
\mathbf{E}\left[ \sum_{k=1}^{m}|(u_{\tau }^{n},\Lambda ^{2\alpha
}h^{k})_{0}|^{2}\right]=\mathbf{E}\left[ \sum_{k=1}^{m}|(u_{\tau
}^{n},h^{k})_{\alpha }|^{2}\right] \le \mathbf{E}\left[ | u_{\tau }^{n}|
_{\alpha }^{2}\right] \le N\mathbf{E}\left[ |\varphi |_{\alpha
}^{2}+\int_{0}^{T}\bar{f}_{t}dV_{t}\right] .
\end{equation*}
Applying Fatou's lemma first in $n$ and then in $m$, we have that for each $%
\tau \in \mathcal{T} $,%
\begin{equation*}
\mathbf{E}\left[ \sum_{k=1}^{\infty }|(u_{\tau },\Lambda ^{2\alpha
}h^{k})_{0}|^{2}\right] \le N\mathbf{E}\left[ |\varphi |_{\alpha
}^{2}+\int_{0}^{T}\bar{f}_{t}dV_{t}\right] .
\end{equation*}
Hence, for each $t\in [0,T]$, $\mathbf{P}$-a.s.\ $v_t=\sum_{k}(u_{t
},\Lambda ^{2\alpha }h^{k})_{0}h^{k}\in $ $H^{\alpha }$. Since the linear
subspace spanned by $( \Lambda ^{2\alpha }h^{k})_{k\in\mathbf{N}} $ is dense
in $H^{0}$ and for each $t\in [0,T]$, $\mathbf{P}$-a.s., $( u_{t
}-v_t,\Lambda ^{2\alpha }h^{k})_{0}=0$, for all $k\in \mathbf{N}$, it
follows that $\mathbf{P}$-a.s. for each $\tau \in \mathcal{T} $, $u_{\tau
}=v $ and 
\begin{equation*}
\mathbf{E}\left[ |u_{\tau }|_{\alpha }^{2}\right] =\mathbf{E}%
\sum_{k=1}^{\infty }\left|(u_{\tau },\Lambda ^{2\alpha
}h^{k})_{0}\right|^{2}\le N\mathbf{E}\left[ |\varphi |_{\alpha
}^{2}+\int_{0}^{T}\bar{f}_{t}dV_{t}\right] ,
\end{equation*}
which proves \eqref{ineq:supexpalphaprime}.

Estimating \eqref{eq:SES} directly in the $H^{\alpha-\mu}$-norm, we easily
derive that 
\begin{equation*}
\mathbf{E}\left[ \sup_{t\le T}| u_{t}| _{\alpha-\mu }^{2}\right] \le N 
\mathbf{E}\left[ |\varphi |_{\alpha }^{2}+\int_{0}^{T}\bar{f}_sdV_{t}\right].
\end{equation*}%
If $(v_{t})_{t\le T}$ be another solution of \eqref{eq:SES}, then by Theorem
2 in \cite{GyKr81} and Assumption \ref{asm:mainSES}$(0,\mu)(i)$, $\mathbf{P}$%
-a.s. for all $t\in [ 0,T]$, we have 
\begin{equation*}
|u_{t}-v_t|_{0}^{2}\le L\int_0^t |u_s-v_s|_{0}^2dV_s+ m_{t},
\end{equation*}%
where $(m_{t})_{t\le T}$ is a local martingale with $m_{0}=0$, and hence
applying Lemmas 2 and 4 in \cite{GrMi83}, we get 
\begin{equation*}
\mathbf{P}\left( \sup_{t\le T}|u_{t}-v_t|_{0}>0\right) =0,
\end{equation*}
which implies that $(u_t)_{t\le T}$ is the unique solution of \eqref{eq:SES}%
. This completes the proof of part $(i)$.

$(ii)$ Estimating \eqref{eq:itosquarecauchy} using Assumption \ref%
{asm:specSES}$\left( 0 ,\mu \right) $, we get that $\mathbf{P}$-a.s. 
\begin{equation*}
d|u^{n,m}_{t}|_{0}^{2}\le |\varphi |_{0 }^{2} +\left(\frac{1}{n}+\frac{1}{m}%
\right)|u^{n,m}|_0^2dV_t+| u^{n,m}_{t-}| _{\lambda }^{2}dA_{t}+|
u^{n,m}_{t-}| _{\lambda }dB_{t}+( G_{t}(u^{n,m})+\bar{G}_{t}\left(
u^{n,m}\right) )dM_{t}.
\end{equation*}
Then estimating the stochastic integrand as in the proof of part $(ii)$ of
Lemma \ref{lem:aprioribound}, for any $\tau \in \mathcal{T},$ we get 
\begin{equation*}
\mathbf{E}\left[ \sup_{t\le \tau }|u_{t}^{n.m}|_{0}^{2}\right] \le N\mathbf{E%
}\int_{0}^{\tau }\left(|u_{s}^{n.m}|_{0}^{2}+\frac{1}{n}| u_{s}^{n}| _{\mu
}^{2}+\frac{1}{m}| u_{s}^{m}| _{\mu }^{2})\right) dV_{s},
\end{equation*}
and hence by Gronwall's lemma and Lemma \ref{lem:aprioribound}(ii), 
\begin{equation*}
\mathbf{E}\left[ \sup_{t\le \tau }|u_{t}^{n.m}|_{0}^{2}\right] \le N\left(%
\frac{1}{n}+\frac{1}{m}\right)\mathbf{E}\left[ |\varphi |_{\alpha
}^{2}+\int_{]0,T]}\left(\bar{f}_{t}+\bar{g}_{t}^{2}\right)dV_{t}\right]
\end{equation*}
Thus, 
\begin{equation}  \label{eq:limEsup2}
\lim_{n,m\rightarrow\infty}\mathbf{E}\left[ \sup_{t\le \tau
}|u_{t}^{n.m}|_{0}^{2}\right] =0.
\end{equation}
Let $(h^{k})_{k\in \mathbf{N}} $ be a complete orthonormal basis $H^{\alpha
} $ such that the linear subspace spanned by $(h^k)_{k\in \mathbf{N}}$ is
dense in $H^{2\alpha }$. Owing to \eqref{ineq:esupaprest}, for each $m\ge 1$
and $\tau \in \mathcal{T}$, 
\begin{align*}
\mathbf{E}\left[ \sup_{t\le T}\sum_{k=1}^{m}(u_{t}^{n},h^{k})_{\alpha }%
\right] &=\mathbf{E}\left[ \sup_{t\le T}\sum_{k=1}^{m}|(u_{t}^{n},\Lambda
^{2\alpha }h^{k})_{0}|^{2}\right] \le \mathbf{E}\left[ \sup_{t\le T}|
u_{t}^{n}| _{\alpha }^{2}\right] \\
&\le N\mathbf{E}\left[ |\varphi |_{\alpha }^{2}+\int_{]0,T]}( \bar{f}_{t}+%
\bar{g}_{t}^{2})dV_{t}\right] .
\end{align*}%
Applying Fatou's lemma first in $n$ and then in $m$, we have that 
\begin{equation*}
\mathbf{E}\left[ \sup_{t\le T}\sum_{k=1}^{\infty }|(u_{t},\Lambda ^{2\alpha
}h^{k})_{0}|^{2}\right] \le N\mathbf{E}\left[ |\varphi |_{\alpha
}^{2}+\int_{]0,T]}(\bar{f}_{t}+\bar{g}_{t}^{2})dV_{t}\right] .
\end{equation*}
Thus, $v=\sum_{k}(u,\Lambda ^{2\alpha }h^{k})_{0}h^{k}$ is an $H^{\alpha }$%
-valued weakly c\`{a}dl\`{a}g process. Since the linear subspace spanned on $%
(\Lambda ^{2\alpha }h^{k})_{k\in \mathbf{N}} $ is dense in $H^{0}$ and $%
\left( u_{t}-v_{t},\Lambda ^{2\alpha }h^k\right) _{0}=0$, for all $k\in 
\mathbf{N}$, it follows that $\mathbf{P}$-a.s. for all $t\in [0,T]$, $%
u_{t}=v_{t}$ and 
\begin{equation*}
\mathbf{E}\left[ \sup_{t\le T}|u_{t}|_{\alpha }^{2}\right] =\mathbf{E}\left[
\sup_{t\le T}\sum_{k=1}^{\infty }|(u_{t},\Lambda ^{2\alpha }h^{k})_{0}|^{2}%
\right] \le N\mathbf{E}\left[ |\varphi |_{\alpha }^{2}+\int_{]0,T]}(\bar{f}%
_{t}+\bar{g}_{t}^{2})dV_{t}\right] .
\end{equation*}
\end{proof}

\section{On the $L^2$-Sobolev theory for degenerate parabolic SIDEs}

\label{sec:DegenerateLinearSIDESobolev}

\subsection{Statement of main results}

In this section, we consider the $d_{2}$-dimensional system of SIDEs on $%
[0,T]\times \mathbf{R}^{d_{1}}$ given by 
\begin{align}
du_{t}^{l}& =\left( (\mathcal{L}_{t}^{1;l}+\mathcal{L}%
_{t}^{2;l})u_{t}+b_{t}^{i}\partial _{i}u_{t}^{l}+c_{t}^{l\bar{l}}u_{t}^{\bar{%
l}}+f_{t}^{l}\right) dV_{t}+\left( \mathcal{N}_{t}^{l\varrho
}u_{t}+g_{t}^{l\varrho }\right) dw_{t}^{\varrho }  \label{eq:SIDE} \\
& \quad +\int_{Z^{1}}\left( \mathcal{I}_{t,z}^{l}u_{t-}^{\bar{l}%
}+h_{t}^{l}(z)\right) \tilde{\eta}(dt,dz),  \notag \\
u_{0}^{l}& =\varphi ^{l},\;\;l\in \{1,\ldots ,d_{2}\},  \notag
\end{align}%
where for $k\in \{1,2\}$, $l\in \{1,\ldots ,d_{2}\}$, and $\phi \in
C_{c}^{\infty }(\mathbf{R}^{d_{1}};\mathbf{R}^{d_{2}})$, 
\begin{align*}
\mathcal{L}_{t}^{k;l}\phi (x)& :=\frac{1}{2}\sigma _{t}^{k;i\varrho
}(x)\sigma _{t}^{k;j\varrho }(x)\partial _{ij}\phi ^{l}(x)+\sigma
_{t}^{k;i\varrho }(x)\upsilon _{t}^{k;l\bar{l}\varrho }(x)\partial _{i}\phi
^{\bar{l}}(x) \\
& \quad +\int_{Z^{k}}\left( \left( \delta _{l\bar{l}}+\rho _{t}^{k;l\bar{l}%
}(x,z)\right) \left( \phi ^{\bar{l}}(x+\zeta _{t}^{k}(x,z))-\phi ^{\bar{l}%
}(x)\right) -\zeta _{t}^{k;i}(x,z)\partial _{i}\phi ^{l}(x)\right) \pi
_{t}^{k}(dz) \\
\mathcal{N}_{t}^{l\varrho }\phi (x)& :=\sigma _{t}^{1;i\varrho }(x)\partial
_{i}\phi ^{l}(x)+\upsilon _{t}^{1;l\bar{l}\varrho }(x)\phi ^{\bar{l}%
}(x),\;\varrho \in \mathbf{N}, \\
\mathcal{I}_{t,z}^{l}\phi (x)& :=\left( \delta _{l\bar{l}}+\rho _{t}^{1;l%
\bar{l}}(x,z)\right) \phi ^{\bar{l}}(x+\zeta _{t}^{1}(x,z))-\phi ^{l}(x).
\end{align*}%
We assume that 
\begin{equation*}
\sigma _{t}^{k}(x)=(\sigma _{\omega ,t}^{k;i\varrho }(x))_{1\leq i\leq
d_{1},\;\varrho \in \mathbf{N}},\quad b_{t}(x)=(b_{\omega ,t}^{i}(x))_{1\leq
i\leq d_{1}},\quad c_{\omega ,t}(x)=(c_{t}^{l\bar{l}}(x))_{1\leq l,\bar{l}%
\leq d_{2}},
\end{equation*}%
\begin{equation*}
\upsilon _{t}^{k}(x)=(\upsilon _{\omega ,t}^{k;l\bar{l}\varrho }(x))_{1\leq
l,\bar{l}\leq d_{2},\;\varrho \in \mathbf{N}},\quad f_{t}(x)=(f_{\omega
,t}^{i}(x))_{1\leq i\leq d_{2}},\quad g_{t}(x)=(g_{\omega ,t}^{i\varrho
}(x))_{1\leq i\leq d_{2},\;\varrho \in \mathbf{N}}
\end{equation*}%
are random fields on $\Omega \times \lbrack 0,T]\times \mathbf{R}^{d_{1}}$
that are $\mathcal{R}_{T}\otimes \mathcal{B}(\mathbf{R}^{d_{1}})$%
-measurable. Moreover, we assume that 
\begin{equation*}
\zeta _{t}^{1}(x,z)=(\zeta _{\omega ,t}^{1;i}(x,z))_{1\leq i\leq
d_{1}},\;\rho _{t}^{1}(x,z)=(\rho _{\omega ,t}^{1;l\bar{l}}(x,z))_{1\leq l,%
\bar{l}\leq d_{2}},\;h_{\omega ,t}^{i}(x,z))_{1\leq i\leq d_{2}},
\end{equation*}%
are random fields on $\Omega \times \lbrack 0,T]\times \mathbf{R}%
^{d_{1}}\times Z^{1}$ that are $\mathcal{P}_{T}\otimes \mathcal{B}(\mathbf{R}%
^{d_{1}})\otimes \mathcal{Z}^{1}$-measurable and 
\begin{equation*}
\zeta _{t}^{2}(x,z)=(\zeta _{\omega ,t}^{2;i}(x,z))_{1\leq i\leq
d_{1}},\;\rho _{t}^{2}(x,z)=(\rho _{\omega ,t}^{2;l\bar{l}}(x,z))_{1\leq l,%
\bar{l}\leq d_{2}},
\end{equation*}%
are random fields on $\Omega \times \lbrack 0,T]\times \mathbf{R}%
^{d_{1}}\times Z^{2}$ that are $\mathcal{R}_{T}\otimes \mathcal{B}(\mathbf{R}%
^{d_{1}})\otimes \mathcal{Z}^{2}$-measurable. We also assume that there is a
constant $C$ such that $V_{t}\leq C$ for all $(\omega ,t)\in \Omega \times
\lbrack 0,T].$

Let us describe our notation. Let $d\in \mathbf{N}$ be arbitrarily given.
For each integer $d\ge 1$, let $\mathbf{R}^{d}$ be the space of $d$%
-dimensional Euclidean points $x=(x_1,\ldots,\allowbreak x_{d})$. The dot
product of two elements $x,y\in \mathbf{R}^{d}$ is denoted by $x\cdot
y=\sum_{i=1}^{d}=x_iy_i$ and the norm of an element $x\in \mathbf{R}^{d}$ is
denoted by $|x|=\sqrt{x\cdot x}$. Let $\ell_2(\mathbf{R}^{d})$ be the space
of square-summable $\mathbf{R}^d$-valued sequences. The norm of an element $%
x\in \ell_2(\mathbf{R}^{d})$ is denoted by $|x|$. For a $d\times d$%
-dimensional matrix $A$ with real-valued entries, we denote by $\det A$ and $%
\func{tr}A$, the determinant and trace of $A$, respectively. The symmetric
part of a $d\times d$-dimensional matrix $A$ with real-valued entries is
denoted by $A_{\mathnormal{sym}} $. Let $I_{d}$ denote the $d\times d$%
-dimensional identity matrix.

For each $i\in \{1,\ldots,d_1\}$, let $\partial_i=\frac{\partial}{\partial
x_i}$ be the spatial derivative operator with respect to $x_i$ and write $%
\partial_{ij}=\partial_i\partial_j$ for each $i,j\in \{1,\ldots,d_1\}$. For
a once differentiable function $f=(f^{1}\ldots ,f^{d_{1}}):\mathbf{R}%
^{d_{1}}\rightarrow \mathbf{R}^{d}$, we denote the gradient of $f$ by $%
\nabla f=(\partial _{j}f^{i})_{1\le i\le d_1,1\le j\le d}$ and the
divergence of $f$ when $d=d_1$ by $\func{div}f =
\sum_{i=1}^{d_1}\partial_if^i$. For a once differentiable function $%
f=(f^{1\varrho},\ldots,f^{d\varrho})_{\varrho\in\mathbf{N}} : \mathbf{R}%
^{d_1}\rightarrow \ell_2(\mathbf{R}^{d})$, we denote the gradient of $f$ by $%
\nabla f=(\partial_jf^{i\varrho})_{1\le i\le d_1,1\le j\le d,\varrho\in%
\mathbf{N}} $ and the divergence of $f$ when $d=d_1$ by $\func{div}f =
(\sum_{i=1}^{d_1}\partial_if^{i\varrho})_{\varrho\in \mathbf{N}}.$

For a multi-index $\gamma =(\gamma _{1},\ldots ,\gamma _{{d}_1})\in (\mathbf{%
N}\cup \{0\})^{d_{1}}$ of length $|\gamma |:=\gamma _{1}+\cdots +\gamma
_{d_1}$, denote by $\partial ^{\gamma }$ the operator $\partial ^{\gamma
}=\partial _{1}^{\gamma _{1}}\cdots \partial _{d_1}^{\gamma _{d_1}}$, where $%
\partial _{i}^{0}$ is the identity operator for all $i\in \{1,\ldots
,d_{1}\} $, and let $x^{\gamma}=x^{\gamma_1}_1\cdots x_{d_1}^{\gamma_{d_1}}$%
, for each $x\in \mathbf{R}^{d_1}$.

For continuous functions $f:\mathbf{R}^{d_{1}}\rightarrow \mathbf{R}^{d}$,
we define 
\begin{equation*}
[f]_{0}=\sup_{x\in \mathbf{R}^{d_{1}}}|f(x)|
\end{equation*}%
and 
\begin{equation*}
[ f]_{\beta }=\sup_{x,y\in \mathbf{R}^{d_{1}},x\neq y}\frac{|f(x)-f(y)|}{%
|x-y|^{\beta }},\;\;\beta \in (0,1].
\end{equation*}
We denote by $C_{c}^{\infty }(\mathbf{R}^{d_{1}};\mathbf{R}^{d})$ the space
of infinitely differentiable $\mathbf{R}^{d}$-valued functions with compact
support in $\mathbf{R}^{d_1}$.

Let us introduce the following assumption for $m\in\mathbf{N}$ and a real
number $\beta \in [ 0,2]$.

\begin{assumption}[$m,d_{2}$]
\label{asm:coeffmain} Let $N_{0}$ be a positive constant. \ 

\begin{enumerate}
\item For all $(\omega ,t,x)\in \Omega \times \lbrack 0,T]\times \mathbf{R}%
^{d_{1}}$, the derivatives in $x$ of the random fields $b_{t}$, $%
c_{t},\sigma _{t}^{2},$ and $\upsilon _{t}^{2}$ up to order $m$ and $\sigma
_{t}^{k}$ and $\upsilon _{t}^{k}$, $k\in \{1,2\}$, up to order $m+1$ exist,
and for all $x\in \mathbf{R}^{d_{1}}$, 
\begin{equation*}
\max_{|\gamma |\leq m}\left( |\partial ^{\gamma }b_{t}(x)|+|\partial
^{\gamma }c_{t}(x)|+|\partial ^{\gamma }\nabla \sigma _{t}^{1}(x)|+|\partial
^{\gamma }\sigma _{t}^{2}(x)|+|\partial ^{\gamma }\nabla \upsilon
_{t}^{1}(x)|+|\partial ^{\gamma }\upsilon _{t}^{2}(x)|\right) \leq N_{0}.
\end{equation*}

\item For each $k\in \{1,2\}$ and all $(\omega ,t,x,z)\in \Omega \times
\lbrack 0,T]\times \mathbf{R}^{d_{1}}\times Z^{k}$, the derivatives in $x$
of the random field $\zeta_t^{k}(z)$ up to order $m$ exist, and for all $%
x\in \mathbf{R}^{d_{1}}$, 
\begin{align*}
\max_{|\gamma |\leq m}|\partial ^{\gamma }\zeta_t^{1}\left( x,z\right)
|+\max_{|\gamma |=m}\left[ \partial ^{\gamma }\zeta_t^{1}\left( \cdot
,z\right) \right] _{\frac{\beta }{2}}& \leq K_{t}^{1}\left( z\right)
,\;\;\forall z\in Z^{1}, \\
\max_{|\gamma |\leq m}|\partial ^{\gamma }\zeta_t^{2}\left( x,z\right) |&
\leq K_{t}^{2}(z),\;\;\forall z\in Z^{2},
\end{align*}%
where $K_{t}^{1}$ (resp. $K_{t}^{2})$ are $\mathcal{P}_{T}\otimes \mathcal{Z}%
^{1}\,$(resp. $\mathcal{P}_{T}\otimes \mathcal{Z}^{1})$ -measurable
processes satisfying 
\begin{equation*}
\sup_{z\in Z^{k}}K_{t}^{k}(z)+\int_{Z^{1}}K_{t}^{1}(z)^{\beta }\pi
_{t}^{1}(dz)+\int_{Z^{2}}K_{t}^{2}(z)^{2}\pi ^{2}(dz)\leq N_{0}.
\end{equation*}

\item There is a constant $\eta <1$ such that for each $k\in \{1,2\}$ on the
set all $(\omega ,t,x,z)\in \Omega \times \lbrack 0,T]\times \mathbf{R}%
^{d_{1}}\times Z^{k}$ in which $|\nabla \zeta_t^{k}(x,z)|>\eta $, 
\begin{equation}
\left| \left( I_{d_{1}}+\nabla \zeta_t^{k}(x,z)\right) ^{-1}\right| \leq
N_{0}.  \label{cond:thetagradientinverse}
\end{equation}

\item For each $k\in \{1,2\}$ and all $(\omega ,t,x,z)\in \Omega \times
\lbrack 0,T]\times \mathbf{R}^{d_{1}}\times Z^{k}$, the derivatives in $x$
of the random field $\rho _{t}^{k}(z)$ up to order $m$ exist, and for all $%
x\in \mathbf{R}^{d_{1}}$, 
\begin{align*}
\max_{|\gamma |\leq m}|\partial ^{\gamma }\rho _{t}^{1}(x,z)|+\max_{|\gamma
|=m}\left[ \rho _{t}^{1}\left( \cdot ,z\right) \right] _{\frac{\beta }{2}}&
\leq l_{t}^{1}(z), \\
\max_{|\gamma |\leq m}|\partial ^{\gamma }\rho _{t}^{2}(x,z)|& \leq
l_{t}^{2}\left( z\right) ,
\end{align*}%
where $l^{1}$ (resp. $l^{2}$) is $\mathcal{P}_{T}\otimes \mathcal{Z}^{1}$
(resp. $\mathcal{P}_{T}\otimes \mathcal{Z}^{2}$) -measurable function
satisfying 
\begin{equation*}
\int_{Z^{1}}l_{t}^{1}(z)^{2}\pi _{t}^{1}(dz)+\int_{Z^{2}}l_{t}^{2}(z)^{2}\pi
^{2}(dz)\leq N_{0}.
\end{equation*}
\end{enumerate}
\end{assumption}

Let $L^{2}=L^{2}(\mathbf{R}^{d_{1}},\mathcal{B}(\mathbf{R}^{d_{1}}),\nu ;%
\mathbf{R})$, where $\nu $ (differential is denoted by $dx$) is the Lebesgue
measure. Let $\mathcal{S}=\mathcal{S}(\mathbf{R}^{d_{1}})$ be the Schwartz
space of rapidly decreasing functions on $\mathbf{R}^{d_{1}}$. The Fourier
transform of an element $v\in \mathcal{S}$ is defined by 
\begin{equation*}
\hat{v}(\xi )=\mathcal{F}v(\xi )=\int_{\mathbf{R}^{d_{1}}}v(x)e^{-i2\pi \xi
\cdot x}dx,\;\;\xi \in \mathbf{R}^{d_{1}}.
\end{equation*}%
We denote by $\mathcal{F}^{-1}$ its inverse. Denote the space of tempered
distributions by $\mathcal{S}^{\prime }$, the dual of $\mathcal{S}$.

Let $\Delta :=\sum_{i=1}^{d_{1}}\partial _{i}^{2}$ be the Laplace operator
on $\mathbf{R}^{d_{1}}$. For $\alpha \in \mathbf{R}$, we define the Sobolev
scale 
\begin{align*}
H^{\alpha }(\mathbf{R}^{d_{1}};\mathbf{R}^{d})& =\left\{ v=(v^{i})_{1\leq
i\leq d}:v^{i}\in \mathcal{S}^{\prime }\;\;\mathnormal{and}\;\;\left( 1+4\pi
^{2}\left| \xi \right| ^{2}\right) ^{\alpha /2}\hat{v}^{i}\in L^{2}(\mathbf{R%
}^{d_{1}}),\;\;\forall i\in \{1,\ldots ,d\}\right\} \\
& =\left\{ v=(v^{i})_{1\leq i\leq d}:v^{i}\in \mathcal{S}^{\prime }\;\;%
\mathnormal{and}\;\;(I-\Delta )^{\frac{\alpha }{2}}v^{i}\in L^{2}(\mathbf{R}%
^{d_{1}}),\;\;\forall i\in \{1,\ldots ,d\}\right\}
\end{align*}%
with the norm and inner product given by 
\begin{equation*}
\| v\| _{\alpha ,d}=\left( \sum_{i=1}^{d}\left| \left( 1+4\pi ^{2}\left| \xi
\right| ^{2}\right) ^{\alpha /2}\hat{v}^{i}\right| _{L^{2}}^{2}\right)
^{1/2}=\left( \sum_{i=1}^{d}\left| \left( I-\Delta \right) ^{\alpha
/2}v^{i}\right| _{L^{2}}^{2}\right) ^{1/2}
\end{equation*}%
and%
\begin{equation*}
\left( v,u\right) _{\alpha ,d}=\sum_{i=1}^{d}\left( \left( I-\Delta \right)
^{\alpha /2}v^{i},\left( I-\Delta \right) ^{\alpha /2}u^{i}\right)
_{L^{2}},\;\;\forall u,v\in H^{\alpha }(\mathbf{R}^{d_{1}};\mathbf{R}^{d}),
\end{equation*}%
where 
\begin{equation*}
\left( I-\Delta \right) ^{\alpha /2}v^{i}=\mathcal{F}^{-1}\left( \left(
1+4\pi ^{2}\left| \xi \right| ^{2}\right) ^{\alpha /2}\hat{v}^{i}\right) .
\end{equation*}%
It is well-known that $C_{c}^{\infty }(\mathbf{R}^{d_{1}},\mathbf{R}^{d})$
is dense in $H^{\alpha }(\mathbf{R}^{d_{1}};\mathbf{R}^{d})$ for each $%
\alpha \in \mathbf{R}$. For $v\in H^1(\mathbf{R}^{d_1},\mathbf{R}^d)$ and $%
u\in H^{-1}(\mathbf{R}^{d_1};\mathbf{R}^{d})$, we let 
\begin{equation*}
\langle v, u\rangle_{1,d}=(\Lambda^1v, \Lambda^{-1}u)_{0,d},
\end{equation*}
and identify the dual of $H^1(\mathbf{R}^{d_1};\mathbf{R}^d)$ with $H^{-1}(%
\mathbf{R}^{d_1};\mathbf{R}^{d})$ through this bilinear form. Moreover, all
of the properties imposed in Section \ref{sec:DegenerateLinearStochasticEv}
for the abstract family of spaces $(H^{\alpha })_{\alpha \in \mathbf{R}}$
and operators $(\Lambda ^{\alpha })_{\alpha \in \mathbf{R}}$ holds for the
Sobolev scale. We refer the reader to \cite{Tr10} for more details about the
Sobolev scale (see the references therein as well).

For each $\alpha \in \mathbf{R}$, let $\mathbf{H}^{\alpha}( \mathbf{R}^{d_1};%
\mathbf{R}^{d};\mathcal{F}_{0}) $ be the space of all $\mathcal{F}_{0}$%
-measurable $H^{\alpha }(\mathbf{R}^{d_{1}};\mathbf{R}^{d})$-valued random
variables $\tilde{\varphi} $ satisfying $\mathbf{E}\left[\| \tilde{\varphi}%
\| _{\alpha }^{2}\right] <\infty .$

Let $\mathbf{H}^{\alpha }(\mathbf{R}^{d_{1}};\mathbf{R}^{d_{2}})$ be the
space of all $H^{\alpha }(\mathbf{R}^{d_{1}};\mathbf{R}^{d_{2}})$-valued $%
\mathcal{R}_{T}$-measurable processes $f:\Omega \times \lbrack
0,T]\rightarrow H^{\alpha }(\mathbf{R}^{d_{1}};\mathbf{R}^{d_{2}})$ such
that 
\begin{equation*}
\mathbf{E}\int_{0}^{T}\|f_{t}\|_{\alpha }^{2}dV_{t}<\infty .
\end{equation*}

Let $\mathbf{H}^{\alpha}(\mathbf{R}^{d_1};\ell_{2}(\mathbf{R}^{d}))$ be the
space of all sequences of $H^{\alpha }(\mathbf{R}^{d_{1}};\mathbf{R}^{d})$%
-valued $\mathcal{P}_{T}$-measurable processes $\tilde{g}=( \tilde{g}%
^{\varrho })_{\varrho\in \mathbf{N}}$, $\tilde{g}^{\varrho}: \Omega\times
[0,T]\rightarrow H^{\alpha }(\mathbf{R}^{d_{1}};\mathbf{R}^{d})$, satisfying

\begin{equation*}
\mathbf{E}\int_{0}^{T}\| \tilde{g}_{t}\| _{\alpha }^{2}dV_{t}=\mathbf{E}%
\int_{0}^{T}\sum_{\rho \in \mathbf{N}}\| \tilde{g}_{t}^{\varrho }\| _{\alpha
}^{2}dV_{t}<\infty \text{.}
\end{equation*}%
Let $\mathbf{H}^{\alpha }(\mathbf{R}^{d_{1}};\mathbf{R}^{d};\pi ^{1})$ be
the space of all $H^{\alpha }(\mathbf{R}^{d_{1}};\mathbf{R}^{d})$-valued $%
\mathcal{P}_{T}\otimes \mathcal{Z}^{1}$-measurable processes $\tilde{h}%
:\Omega \times \lbrack 0,T]\times \mathcal{Z}^{1}\rightarrow H^{\alpha }(%
\mathbf{R}^{d_{1}};\mathbf{R}^{d})$ such that 
\begin{equation*}
\mathbf{E}\int_{0}^{T}\int_{Z^{1}}\| \tilde{h}_{t}\left( z\right) \|
_{\alpha }^{2}\pi _{t}^{1}(dz)dV_{t}<\infty ..
\end{equation*}

For each $\alpha \in \mathbf{R}$, we set $H^{\alpha }=H^{\alpha }(\mathbf{R}%
^{d_{1}};\mathbf{R}^{d_{2}})$, $\mathbf{H}^{\alpha }(\mathcal{F}_{0})$, $%
\mathbf{H}^{\alpha }=\mathbf{H}^{\alpha }(\mathbf{R}^{d_{1}};\mathbf{R}%
^{d_{2}})$, $\mathbf{H}^{\alpha }(\mathbf{R}^{d_{1}};\ell _{2}(\mathbf{R}%
^{d_{2}}))=\mathbf{H}^{\alpha }(\ell _{2})$, $\mathbf{H}^{\alpha }(\pi ^{1})=%
\mathbf{H}^{\alpha }(\mathbf{R}^{d_{1}};\mathbf{R}^{d};\pi ^{1})$, and $\|
\cdot \| _{\alpha }=\| \cdot \| _{\alpha ,d_{2}}$, $(\cdot ,\cdot )_{\alpha
}=(\cdot ,\cdot )_{\alpha ,d_{2}},$ $\langle \cdot,\cdot\rangle_{1}=\langle
\cdot,\cdot\rangle_{1,d_2}.$ We also set $C_{c}^{\infty }=C_{c}^{\infty }(%
\mathbf{R}^{d_{1}};\mathbf{R}^{d})$.

\begin{definition}
\label{def:solutionside}Let $\varphi \in \mathbf{H}^{0}(\mathcal{F}_{0})$, $%
f\in \mathbf{H}^{-1},g\in \mathbf{H}^{0}(\ell _{2}),$ and $h\in \mathbf{H}%
^{0}(\pi ^{1}).$ An $H^{0}$-valued strongly c\`{a}dl\`{a}g process $%
u=(u_{t})_{t\leq T}$ is said to be a solution of the SIDE \eqref{eq:SIDE} if 
$u\in L^{2}(\Omega \times \lbrack 0,T],\mathcal{O}_{T},dV_{t}d\mathbf{P}%
;H^{1})$ and $\mathbf{P}$-a.s.\ for all $t\in \lbrack 0,T]$,%
\begin{align*}
u_{t}& \overset{H^{-1}}{=}\varphi +\int_{0}^{t}\left( (\mathcal{L}_{s}^{1;l}+%
\mathcal{L}_{s}^{2;l})u_{s}+b_{s}^{i}\partial _{i}u_{s}^{l}+c_{s}^{l\bar{l}%
}u_{s}^{\bar{l}}+f_{s}^{l}\right) dV_{s}+\int_{0}^{t}\left( \mathcal{N}%
_{s}^{l\varrho }u_{s}+g_{s}^{l\varrho }\right) dw_{s}^{\varrho } \\
& \quad +\int_{0}^{t}\int_{Z^{1}}\left( \mathcal{I}_{s,z}^{l}u_{s-}^{\bar{l}%
}+h_{s}^{l}(z)\right) \tilde{\eta}(ds,dz),
\end{align*}%
where $\overset{H^{-1}}{=}$ indicates that the equality holds in the $H^{-1}$%
. That is, $\mathbf{P}$-a.s.\ for all $t\in \lbrack 0,T]$ and $v\in H^{1}$, 
\begin{align*}
(v,u_{t})_{0}& =(v,u_{0})+\int_{0}^{t}\langle v,(\mathcal{L}_{s}^{1}+%
\mathcal{L}_{s}^{2})u_{s}+b_{s}^{i}\partial
_{i}u_{s}+c_{s}u_{s}+f_{s}\rangle _{1}dV_{s} \\
& \quad +\int_{0}^{t}\left( v,\left( \mathcal{N}_{s}^{l\varrho
}u_{s}+g_{s}^{l\varrho }\right) \right) _{0}dw_{s}^{\varrho
}+\int_{0}^{t}\int_{Z^{1}}\left( v,\left( \mathcal{I}_{s,z}^{l}u_{s-}^{\bar{l%
}}+h_{s}^{l}(z)\right) \right) \tilde{\eta}(ds,dz).
\end{align*}
\end{definition}

The main result of this section is the following statement.

\begin{theorem}
\label{thm:degeneratespideexist} Let Assumption \ref{asm:coeffmain}$(m,d_2 )$
hold for $m\in \mathbf{N}$ and a real number $\beta \in \lbrack 0,2] $. Then
for every $\varphi \in \mathbf{H}^{m}(\mathcal{F}_{0})$, $f\in \mathbf{H}^{m}
$, $g\in \mathbf{H}^{m+1}(\ell _{2}),h\in \mathbf{H}^{m+\frac{\beta }{2}%
}(\pi ^{1}),$ and there exists a unique solution $u=(u_{t})_{t\leq T}$ of %
\eqref{eq:SIDE} that is weakly c\`{a}dl\`{a}g as an $H^{m}$-valued process
and strongly c\`{a}dl\`{a}g as an $H^{\alpha ^{\prime }}$-valued process for
any $\alpha ^{\prime }<m$. Moreover, there is a constant $%
N=N(d_{1},d_{2},N_{0},m,\eta ,\beta )$ such that 
\begin{equation*}
\mathbf{E}\left[ \underset{t\leq T}{\sup }\| u_{t}\| _{m}^{2}\right] \leq N%
\mathbf{E}\left[ \| \varphi \| _{m}^{2}+\int_{0}^{T}\left( \| f_{t}\|
_{m}^{2}+\| g_{t}\| _{m+1}^{2}+\int_{Z^{1}}\| h_{t}(z)\| _{m+\frac{\beta }{2}%
}^{2}\pi _{t}^{1}(dz)\right) dV_{t}\right] .
\end{equation*}
\end{theorem}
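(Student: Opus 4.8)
The plan is to obtain Theorem~\ref{thm:degeneratespideexist} as an application of the abstract result Theorem~\ref{th:Degenerate}, specialized to the Sobolev scale $(H^{\alpha})_{\alpha\in\mathbf{R}}$ with $\Lambda^{\alpha}=(I-\Delta)^{\alpha/2}$, taking $\mu=1$ and $\alpha=m$. First I would recast \eqref{eq:SIDE} as an equation of the form \eqref{eq:SES} in the triple $(H^{-1},H^{0},H^{1})$: the operator $\mathcal{L}$ collects all the $dV_{t}$-coefficients (the second-order parts of $\mathcal{L}^{1},\mathcal{L}^{2}$, the drift $b^{i}\partial_{i}$, and the zero-order term $c$), and a single cylindrical martingale $M$ is assembled from the continuous sequence $(w^{\varrho})_{\varrho\in\mathbf{N}}$ and the compensated measure $\tilde{\eta}$. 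Concretely, the fibre $\mathcal{H}_{t}^{\ast}$ is taken to be $\ell_{2}\oplus L^{2}(Z^{1},\mathcal{Z}^{1},\pi_{t}^{1})$, the first summand recording the (uncorrelated, $dV_{t}$-bracketed) martingales $w^{\varrho}$ and the second the jump measure, so that the martingale operator $\mathcal{M}$ acts by $v\mapsto\big((\mathcal{N}_{t}^{\cdot\varrho}v)_{\varrho},\,(z\mapsto\mathcal{I}_{t,z}v)\big)$ and $\|\mathcal{M}_{t}v\|_{L_{2}(H^{\lambda},\mathcal{H}_{t}^{\ast})}^{2}=\sum_{\varrho}\|\mathcal{N}_{t}^{\cdot\varrho}v\|_{\lambda}^{2}+\int_{Z^{1}}\|\mathcal{I}_{t,z}v\|_{\lambda}^{2}\pi_{t}^{1}(dz)$. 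Under this dictionary, Definition~\ref{def:solutionside} is exactly Definition~\ref{def:solution}.

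Next I would check the measurability, growth, and integrability hypotheses in Assumption~\ref{asm:mainSES}(ii),(iii) for $\lambda\in\{0,m\}$. After differentiating up to order $m$ and using that $(I-\Delta)^{1/2}$ is an isomorphism of the scale, these reduce to the uniform bounds on the coefficients and their $x$-derivatives furnished by Assumption~\ref{asm:coeffmain}$(m,d_{2})$; the verification is a routine but lengthy sequence of Sobolev and H\"older estimates. The dominating process can be taken as $\bar{f}_{t}=N\big(\|f_{t}\|_{m}^{2}+\|g_{t}\|_{m+1}^{2}+\int_{Z^{1}}\|h_{t}(z)\|_{m+\frac{\beta}{2}}^{2}\pi_{t}^{1}(dz)\big)$, which is integrable precisely because $f\in\mathbf{H}^{m}$, $g\in\mathbf{H}^{m+1}(\ell_{2})$, and $h\in\mathbf{H}^{m+\frac{\beta}{2}}(\pi^{1})$.

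The heart of the argument is the degenerate coercivity estimate of Assumption~\ref{asm:mainSES}(i), together with its pathwise/bracket counterpart in Assumption~\ref{asm:specSES}(i),(ii) that is needed for weak c\`{a}dl\`{a}g-ness in $H^{m}$. Since no lower bound on $\sigma\sigma^{\top}$ is assumed, the requisite control can only arise from an exact cancellation between $2(\Lambda v,\Lambda^{-1}\mathcal{L}_{t}v)_{\lambda}$ and $\|\mathcal{M}_{t}v\|_{L_{2}(H^{\lambda},\mathcal{H}_{t}^{\ast})}^{2}$. For the continuous part I would integrate by parts: testing the principal term $\tfrac{1}{2}\sigma^{1;i\varrho}\sigma^{1;j\varrho}\partial_{ij}$ against $v$ produces $-\tfrac{1}{2}\sum_{\varrho}\|\sigma^{1;i\varrho}\partial_{i}v\|_{0}^{2}$ up to lower-order terms, which absorbs $\sum_{\varrho}\|\mathcal{N}_{t}^{\cdot\varrho}v\|_{0}^{2}$. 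For the jump part the analogous cancellation between the compensator in $\mathcal{L}^{2}$ and $\int_{Z^{1}}\|\mathcal{I}_{t,z}v\|_{0}^{2}\pi_{t}^{1}(dz)$ is obtained via the change of variables $x\mapsto x+\zeta_{t}^{1}(x,z)$, whose Jacobian is controlled through the nondegeneracy condition \eqref{cond:thetagradientinverse}; this is the weighted $L^{2}$-estimate already underlying the uniqueness proof in \cite{LeMi14}. For $\lambda=m$ I would commute $\partial^{\gamma}$, $|\gamma|\le m$, through the operators: the cross terms with the free terms must be integrated by parts to avoid the uncontrollable $\|v\|_{m+1}$, which is exactly why $g$ and $h$ are assumed one derivative (resp.\ $\tfrac{\beta}{2}$ derivatives) smoother than $v$; and the most dangerous commutators, where all $m$ derivatives fall on $\zeta^{1}$ or $\rho^{1}$, are tamed after the change of variables using the $\tfrac{\beta}{2}$-H\"older continuity of $\partial^{m}\zeta^{1},\partial^{m}\rho^{1}$ and the moment $\int_{Z^{1}}K_{t}^{1}(z)^{\beta}\pi_{t}^{1}(dz)$ from Assumption~\ref{asm:coeffmain}(ii),(iv). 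I expect this top-order jump estimate to be the main obstacle, since it marries the commutator bookkeeping with a change-of-variables argument that must be uniform in $z$.

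Finally, with Assumptions~\ref{asm:mainSES}$(\lambda,1)$ and \ref{asm:specSES}$(\lambda,1)$ verified for $\lambda\in\{0,m\}$ and $\mathbf{E}[\|\varphi\|_{m}^{2}]<\infty$, Theorem~\ref{th:Degenerate} delivers a unique solution that is strongly c\`{a}dl\`{a}g in $H^{\alpha'}$ for every $\alpha'<m$, weakly c\`{a}dl\`{a}g in $H^{m}$, and satisfies the asserted a priori bound once $\bar{f}_{t}+\bar{g}_{t}^{2}$ is rewritten in terms of $\|f_{t}\|_{m}^{2}$, $\|g_{t}\|_{m+1}^{2}$, and $\int_{Z^{1}}\|h_{t}(z)\|_{m+\frac{\beta}{2}}^{2}\pi_{t}^{1}(dz)$.
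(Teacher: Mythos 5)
Your overall strategy coincides with the paper's: recast \eqref{eq:SIDE} as \eqref{eq:SES} driven by a cylindrical martingale (the paper invokes Examples 2.3--2.4 of \cite{MiRo99} for precisely your $\ell_{2}\oplus L^{2}(\pi_{t}^{1})$ dictionary), take $\mu=1$, $\alpha=m$, and verify Assumptions \ref{asm:mainSES}$(\lambda,1)$ and \ref{asm:specSES}$(\lambda,1)$ for $\lambda\in\{0,m\}$. Your $\lambda=0$ estimates (divergence theorem, change of variables $x\mapsto x+\zeta_{t}^{1}(x,z)$ with Jacobian control via \eqref{cond:thetagradientinverse}, integration by parts against $g$, and the fractional estimate for $h(\tilde{\zeta}^{1;-1})-h$) are exactly the content of Lemmas \ref{lem:GrowthSIDE}, \ref{lem:CoercvitySIDE}, \ref{lem:SpecialCoercivitySIDE} and \ref{lem:fractionalderviativeestimategrowth}, and your choice of $\bar{f}_{t}$ matches the paper's.

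Where you genuinely diverge is the passage to $\lambda=m$. You propose commuting $\partial^{\gamma}$, $|\gamma|\le m$, through the operators and estimating the commutators inside the coercivity inequality. The paper instead introduces $\mathcal{D}v=(v,\partial_{1}v,\dots,\partial_{d_{1}}v)$ and extended operators $\mathcal{E}^{m}(\mathcal{L})$, $\mathcal{E}^{m}(\mathcal{N})$, $\mathcal{E}^{m}(\mathcal{I}_{z})$ having the \emph{same} $\sigma^{k},b,\zeta^{k}$ but enlarged matrix coefficients \eqref{eq:upsextended}--\eqref{eq:cextended}, so that $\mathcal{D}^{m}[\mathcal{L}v]=\mathcal{E}^{m}(\mathcal{L})\mathcal{D}^{m}v$, etc.\ as in \eqref{eq:DnofOp}; one then checks by induction that the extended coefficients satisfy the base Assumption \ref{asm:proofSIDEassumpzero}$(d_{2}\bar{d}_{1}^{m})$, so the $\lambda=m$ case \emph{is} the $\lambda=0$ case for the enlarged system and no coercivity computation is repeated. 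This buys more than bookkeeping, and it is the point your sketch underplays: the truly dangerous commutators are not those where all $m$ derivatives fall on $\zeta^{1}$ or $\rho^{1}$ (those are indeed handled by the H\"{o}lder and moment hypotheses, as you say), but those which retain top-order derivatives of $v$ --- for instance $\partial_{k}(\sigma^{1;i\varrho}\sigma^{1;j\varrho})\partial_{ij}\partial^{\gamma-e_{k}}v$, carrying $m+1$ derivatives of $v$, and the chain-rule terms of the form $(\partial^{\gamma}v)(\tilde{\zeta}^{1})\,\partial\zeta^{1}$ arising from $\partial^{\gamma}[v(\tilde{\zeta}^{1})]$. No amount of coefficient regularity bounds these by $\|v\|_{m}^{2}$; they must be fed back into the same degenerate cancellation, pairing against the cross terms produced when $\|\partial^{\gamma}(\mathcal{N}v)\|_{0}^{2}$ and $\int_{Z^{1}}\|\partial^{\gamma}(\mathcal{I}_{z}v)\|_{0}^{2}\pi^{1}(dz)$ are expanded. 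The extension device does this automatically: such terms reappear as $\upsilon$-type and $\rho$-type coefficients (the summands $1_{j\geq1}\partial_{j}\sigma^{k;\bar{j}\varrho}\delta_{l\bar{l}}$ in \eqref{eq:upsextended} and $(\delta_{l\bar{l}}+\rho^{k;l\bar{l}})\partial_{j}\zeta^{k;\bar{j}}$ in \eqref{eq:rhoextended}) of a system to which the $\lambda=0$ cancellation applies verbatim. Your direct route can be completed, but only by reconstructing this pairing by hand; as written, the sketch attributes the top-order control to coefficient smoothness and the free-term integrations by parts, which is not where it comes from.
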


\subsection{Proof of Theorem \protect\ref{thm:degeneratespideexist}}

By \cite{MiRo99} (see Examples 2.3-2.4), the stochastic integrals in %
\eqref{eq:SIDE} can be written as stochastic integrals with respect to a
cylindrical martingale. We will apply Theorem \ref{th:Degenerate} to %
\eqref{eq:SIDE} with $\alpha =m$ and $\mu =1$ by checking that Assumptions %
\ref{asm:mainSES}$(\lambda,1)$ and \ref{asm:specSES}$(\lambda,1)$ for $%
\lambda\in \{0,m\}$ are implied by Assumption \ref{asm:coeffmain}$(m,d_2)$.
We start with $\lambda =0$ as our base case and show that $\lambda =m$ can
be reduced to it.

We introduce our base assumption for $\beta \in \left[ 0,2\right] .$ {}

\begin{assumption}[$d_{2}$]
\label{asm:proofSIDEassumpzero}Let $N_{0}$ be a positive constant. \ 

\begin{enumerate}
\item For all $(\omega ,t,x)\in \Omega \times \lbrack 0,T]\times \mathbf{R}%
^{d_{1}}$, the derivatives in $x$ of the random fields $b_{t},\sigma
_{t}^{1},\sigma _{t}^{2},$ and $\func{div}\sigma _{t}^{1}$ exist, and for
all $x\in \mathbf{R}^{d_{1}}$, 
\begin{equation*}
|\nabla \func{div}\sigma _{t}^{1}\left( x\right) |+|\sigma _{t}^{k}\left(
x\right) |+|\mathnormal{\nabla }\sigma _{t}^{k}(x)|+|\mathnormal{\func{div}}%
b_{t}(x)|+|c_{t}(x)|+|\upsilon _{t,\mathnormal{sym}}^{2}\left( x\right)
|+|\nabla \upsilon _{t}^{1}\left( x\right) |\leq N_{0}.
\end{equation*}

\item For each $k\in \{1,2\}$ and all $(\omega ,t,x,z)\in \Omega \times
\lbrack 0,T]\times \mathbf{R}^{d_{1}}\times Z^{k}$, the derivatives in $x$
of the random fields $\zeta_t^{k}(z)$ exist, and for all $x\in \mathbf{R}%
^{d_{1}}$, 
\begin{gather*}
|\zeta_t^{1}(x,z)|\leq K_{t}^{1}\left( z\right) ,\quad |\nabla
\zeta_t^{1}\left( x,z\right) |\leq \bar{K}_{t}^{1}\left( z\right) ,\quad %
\left[ \func{div}\zeta_t^{1}\left( \cdot ,z\right) \right] _{\frac{\beta }{2}%
}\leq \tilde{K}_{t}^{1}\left( z\right) ,\;\;\forall z\in Z^{1}, \\
|\zeta_t^{2}(x,z)|\leq K_{t}^{2}\left( z\right) ,\quad |\nabla
\zeta_t^{2}\left( x,z\right) |\leq \bar{K}_{t}^{2}\left( z\right)
,\;\;\forall z\in Z^{2},
\end{gather*}%
where $K_{t}^{1},\bar{K}_{t}^{1},\tilde{K}_{t}^{1}$ (resp. $K_{t}^{2},\bar{K}%
_{t}^{2})$ are $\mathcal{P}_{T}\otimes \mathcal{Z}^{1}\,$(resp. $\mathcal{P}%
_{T}\otimes \mathcal{Z}^{2})$ -measurable processes satisfying 
\begin{align*}
\sup_{z\in Z^{1}}\left( K_{t}^{1}(z)+\bar{K}_{t}^{1}(z)+\tilde{K}%
_{t}^{1}\left( z\right) \right) +\int_{Z^{1}}\left( K_{t}^{1}(z)^{\beta }+%
\bar{K}_{t}^{1}\left( z\right) ^{2}+\tilde{K}_{t}^{1}\left( z\right)
^{2}\right) \pi _{t}^{1}(dz)& \leq N_{0}, \\
\sup_{z\in Z^{2}}\left( K_{t}^{2}(z)+\bar{K}_{t}^{2}(z)\right) +\int_{Z^{2}}%
\bar{K}_{t}^{2}(z)^{2}\pi ^{2}(dz)& \leq N_{0}.
\end{align*}

\item There is a constant $\eta <1$ such that for each $k\in \{1,2\}$ on the
set all $(\omega ,t,x,z)\in \Omega \times \lbrack 0,T]\times \mathbf{R}%
^{d_{1}}\times Z^{k}$ in which $|\nabla \zeta_t^{k}(x,z)|>\eta $, 
\begin{equation*}
\left| \left( I_{d_{1}}+\nabla \zeta_t^{k}(x,z)\right) ^{-1}\right| \leq
N_{0}.
\end{equation*}

\item For each $k\in \{1,2\}$ and all $(\omega ,t,x,z)\in \Omega \times
\lbrack 0,T]\times \mathbf{R}^{d_{1}}\times Z^{k}$, $|\rho
_{t}^{k}(x,z)|\leq l_{t}^{k}(z)$, and for all $(\omega ,t,z)\in \Omega
\times \lbrack 0,T]\times Z^{1}$, $\left[ \rho _{t,\mathnormal{sym}%
}^{1}(\cdot ,z)\right] _{\frac{\beta }{2}}\leq \tilde{l}_{t}^{1}\left(
z\right) ,$ where $l^{k}$ (resp., $\tilde{l}^{1}$) is a $\mathcal{P}%
_{T}\otimes \mathcal{Z}^{k}$-measurable (resp. $\mathcal{P}_{T}\otimes 
\mathcal{Z}^{1}$-measurable) functions satisfying 
\begin{equation*}
\int_{Z^{1}}\left( l_{t}^{1}(z)^{2}+\tilde{l}_{t}^{1}\left( z\right)
^{2}\right) \pi ^{1}(dz)+\int_{Z^{2}}l_{t}^{2}(z)^{2}\pi ^{2}(dz)\leq N_{0}.
\end{equation*}
\end{enumerate}
\end{assumption}

Note that Assumption \ref{asm:proofSIDEassumpzero}$(d_{2})$ is weaker than
Assumption \ref{asm:coeffmain}$(0,d_{2}).$

Let us make the following convention for the remainder of this section. If
we do not specify to which space the parameters $\omega ,t,x,y,$ and $z$
belong, then we mean $\omega \in \Omega $, $t\in \lbrack 0,T]$, $x\in 
\mathbf{R}^{d_{1}}$, and $z\in Z^{k}$. Moreover, unless otherwise specified,
all statements hold for all $\omega ,t,x,y,$ and $z$ independent of any
constant $N$ introduced is independent of $\omega ,t,x,y,$ and $z$. We will
also drop the dependence of processes $t,x,$ and $z$ when we feel it will
not obscure our argument. Lastly, all derivatives and H\"{o}lder norms are
taken with respect to $x\in \mathbf{R}^{d_{1}}$.

\begin{remark}
\label{rem:PropertiesofH} Let Assumption \ref{asm:proofSIDEassumpzero}$%
\left( d_{2}\right) $ hold. For each $k$ and $\theta \in \lbrack 0,1],$ on
the set all $\omega ,t,$ and $z$ in which $|K_{t}^{k}(z)|\leq \eta $, we
have 
\begin{equation*}
\left| (I_{d_{1}}+\theta \nabla \zeta_t^{k}(x,z))^{-1}\right| \leq \frac{1}{%
1-\theta \eta }.
\end{equation*}%
Moreover, for each $k$ and all $\omega ,t,$ and $z$, we have 
\begin{equation*}
\left| (I_{d_{1}}+\nabla \zeta_t^{k}(x,z))^{-1}\right| \leq \max \left( 
\frac{1}{1-\theta \eta },N_{0}\right) .
\end{equation*}%
Therefore, by Hadamard's theorem (see, e.g., Theorem 0.2 in \cite{DeGoZa94}
or 51.5 in \cite{Be77}):

\begin{itemize}
\item for each $k$ and $\theta \in \lbrack 0,1],$ on the set all $\omega ,t,$
and $z$ in which $|K_{t}^{k}(z)|\leq \eta $, the mapping 
\begin{equation*}
\tilde{\zeta}_{t,\theta }^{k}(x,z):=x+\theta \zeta_t^{k}(x,z)
\end{equation*}%
is a global diffeomorphism in $x$;

\item for each $k$ and all $\omega ,t,$ and $z$, the mapping 
\begin{equation*}
\tilde{\zeta}_{t}^{k}(x,z)=\tilde{\zeta}_{t,1}^{k}(x,z)=x+\zeta_t^{k}(x,z)
\end{equation*}%
is a global diffeomorphism in $x$.
\end{itemize}

When inverse of the mapping $x\mapsto \tilde{\zeta}_{t,\theta }^{k}(x,z)$
exists, we denote it by 
\begin{equation*}
\tilde{\zeta}_{t,\theta }^{k;-1}(x,z)=\left( \tilde{\zeta}_{t,\theta
}^{k;-1;j}(x,z)\right) _{1\leq j\leq d_{1}}
\end{equation*}%
and note that 
\begin{equation*}
\tilde{\zeta}_{t,\theta }^{k;-1}(x,z)=x-\theta \zeta_t^{k}(\tilde{\zeta}%
_{t,\theta }^{k;-1}(x,z),z).
\end{equation*}%
Furthermore, for each $k$ and $\theta \in \lbrack 0,1],$ on the set all $%
\omega ,t,$ and $z$ in which $|K_{t}^{k}(z)|\leq \eta $, there is a constant 
$N=N(d_{1},N_{0},\eta )$ such that 
\begin{equation}
|\nabla \tilde{\zeta}_{t,\theta }^{k;-1}(x,z)|\leq N
\label{ineq:BoundtildHtheta}
\end{equation}%
and for each $k$ and all $\omega ,t,$ and $z$, 
\begin{equation}
|\nabla \tilde{\zeta}_{t}^{k;-1}(x,z)|\leq N.  \label{ineq:BoundtildH}
\end{equation}%
Using simple properties of the determinant, we can easily show that there is
a constant $N=N(d_{1})$ such that for an arbitrary real-valued $d_{1}\times
d_{1}$ matrix $A$, 
\begin{equation*}
|\det (I_{d_{1}}+A)-1|\leq N|A|\quad \mathnormal{and}\quad |\det
(I_{d_{1}}+A)-1-\func{tr}A|\leq N|A|^{2}.
\end{equation*}%
Thus, there is a constant $N=N(d_{1},N_{0},\eta )$ such that 
\begin{equation}
|\det \nabla \tilde{\zeta}^{k;-1}-1|=\left| \det \left( I_{d}-\zeta_t^{k}(%
\tilde{\zeta}_{t}^{k;-1})\right) -1\right| \leq N|\nabla \zeta^{k}(\tilde{%
\zeta}^{k;-1})|  \label{ineq:detHminus1}
\end{equation}%
and 
\begin{equation*}
\left| \det \nabla \tilde{\zeta}^{k;-1}-1+\func{div}\left( \zeta^{k}(\tilde{%
\zeta}^{k;-1})\right) \right| \leq N|\nabla \zeta^{k}(\tilde{\zeta}%
^{k;-1})|^{2}.
\end{equation*}%
Since $\partial _{l}\tilde{\zeta}^{k;-1;j}=\delta _{lj}-\partial
_{m}\zeta^{k;j}(\tilde{\zeta}^{k;-1}\partial _{l}\tilde{\zeta}^{k;-1;m}),$
we have 
\begin{equation*}
|\func{div}F^{k}-\func{div}\zeta^{k}(\tilde{\zeta}^{k;-1})|=|\partial
_{j}\zeta^{k;l}(\tilde{\zeta}_{t}^{k;-1}(\partial _{l}\tilde{\zeta}%
^{k;-1;j}-\delta _{lj})|\leq N|\nabla \zeta^{k}(\tilde{\zeta}^{k;-1})|^{2},
\end{equation*}%
and thus 
\begin{equation}
\left| \det \nabla \tilde{\zeta}^{k;-1}-1+\func{div}\zeta^{k}(\tilde{\zeta}%
^{k;-1})\right| \leq N|\nabla \zeta^{k}(\tilde{\zeta}^{k;-1})|^{2}.
\label{ineq:detHminus1minustr}
\end{equation}
\end{remark}

In the following three lemmas, we will show that Assumptions \ref%
{asm:mainSES}$(\lambda ,1)$ and \ref{asm:specSES}$(\lambda ,1)$ for $\lambda
\in \{0,m\}$ hold under Assumption \ref{asm:proofSIDEassumpzero}$(\beta )$
for any $\beta \in \left[ 0,2\right] $. For each $l\in \{1,\ldots ,d_{2}\}$
and all $\phi \in C_{c}^{\infty }$, let 
\begin{gather*}
\mathcal{L}_{t}^{l}\phi =(\mathcal{L}_{t}^{1;l}+\mathcal{L}_{t}^{2;l})\phi
+b_{t}^{i}\partial _{i}\phi ^{l}+c_{t}^{l\bar{l}}\phi ^{\bar{l}}, \\
\mathcal{A}_{t}^{1;l}\phi =\frac{1}{2}\sigma _{t}^{1;i\varrho }\sigma
_{t}^{1;j\varrho }\partial _{ij}\phi ^{l}+\sigma _{t}^{1;i\varrho }\upsilon
_{t}^{1;l\bar{l}\varrho }\partial _{i}\phi ^{\bar{l}},\quad \mathnormal{and}%
\quad \mathcal{J}_{t}^{1}\phi =\mathcal{L}_{t}^{1}\phi -\mathcal{A}%
_{t}^{1}\phi .
\end{gather*}

\begin{lemma}
\label{lem:GrowthSIDE}Let Assumption \ref{asm:proofSIDEassumpzero}$(d_{2})$
hold. Then there is a constant $N=N(d_{1},d_{2},N_{0},\eta )$ such that for
all $(\omega ,t)\in \Omega \times \lbrack 0,T]$ and $v\in H^{1},$ 
\begin{gather*}
\| \mathcal{L}_{t}v\| _{-1}\leq N\| v\| _{1},\quad \| \mathcal{A}_{t}v\|
_{-1}\leq N\| v\| _{1},\quad \| \mathcal{J}_{t}^{1}v\| _{-1}\leq N\| v\|
_{1}, \\
\quad \| \mathcal{N}_{t}v\| _{0}\leq N\| \phi \| _{1},\quad \text{%
\textnormal{and}}\quad \int_{Z^{1}}\| \mathcal{I}_{t,z}v\| _{0}^{2}\pi
_{t}^{1}(dz)\leq N\| v\| _{1}^{2}.
\end{gather*}
\end{lemma}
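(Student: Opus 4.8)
The plan is to prove all five estimates first for $\phi\in C_c^\infty$ and then extend to $H^{1}$ by density. Two elementary facts about the Sobolev scale will be used throughout: multiplication by a coefficient that is bounded in $x$ is a bounded operator on $H^{0}=L^{2}$, and $\partial_i\colon H^{0}\to H^{-1}$ is bounded, so that $\|\partial_iF^i+G\|_{-1}\le N(\|F\|_0+\|G\|_0)$. Consequently, for the three $H^{-1}$-estimates (those for $\mathcal{L}$, $\mathcal{A}$, and $\mathcal{J}^{1}$) it is enough to exhibit, term by term, a representation of the form $\partial_iF^i+G$ with $\|F\|_0+\|G\|_0\le N\|\phi\|_1$.

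I would first dispose of the two genuinely $H^{0}$-valued operators. For $\mathcal{N}$, I expand $\|\mathcal{N}_t\phi\|_0^2=\sum_{\varrho}\|\sigma_t^{1;i\varrho}\partial_i\phi^l+\upsilon_t^{1;l\bar l\varrho}\phi^{\bar l}\|_{L^2}^2$ and use Cauchy--Schwarz in the index $\varrho$ together with the pointwise coefficient bounds of Assumption \ref{asm:proofSIDEassumpzero}(i) to obtain $N(\|\nabla\phi\|_{L^2}^2+\|\phi\|_{L^2}^2)=N\|\phi\|_1^2$. For $\mathcal{I}$, I split $\mathcal{I}_{t,z}\phi=\big(\phi(\cdot+\zeta^1)-\phi\big)+\rho^1\phi(\cdot+\zeta^1)$. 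The second summand is controlled by the change of variables $y=x+\zeta^1(x,z)$, which is a global diffeomorphism in $x$ with the Jacobian bound \eqref{ineq:detHminus1} of Remark \ref{rem:PropertiesofH}, giving $\|\rho^1\phi(\cdot+\zeta^1)\|_0^2\le N\,l_t^1(z)^2\|\phi\|_0^2$; the first summand I write as $\phi(x+\zeta^1)-\phi(x)=\int_0^1\zeta^{1;i}\partial_i\phi(x+\theta\zeta^1)\,d\theta$ and, after the same change of variables applied with $\theta\zeta^1$ (again a diffeomorphism by the Remark), bound by $N\,K_t^1(z)^2\|\phi\|_1^2$. Integrating against $\pi_t^1(dz)$ and using $\sup_z K^1\le N_0$ with $\int (K^1)^\beta\pi^1\le N_0$ (so that $\int (K^1)^2\pi^1<\infty$) and $\int (l^1)^2\pi^1\le N_0$ yields the claimed bound.

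For the $H^{-1}$-estimates, the local (differential) parts are routine. The principal term $\tfrac12\sigma^{k;i\varrho}\sigma^{k;j\varrho}\partial_{ij}\phi^l=\partial_i\big(\tfrac12\sigma^{k;i\varrho}\sigma^{k;j\varrho}\partial_j\phi^l\big)-\tfrac12\partial_i(\sigma^{k;i\varrho}\sigma^{k;j\varrho})\partial_j\phi^l$ is of the form $\partial_iF^i+G$ with $F,G$ controlled by $\|\phi\|_1$ via the bounds on $\sigma^k$ and $\nabla\sigma^k$; the first-order transport terms are treated similarly, integrating by parts so that only derivatives of coefficients that are bounded (such as $\func{div}b$, $\nabla\upsilon^1$, and $\upsilon^2_{\mathnormal{sym}}$) appear, and otherwise using the pointwise bounds of Assumption \ref{asm:proofSIDEassumpzero}(i). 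This handles $\mathcal{A}=\mathcal{A}^1$ and the local part of $\mathcal{L}$. The nonlocal parts of $\mathcal{L}$ split into a $\rho$-contribution $\int\rho^k\big(\phi(\cdot+\zeta^k)-\phi\big)\pi^k$, which is already in $H^0$ by the change-of-variables estimate above and $\int l^kK^k\pi^k<\infty$, and the compensated contribution $\mathcal{J}^k\phi=\int\big[\phi(\cdot+\zeta^k)-\phi-\zeta^{k;i}\partial_i\phi\big]\pi^k$ (note $\mathcal{J}^1$ is exactly the $k=1$ term).

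The compensated nonlocal term is where the work lies, and I expect it to be the main obstacle, particularly over $Z^1$ when $\beta<2$, since the naive second-order Taylor estimate would require the non-integrable weight $|\zeta^1|^2$ and two derivatives of $\phi$. My plan is to estimate it in the $H^{-1}$ duality: for $\psi\in C_c^\infty$, integrate the compensator by parts onto $\psi$ and apply the change of variables $y=x+\zeta^k(x,z)$ to $\int\psi\,\phi(\cdot+\zeta^k)\,dx$, producing $\int\big[\psi(\tilde\zeta^{k;-1}(y,z))\det\nabla\tilde\zeta^{k;-1}(y,z)-\psi(y)+\zeta^{k;i}(y,z)\partial_i\psi(y)+\func{div}\zeta^k(y,z)\,\psi(y)\big]\phi(y)\,dy$. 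The Jacobian expansions \eqref{ineq:detHminus1}--\eqref{ineq:detHminus1minustr} of Remark \ref{rem:PropertiesofH} show that the bracket carries the second-order cancellation in $\zeta^k$; distributing one derivative onto $\psi$ and the complementary fractional smoothness onto the increment of $\phi$, and invoking the $\tfrac\beta2$-H\"older bounds on $\func{div}\zeta^1$ and $\rho^1_{\mathnormal{sym}}$, bounds the pairing by a $z$-weight of the form $N\big(K^1(z)^\beta+\tilde K^1(z)^2+\tilde l^1(z)^2\big)$ times $\|\psi\|_1\|\phi\|_1$. Taking the supremum over $\|\psi\|_1\le1$ and integrating in $z$ against $\pi^k$, the integrability conditions of Assumption \ref{asm:proofSIDEassumpzero}(ii),(iv) close the estimate; the $Z^2$-integral is easier since $\int|\nabla\zeta^2|^2\pi^2\le N_0$ supplies the full second-order control directly.
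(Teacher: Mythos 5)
Your handling of $\mathcal{N}$, $\mathcal{I}$, the local differential terms, and the $\rho$-contribution is essentially sound (and close to the paper's), but the duality route you propose for the compensated nonlocal term — which you correctly identify as where the work lies — has a genuine gap. After your change of variables and integration by parts, group the bracket as
\begin{align*}
&\left[\psi(\tilde{\zeta}^{k;-1})-\psi+\zeta^{k;i}(\tilde{\zeta}^{k;-1})\partial_{i}\psi\right]
+\left[\zeta^{k;i}-\zeta^{k;i}(\tilde{\zeta}^{k;-1})\right]\partial_{i}\psi \\
&\quad+\left[\psi(\tilde{\zeta}^{k;-1})-\psi\right]\left[\det\nabla\tilde{\zeta}^{k;-1}-1\right]
+\psi\left[\det\nabla\tilde{\zeta}^{k;-1}-1+\func{div}\zeta^{k}\right].
\end{align*}
The last three groups are indeed controlled by \eqref{ineq:detHminus1}, \eqref{ineq:detHminus1minustr} and (for $k=1$) the H\"older bound on $\func{div}\zeta^{1}$, with integrable weights of type $K\bar{K}$, $\bar{K}^{2}$, $\tilde{K}^{1}(K^{1})^{\beta/2}$. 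But the first group is again a compensated first-order difference — of $\psi$, with displacement $\tilde{\zeta}^{k;-1}(y)-y$. The adjoint of a compensated difference operator is a compensated difference operator: the change of variables by itself creates no cancellation, and you face the original problem with $\phi$ and $\psi$ exchanged. Since $\psi$ is only in $H^{1}$, the only available bound for this group carries the weight $\min(K^{k},1)$, which is $\pi^{k}$-integrable when $\beta\le 1$ but not in general for $\beta\in(1,2]$, nor for the $Z^{2}$-integral. Your phrase about putting ``complementary fractional smoothness onto the increment of $\phi$'' cannot rescue this: $\phi$ has no regularity beyond $H^{1}$, and the paper's fractional-derivative tool (Lemma \ref{lem:fractionalderviativeestimategrowth}) is applied only to the free term $h$, which is \emph{assumed} to carry extra $\beta/2$ smoothness — never to $v$.

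Two further problems. First, your claim that the $Z^{2}$-part is ``easier'' is backwards: the fourth group forces a comparison of $\func{div}\zeta^{2}$ at $y$ and at $\tilde{\zeta}^{2;-1}(y)$, and no H\"older modulus for $\func{div}\zeta^{2}$ is assumed (the $\tilde{K}$-bound exists only for $k=1$); pointwise this difference is only $O(\bar{K}_{t}^{2}(z))$, a non-integrable weight. Second, the interpolated maps $x\mapsto x+\theta\zeta^{k}(x,z)$, $0<\theta<1$, are diffeomorphisms with controlled Jacobians only on the set where $\bar{K}_{t}^{k}(z)\le\eta$ (Remark \ref{rem:PropertiesofH}); Assumption \ref{asm:proofSIDEassumpzero}\textit{(iii)} covers only $\theta=1$, so your $\theta$-change of variables in the $\mathcal{I}$-estimate must be restricted to that set, with the complement (which has finite $\pi^{k}$-measure) handled by crude bounds, exactly as the paper does. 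The paper's proof avoids all three issues by a different order of operations: it writes the compensated term with Taylor's integral remainder, uses the identity $(\partial_{i}\phi(\tilde{\zeta}_{\theta}^{k})-\partial_{i}\phi)\zeta^{k;i}=\partial_{i}[\phi(\tilde{\zeta}_{\theta}^{k})-\phi]\zeta^{k;i}-\theta\,\partial_{j}\phi(\tilde{\zeta}_{\theta}^{k})\partial_{i}\zeta^{k;j}\zeta^{k;i}$, and integrates by parts only the full-derivative piece onto $\psi$ and $\func{div}\zeta^{k}$. Then only \emph{uncompensated} differences $\phi(\tilde{\zeta}_{\theta}^{k})-\phi$ appear, with weights $(K^{k})^{2}$, $K^{k}\bar{K}^{k}$, $(\bar{K}^{k})^{2}$ — all integrable, since $K^{1}\le N_{0}$ and $\int(K^{1})^{\beta}\pi^{1}\le N_{0}$ give $\int(K^{1})^{2}\pi^{1}<\infty$ — and no H\"older continuity of divergences is needed anywhere in the growth estimate.
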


\begin{proof}
First we will show that there is a constant $N$ such that 
\begin{equation*}
(\psi ,\mathcal{L}_{t}\phi )_{0}\leq N\| \psi \| _{1}\| \phi \| _{1},\quad
\forall \phi \in C_{c}^{\infty }.
\end{equation*}%
Once this is established, we know that $\mathcal{L}$ extends to a linear
operator from $H^{1}$ to $H^{-1}$ (still denoted by $\mathcal{L}$) and $\| 
\mathcal{L}_{t}v\| _{-1}\leq N\| v\| _{1}$, for all $v\in H^{1}$. Using
Taylor's formula and the divergence theorem, we get that for all and all $%
\phi ,\psi \in C_{c}^{\infty }$ 
\begin{equation*}
(\psi ,\mathcal{L}\phi )_{0}=\sum_{k=1}^{2}\left( (\psi ,\mathfrak{L}%
_{t}^{k}\phi )_{0}+(\partial _{i}\psi ,\mathfrak{Y}_{t}^{k;i}\phi
)_{0}+(\psi ,b_{t}\partial _{i}\phi )_{0}+(\psi ,c_{t}\phi )_{0}\right) ,
\end{equation*}%
where for each $k\in \{1,2\}$, $l\in \{1,\ldots ,d_{2}\}$, and $i\in
\{1,2,\ldots ,d_{1}\}$, 
\begin{align*}
\mathfrak{L}^{k;l}\phi :& =-\int_{\bar{K}^{k}<\eta }\int_{0}^{1}\left( \phi
^{l}(\tilde{\zeta}_{\theta }^{k})-\phi ^{l}\right) \partial
_{i}\zeta^{k;i}d\theta \pi ^{k}(dz) \\
& \quad -\int_{\bar{K}^{k}<\eta }\int_{0}^{1}\theta \partial _{j}\phi ^{l}(%
\tilde{\zeta}_{\theta }^{k})\partial _{i}\zeta^{k;j}\zeta^{k;i}d\theta \pi
^{k}(dz) \\
& \quad +\int_{Z^{k}}\rho ^{k;l\bar{l}}\left( \phi ^{l}(\tilde{\zeta}%
^{k})-\phi ^{l}\right) \pi ^{k}(dz)+\sigma ^{k;i\varrho }\upsilon ^{k;l\bar{l%
}\varrho }\partial _{i}\phi ^{\bar{l}} \\
& \quad +\int_{\bar{K}^{k}>\eta }\left( \phi ^{l}(\tilde{\zeta}^{k})-\phi
^{l}-\zeta^{k;i}\partial _{i}\phi ^{l}\right) \pi ^{k}(dz), \\
\mathfrak{Y}^{k;li}\phi :& =-\int_{\bar{K}^{k}<\eta }\int_{0}^{1}\left( \phi
^{l}(\tilde{\zeta}_{\theta }^{k})-\phi ^{l}\right) \zeta^{k;i}d\theta \pi
^{k}(dz)-\frac{1}{2}\partial _{i}\left( \sigma ^{k;i\varrho }\sigma
^{k;j\varrho }\right) \partial _{j}\phi ^{l}.
\end{align*}%
For the remainder of the proof, we make the convention that statements hold
for all $\phi ,\psi \in C_{c}^{\infty }$ and that all constants $N$ are
independent of $\phi $. By Minkowski's integral inequality and Holder's
inequality, we have (using the notation of Remark \ref{rem:PropertiesofH})%
\begin{gather*}
||\mathfrak{L}^{k}\phi ||_{0}\leq \left( \int_{\bar{K}^{k}<\eta
}(K^{k})^{2}\pi ^{k}(dz)\right) ^{\frac{1}{2}}\int_{0}^{1}\left( \int_{%
\mathbf{R}^{d_{1}}}\int_{\bar{K}^{k}<\eta }|\phi (\tilde{\zeta}_{\theta
}^{k}(z))-\phi |^{2}\pi ^{k}(dz)dx\right) ^{\frac{1}{2}}d\theta \\
+\int_{\bar{K}^{k}<\eta }K^{k}(z)^{2}\int_{0}^{1}\left( \int_{\mathbf{R}%
^{d_{1}}}|\nabla \phi (\tilde{\zeta}_{\theta }^{k})|^{2}dx\right) ^{\frac{1}{%
2}}\pi ^{k}(dz)\theta d\theta \\
+\left( \int_{Z^{1}}(l^{k})^{2}\pi ^{k}(dz)\right) ^{\frac{1}{2}}\left(
\int_{\mathbf{R}^{d_{1}}}\int_{Z^{1}}|\phi (\tilde{\zeta}^{k})-\phi |^{2}\pi
^{k}(dz)dx\right) ^{\frac{1}{2}} \\
+N\| \nabla \phi \| _{0}+\int_{\bar{K}^{k}\geq \eta }\int_{0}^{1}\left(
\int_{\mathbf{R}^{d_{1}}}|\phi (\tilde{\zeta}^{k})-\phi -\zeta^{k;i}\partial
_{i}\phi |^{2}dx\right) ^{\frac{1}{2}}d\theta \pi ^{k}(dz),
\end{gather*}%
and for all $i\in \{1,\ldots ,d_{1}\},$ 
\begin{equation*}
||\mathfrak{Y}^{k;i}\phi ||_{0}\leq \left( \int_{\bar{K}^{k}<\eta
}(K^{k})^{2}\pi ^{k}(dz)\right) ^{\frac{1}{2}}\int_{0}^{1}\left( \int_{%
\mathbf{R}^{d_{1}}}\int_{\bar{K}^{k}<\eta }|\phi (\tilde{\zeta}_{\theta
}^{k})-\phi |^{2}\pi ^{k}(dz)dx\right) ^{\frac{1}{2}}d\theta +N\| \nabla
\phi \| _{0}.
\end{equation*}%
Applying the change of variable formula and appealing to %
\eqref{ineq:BoundtildHtheta}, we find that 
\begin{gather*}
\int_{\mathbf{R}^{d_{1}}}\int_{\bar{K}^{k}<\eta }|\phi (\tilde{\zeta}%
_{\theta }^{k})-\phi |^{2}\pi ^{k}(dz)dx\leq \theta \int_{\bar{K}^{k}<\eta
}\int_{0}^{1}\int_{\mathbf{R}^{d_{1}}}|\nabla \phi (\tilde{\zeta}_{\theta 
\bar{\theta}}^{k})|^{2}|\zeta^{k}|^{2}dxd\bar{\theta} \\
\leq \theta \int_{\bar{K}^{k}<\eta }(K^{k})^{2}\int_{0}^{1}\int_{\mathbf{R}%
^{d_{1}}}|\nabla \phi |^{2}\left| \det \nabla \tilde{\zeta}_{\theta \bar{%
\theta}}^{k;-1}\right| dxd\bar{\theta}\leq N\theta \| \nabla \phi \|
_{0}^{2}.
\end{gather*}%
Similarly, since $\pi ^{k}(\{z\in Z^{k}:\bar{K}^{k}\geq \eta \})\leq N_{0}$,
we have 
\begin{gather*}
\int_{\mathbf{R}^{d_{1}}}\int_{\bar{K}^{k}\geq \eta }|\phi (\tilde{\zeta}%
^{k})-\phi |^{2}\pi ^{k}(dz)dx\leq 2\int_{\mathbf{R}^{d_{1}}}\int_{\bar{K}%
^{k}\geq \eta }\left( |\phi (\tilde{\zeta}^{k})|^{2}+|\phi |^{2}\right) \pi
^{k}(dz)dx \\
\leq \int_{\bar{K}^{k}\geq \eta }\int_{\mathbf{R}^{d_{1}}}|\phi |^{2}\left(
1+\left| \det \nabla \tilde{\zeta}^{k;-1}\right| \right) dx\pi ^{k}(dz)\leq
N\| \phi \| _{0}^{2}
\end{gather*}%
and 
\begin{gather*}
\int_{\bar{K}^{k}\geq \eta }\left( \int_{\mathbf{R}^{d_{1}}}|\phi (\tilde{%
\zeta}^{k})-\phi -\zeta^{k;i}\partial _{i}\phi |^{2}dx\right) ^{\frac{1}{2}%
}\pi ^{k}(dz) \\
\leq N\int_{\bar{K}^{k}\geq \eta }\left( \int_{\mathbf{R}^{d_{1}}}\left(
|\phi |^{2}\left( 1+\left| \det \nabla \tilde{\zeta}^{k;-1}\right| \right)
+(K^{k})^{2}|\nabla \phi |^{2}\right) dx\right) ^{\frac{1}{2}}\pi
^{k}(dz)\leq N\| \phi \| _{1},
\end{gather*}%
where in the last inequality we used \eqref{ineq:BoundtildH}. Moreover, 
\begin{gather*}
\int_{\bar{K}^{k}<\eta }(K^{k})^{2}\left( \int_{\mathbf{R}^{d_{1}}}|\nabla
\phi (\tilde{\zeta}_{\theta }^{k})|^{2}dx\right) ^{\frac{1}{2}}\pi ^{k}(dz)
\\
\leq \int_{\bar{K}^{k}<\eta }(K^{k})^{2}\left( \int_{\mathbf{R}%
^{d_{1}}}|\nabla \phi |^{2}\det \nabla \tilde{\zeta}_{\theta }^{k;-1}\,
dx\right) ^{\frac{1}{2}}\pi ^{k}(dz)\leq N\| \nabla \phi \| _{0}.
\end{gather*}%
Combining the above estimates, we get that $(\psi ,\mathcal{L}\phi )_{0}\leq
N\| \psi \| _{1}\| \phi \| _{1}.$ It is clear from the above computation
that 
\begin{equation*}
\| \mathcal{A}_{t}^{1}\phi \| _{-1}\leq N\| \phi \| _{1},\quad \| \mathcal{J}%
_{t}^{1}\phi \| _{-1}\leq N\| \phi \| _{1},
\end{equation*}%
where actually $\mathcal{A}^{1}$ and $\mathcal{J}^{1}$ are actually
extensions of the operators defined above. The inequality $\| \mathcal{N}%
\phi \| _{0}\leq N\|\phi\|_{1}$ can easily be obtained. Following similar
calculations to ones we derived above (using \eqref{ineq:BoundtildH} and %
\eqref{ineq:BoundtildHtheta}), we obtain 
\begin{equation*}
\int_{Z^{1}}||\mathcal{I}\phi ||_{0}^{2}\pi ^{1}(dz)\leq N(A_{1}+A_{2}),
\end{equation*}%
where%
\begin{align*}
A_{1}& :=\int_{\mathbf{R}^{d_{1}}}\int_{\bar{K}^{1}\leq \eta
}\int_{0}^{1}|\nabla \phi (\tilde{\zeta}_{\theta }^{1})|^{2}|\xi^{1}|^{2}\pi
^{1}(dz)d\theta dx \\
& \quad +\int_{\mathbf{R}^{d_{1}}}\int_{K^{1}>\eta }\left( |\phi (\tilde{%
\zeta}^{1})|^{2}+|\phi |^{2}\right) \pi ^{1}(dz)dx\leq N\|\phi\|_{1}^{2}
\end{align*}%
and 
\begin{equation*}
A_{2}:=\int_{\mathbf{R}^{d_{1}}}\int_{Z^{1}}\rho ^{1;l\bar{l}}\phi ^{\bar{l}%
}(\tilde{\zeta}^{1})\pi ^{1}\left( dz\right) dx\leq N\|\phi\|_{0}^{2}.
\end{equation*}
\end{proof}

\begin{lemma}
\label{lem:CoercvitySIDE} Let Assumption \ref{asm:proofSIDEassumpzero}$%
(d_{2})$ hold. Then there is a constant $N=N(d_{1},d_{2},N_{0},\eta ,\beta )$
such that for all $(\omega ,t)\in \Omega \times \lbrack 0,T]$ and all $v\in
H^{1},$ 
\begin{align}
2\langle v,\mathcal{L}_{t}^{2}v+b_{t}^{i}\partial _{i}v_{t}+c_{t}^{\cdot 
\bar{l}}v_{t}^{\bar{l}}\rangle _{1}& +\frac{1}{4}(\sigma _{t}^{2;i\varrho
}\partial _{i}v,\sigma _{t}^{2;j\varrho }\partial _{j}v)_{0}  \notag \\
& +\frac{1}{4}\int_{Z^{2}}\| v(\tilde{\zeta}_{t}^{2}(z))-v\| _{0}^{2}\pi
_{t}^{2}(dz)\leq N||v||_{0}^{2},  \label{ineq:coercivityuncorrelated}
\end{align}%
\begin{equation}
2\langle v,\mathcal{A}_{t}^{1}v\rangle _{1}+||\mathcal{N}_{t}v||_{0}^{2}\leq
N||v||_{0}^{2},\quad 2\langle v,\mathcal{J}_{t}^{1}v\rangle
_{1}+\int_{Z^{1}}||\mathcal{I}_{t,z}v||_{0}^{2}\pi _{t}^{1}(dz)\leq
N||v||_{0}^{2},  \label{ineq:coercivitygronwall}
\end{equation}%
and 
\begin{align}
2\langle v,\mathcal{L}_{t}v+f_{t}\rangle _{1}& +||\mathcal{N}%
_{t}v+g_{t}||_{0}^{2}+\int_{Z_{1}}||\mathcal{I}_{t,z}v+h_{t}\left( z\right)
||_{0}^{2}\pi ^{1}\left( dz\right)  \notag \\
& +\frac{1}{4}(\sigma _{t}^{2;i\varrho }\partial _{i}v,\sigma
_{t}^{2;j\varrho }\partial _{j}v)_{0}+\frac{1}{4}\int_{Z^{2}}\| v(\tilde{%
\zeta}_{t}^{2}(z))-v\| _{0}^{2}\pi _{t}^{2}(dz)  \notag \\
& \leq N\left( \| v\|
_{0}^{2}+||f_{t}||_{0}^{2}+||g_{t}||_{1}^{2}+\int_{Z^{1}}||h_{t}\left(
z\right) ||_{\frac{\beta }{2}}^{2}\pi _{t}^{1}(dz)\right) .
\label{ineq:coercivityfull}
\end{align}
\end{lemma}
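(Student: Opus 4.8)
The plan is to prove all three inequalities first for $v\in C_{c}^{\infty}$ and then extend to $v\in H^{1}$ by density, which is legitimate because Lemma \ref{lem:GrowthSIDE} shows every operator involved is bounded from $H^{1}$ into $H^{-1}$ (resp.\ $H^{0}$, $L^{2}(\pi^{1})$) and $C_{c}^{\infty}$ is dense in $H^{1}$. For smooth $v$ the elements $\mathcal{L}_{t}v$, $\mathcal{A}_{t}^{1}v$, $\mathcal{J}_{t}^{1}v$ lie in $H^{0}$, so by the Remark following the scale axioms $\langle v,\cdot\rangle_{1}=(v,\cdot)_{0}$; hence each of \eqref{ineq:coercivityuncorrelated}--\eqref{ineq:coercivityfull} becomes an $L^{2}(\mathbf{R}^{d_{1}})$ estimate that I would attack by integration by parts and the change-of-variables formula. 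Throughout, the bounded quantities supplied by Assumption \ref{asm:proofSIDEassumpzero}$(d_{2})$ ($\func{div}b$, $c$, $\nabla\sigma^{k}$, $\nabla\func{div}\sigma^{1}$, $\upsilon^{2}_{\func{sym}}$, $\nabla\upsilon^{1}$, and the kernels $K^{k},\bar K^{k},\tilde K^{1},l^{k},\tilde l^{1}$) are exactly the ones I must arrange to meet after each integration by parts.

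For the first inequality in \eqref{ineq:coercivitygronwall} I compute $2(v,\mathcal{A}_{t}^{1}v)_{0}$. One integration by parts of the second-order part yields the carr\'e du champ term $-\|\sigma_{t}^{1;i\varrho}\partial_{i}v\|_{0}^{2}$ plus first-order remainders, while $\|\mathcal{N}_{t}v\|_{0}^{2}=\|\sigma_{t}^{1;i\varrho}\partial_{i}v+\upsilon_{t}^{1}v\|_{0}^{2}$ contributes $+\|\sigma_{t}^{1;i\varrho}\partial_{i}v\|_{0}^{2}$. These top-order terms cancel exactly, and the products of $v$, $\nabla v$, and first derivatives of $\sigma^{1},\upsilon^{1}$ that remain are reduced to zeroth order in $\nabla v$ by a further integration by parts, which is precisely where the bounds on $\nabla\func{div}\sigma^{1}$ and $\nabla\upsilon^{1}$ are consumed, leaving $\le N\|v\|_{0}^{2}$. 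For the second inequality I treat the nonlocal operators jointly: expanding $\int_{Z^{1}}\|\mathcal{I}_{t,z}v\|_{0}^{2}\pi_{t}^{1}(dz)$ and using the elementary identity $2a(b-a)=(b^{2}-a^{2})-(b-a)^{2}$ in the cross term $2(v,\int_{Z^{1}}(v(\tilde\zeta_{t}^{1})-v)\pi_{t}^{1}(dz))_{0}$, the square $-(v(\tilde\zeta_{t}^{1})-v)^{2}$ cancels the leading part of $\|\mathcal{I}_{t,z}v\|_{0}^{2}$. The surviving $\int(v(\tilde\zeta_{t}^{1})^{2}-v^{2})$ becomes, under the change of variables $x\mapsto\tilde\zeta_{t}^{1}(x,z)$, an integral of $v^{2}(|\det\nabla\tilde\zeta_{t}^{1;-1}|-1)$ controlled by \eqref{ineq:detHminus1}; the compensator $-\zeta_{t}^{1;i}\partial_{i}v$ paired against $v$ produces $\func{div}\zeta^{1}$-weighted terms that combine with the determinant expansion through \eqref{ineq:detHminus1minustr}; and the $\rho^{1}$ corrections are absorbed using $\int(l^{1})^{2}\pi^{1}\le N_{0}$ together with the H\"older bound $[\rho^{1}_{\func{sym}}]_{\beta/2}\le\tilde l^{1}$.

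For \eqref{ineq:coercivityuncorrelated} the operator $\mathcal{L}^{2}$ has no matching martingale, so instead of a cancellation I keep the coercivity: integrating $2(v,\mathcal{L}_{t}^{2}v)_{0}$ by parts produces $-\|\sigma_{t}^{2;i\varrho}\partial_{i}v\|_{0}^{2}$ and (again by $2a(b-a)=(b^{2}-a^{2})-(b-a)^{2}$ plus change of variables) $-\int_{Z^{2}}\|v(\tilde\zeta_{t}^{2})-v\|_{0}^{2}\pi_{t}^{2}$, and moving only $\tfrac14$ of each back to the left leaves $-\tfrac34$ of each as a reservoir. The delicate remainders are of the form $(v,(\sigma_{t}^{2;\varrho}\cdot\nabla\sigma_{t}^{2;j\varrho})\partial_{j}v)_{0}$: a naive second integration by parts would demand $\nabla\func{div}\sigma^{2}$, which is not assumed, so instead I integrate by parts so as to throw the offending derivative onto $|v|^{2}$ and onto $\sigma^{2}$, which only ever exposes $\func{div}\sigma^{2}$ (bounded) and regenerates a $\sigma^{2}\cdot\nabla v$ factor absorbed into the $-\tfrac34\|\sigma_{t}^{2;i\varrho}\partial_{i}v\|_{0}^{2}$ reservoir by Young's inequality; the terms $b^{i}\partial_{i}v$, $cv$, and $\sigma^{2}\upsilon^{2}\partial v$ are handled the same way, their symmetric parts ($\func{div}b$, $c$, $\upsilon^{2}_{\func{sym}}$) giving $\le N\|v\|_{0}^{2}$ and their gradients absorbed. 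Finally \eqref{ineq:coercivityfull} follows by adding \eqref{ineq:coercivityuncorrelated} and \eqref{ineq:coercivitygronwall} and expanding $\|\mathcal{N}_{t}v+g_{t}\|_{0}^{2}$ and $\int\|\mathcal{I}_{t,z}v+h_{t}\|_{0}^{2}\pi_{t}^{1}$: since $\|\mathcal{N}_{t}v\|_{0}^{2}$ has already been spent on the cancellation above, the cross term $2(\mathcal{N}_{t}v,g_{t})_{0}=2(\sigma_{t}^{1;i\varrho}\partial_{i}v+\upsilon_{t}^{1}v,g_{t})_{0}$ is integrated by parts to throw the derivative onto $g_{t}$, which is exactly why the stronger norm $\|g_{t}\|_{1}^{2}$ (and not $\|g_{t}\|_{0}^{2}$) appears on the right, while the jump cross term is estimated by the change of variables and the H\"older-$\tfrac{\beta}{2}$ control, producing $\int_{Z^{1}}\|h_{t}(z)\|_{\beta/2}^{2}\pi_{t}^{1}(dz)$.

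The main obstacle is not the cancellation structure, which is classical, but the bookkeeping under the deliberately weak regularity of Assumption \ref{asm:proofSIDEassumpzero}$(d_{2})$: every remainder that is first order in $\nabla v$ must be reduced using only the specific bounded quantities listed there, which dictates the exact order and direction of each integration by parts (never differentiating $\func{div}\sigma^{2}$ or the full $\upsilon^{2}$, only its symmetric part), and the jump remainders must be split on $\{\bar K^{k}\le\eta\}$ and $\{\bar K^{k}>\eta\}$ and pushed through the global-diffeomorphism and determinant estimates of Remark \ref{rem:PropertiesofH}, with the $\beta/2$-H\"older hypotheses on $\func{div}\zeta^{1}$ and $\rho^{1}_{\func{sym}}$ precisely offsetting the second-order Taylor remainder of the compensated jump. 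Keeping all of these absorptions mutually consistent --- so that no single term ever demands more smoothness of the coefficients or more integrability of $\pi^{k}$ than is assumed --- is the genuinely delicate part of the argument.
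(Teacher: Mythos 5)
Your proposal follows essentially the same route as the paper's proof: reduce to $\phi\in C_c^\infty$ and extend by density via the boundedness in Lemma \ref{lem:GrowthSIDE}; cancel the top-order terms of $2\langle v,\mathcal{A}_t^{1}v\rangle_1$ against $\|\mathcal{N}_t v\|_0^2$ by the divergence theorem; treat the jump operators with the identity $2a(b-a)=-|b-a|^2+|b|^2-|a|^2$, the change of variables $x\mapsto\tilde{\zeta}_t^{k;-1}(x,z)$, and the determinant estimates \eqref{ineq:detHminus1} and \eqref{ineq:detHminus1minustr} of Remark \ref{rem:PropertiesofH}; keep a fraction of the negative carr\'e du champ and jump-difference terms of $\mathcal{L}^2$ as a reservoir, choreographing the integrations by parts so that only $\func{div}\sigma^2$ (never $\nabla\func{div}\sigma^2$) is exposed and the leftover gradient terms are absorbed by Young's inequality; and obtain $\|g_t\|_1^2$ by throwing the derivative in the cross term $(\mathcal{N}_t v,g_t)_0$ onto $g_t$. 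All of this matches the paper.

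There is, however, one genuine gap: the jump cross term with $h$ in \eqref{ineq:coercivityfull}. After the change of variables you are left with the term
\begin{equation*}
\Bigl(v,\int_{Z^{1}}\bigl(h_{t}(\tilde{\zeta}_{t}^{1;-1}(\cdot,z),z)-h_{t}(\cdot,z)\bigr)\pi_{t}^{1}(dz)\Bigr)_{0}
\end{equation*}
(plus harmless determinant and $\rho^1$ corrections), and your claim that this is handled ``by the change of variables and the H\"older-$\beta/2$ control'' does not close the estimate. The H\"older-$\beta/2$ hypotheses of Assumption \ref{asm:proofSIDEassumpzero} concern $\func{div}\zeta^{1}$ and $\rho_{\mathnormal{sym}}^{1}$, not $h$; a Cauchy--Schwarz in $z$ is unavailable because $\pi_t^{1}$ is only sigma-finite (only $\int_{Z^{1}}(K_t^{1})^{\beta}\pi_t^{1}(dz)\le N_0$ is assumed); and since $\zeta^{1}$ depends on $x$ (a diffeomorphism, not a translation), the elementary Fourier-multiplier bound $\|h(\cdot+y)-h\|_{0}\le N|y|^{\beta/2}\|h\|_{\beta/2}$ does not apply either. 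What is actually needed---and what the paper isolates as the appendix Lemma \ref{lem:fractionalderviativeestimategrowth}, built on the fractional-kernel representation of Lemma \ref{lem:FractionaKernel}---is the estimate
\begin{equation*}
\Bigl\|\int_{Z^{1}}\bigl(h_{t}(\tilde{\zeta}_{t}^{1;-1}(\cdot,z),z)-h_{t}(\cdot,z)\bigr)\pi_{t}^{1}(dz)\Bigr\|_{0}^{2}\le N\int_{Z^{1}}\|h_{t}(z)\|_{\frac{\beta}{2}}^{2}\pi_{t}^{1}(dz),
\end{equation*}
in which writing the composed difference as a convolution of $\partial^{\beta/2}h$ against a kernel of total mass of order $K^{1}(z)^{\beta/2}$ is precisely what makes the $z$-integration summable through $\int_{Z^{1}}(K^{1})^{\beta}\pi^{1}\le N_0$. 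Without this lemma or an equivalent substitute, your sketch of \eqref{ineq:coercivityfull} does not go through; the rest of the proposal is sound.
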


\begin{proof}
For the remainder of the proof, we make the convention that statements hold
for all $\phi \in C_{c}^{\infty }$ and that all constants $N$ are
independent of $\phi $. Using the divergence theorem, we get 
\begin{align*}
2\langle \phi ,\mathcal{A}^{1}\phi \rangle _{1}& +||\mathcal{N}\phi
||_{0}^{2}=\frac{1}{2}\int_{\mathbf{R}^{d_{1}}}\left( |\func{div}\sigma
^{1}|^{2}+2\sigma ^{1;i}\partial _{i}\func{div}\sigma ^{1}+\partial
_{j}\sigma ^{1;i}\partial _{i}\sigma ^{1;j}\right) |\phi |^{2}dx \\
& \quad +\int_{\mathbf{R}^{d_{1}}}\left( |\upsilon ^{1}\phi |^{2}-2\phi
^{l}\left( \upsilon _{\mathnormal{sym}}^{1;l\bar{l}}\func{div}\sigma
^{1}+\sigma ^{1;i}\partial _{i}\upsilon _{\mathnormal{sym}}^{1;l\bar{l}%
}\right) \phi ^{\bar{l}}\right) dx\leq N\|\phi\|_{0}^{2}.
\end{align*}%
Rearranging terms and using the identity $2a(b-a)=-|b-a|^{2}+|b|^{2}-|a|^{2}$%
, $a,b\in \mathbf{R}$, we obtain 
\begin{equation*}
2\langle \phi ,\mathcal{J}^{1}\phi \rangle _{1}+\int_{Z_{1}}||\mathcal{I}%
\phi ||_{0}^{2}\pi _{t}^{1}(dz)=A_{1}+A_{2},
\end{equation*}%
where%
\begin{align*}
A_{1}& :=\int_{\mathbf{R}^{d_{1}}}\int_{Z^{1}}\left( |\phi (\tilde{\zeta}%
^{1})|^{2}-|\phi |^{2}-2\phi \zeta^{1;i}\partial _{i}\phi \right) \pi
^{1}(dz)dx \\
A_{2}& :=2\int_{\mathbf{R}^{d_{1}}}\int_{Z^{1}}\left( \phi ^{l}(\tilde{\zeta}%
^{1})\rho ^{1;l\bar{l}}\phi ^{\bar{l}}(\tilde{\zeta}^{1})-\phi ^{l}\rho ^{1;%
\bar{l}}\phi ^{l}\right) \pi ^{1}(dz)dx +\int_{\mathbf{R}^{d_{1}}}%
\int_{Z^{1}}|\rho ^{1}\phi (\tilde{\zeta}^{1})|^{2}\pi ^{1}(dz)dx,
\end{align*}%
Since 
\begin{equation*}
|\func{div}\zeta^{1}(\tilde{\zeta}^{1;-1})-\func{div}\zeta^{1}|\leq \lbrack 
\func{div}\zeta^{1}]_{\frac{\beta }{2}}(K^{1})^{\frac{\beta }{2}}\leq (%
\tilde{K}^{1})^{2}+(K^{1})^{\beta },
\end{equation*}%
changing the variable of integration and making use of the estimate %
\eqref{ineq:detHminus1minustr}, we obtain%
\begin{equation*}
A_{1}\leq \int_{\mathbf{R}^{d_{1}}}|\phi |^{2}\int_{Z^{1}}|\det \nabla 
\tilde{\zeta}^{1;-1}-1+\func{div}\zeta^{1}|\pi ^{1}(dz)dx\leq
N\|\phi\|_{0}^{2}
\end{equation*}%
and%
\begin{align*}
A_{2}& =2\int_{\mathbf{R}^{d_{1}}}\int_{Z^{1}}\phi ^{l}\left( \rho ^{1;l\bar{%
l}}(\tilde{\zeta}^{1;-1})-\rho ^{1;l\bar{l}}\right) \phi ^{\bar{l}}\pi
^{1}(dz)dx \\
& \quad +\int_{\mathbf{R}^{d_{1}}}\int_{Z^{1}}2\phi ^{l}\rho ^{1;l\bar{l}}(%
\tilde{\zeta}^{1;-1})\phi ^{\bar{l}}\left( \det \nabla \tilde{\zeta}%
^{1;-1}-1\right) \pi ^{1}(dz)dx \\
& \quad +\int_{\mathbf{R}^{d_{1}}}\int_{Z^{1}}|\rho ^{1}(\tilde{\zeta}%
^{1;-1})\phi |^{2}\det \nabla \tilde{\zeta}^{1;-1}\pi
^{1}(dz)dx=:A_{21}+A_{22}+A_{23}.
\end{align*}%
Owing to \eqref{ineq:detHminus1} and Holder's inequality, we have 
\begin{equation*}
A_{22}+A_{23}\leq N\int_{Z^{1}}((l^{1})^{2}+(K^{1})^{2})\pi ^{1}(dz)||\phi
||_{0}^{2}.
\end{equation*}%
For $\beta >0,$ we have 
\begin{equation*}
A_{21}\leq N\int_{Z^{1}}\left[ \rho _{\mathnormal{sym}}^{1}\right] _{\frac{%
\beta }{2}}(K^{1})^{\frac{\beta }{2}}\pi ^{1}(dz)\|\phi\|_{0}^{2}\leq
N\int_{Z^{1}}\left( (\tilde{l}^{1})^{2}+(K^{1})^{\beta }\right) \pi
^{1}(dz)\|\phi\|_{0}^{2}\leq N\|\phi\|_{0}^{2}
\end{equation*}%
and for $\beta =0$, using Holder's inequality, we get 
\begin{equation*}
A_{21}\leq N\|\phi\|_{0}^{2}\int_{Z^{1}}(l^{1})^{2}\pi ^{1}(dz).
\end{equation*}%
By the divergence theorem, we have 
\begin{equation*}
2\langle \phi ,\mathcal{L}^{2}\phi \rangle _{0}=B_{1}+B_{2}+B_{3}\text{, }
\end{equation*}%
where 
\begin{align*}
B_{1}& :=\int_{\mathbf{R}^{d_{1}}}\left( \phi ^{l}\sigma ^{2;i\varrho
}\sigma ^{2;j\varrho }\partial _{ij}\phi ^{l}+2\sigma ^{2;i\varrho }\upsilon
^{2;l\bar{l}\varrho }\partial _{i}\phi ^{\bar{l}}\right) dx, \\
B_{2}& :=2\int_{\mathbf{R}^{d_{1}}}\int_{Z^{2}}\phi ^{l}\left( \phi ^{l}(%
\tilde{\zeta}^{2})-\phi ^{l}-\zeta^{2;i}\partial _{i}\phi ^{l}\right) \pi
^{2}(dz)dx, \\
B_{3}& :=2\int_{\mathbf{R}^{d_{1}}}\int_{Z^{2}}\phi ^{l}\rho ^{2;l\bar{l}%
}\left( \phi ^{\bar{l}}(\tilde{\zeta}^{2})-\phi ^{\bar{l}}\right) \pi
^{2}(dz)dx.
\end{align*}%
Owing to the divergence theorem, we have 
\begin{equation*}
(\phi ,\sigma ^{2;i\varrho }\sigma ^{2;j\varrho }\partial _{ij}\phi
)_{0}=-\left( \left( \sigma ^{2;i\varrho }\sigma ^{2;j\varrho }\partial
_{i}\phi +\phi \left( \sigma ^{2;j\varrho }\func{div}\sigma ^{2;\varrho
}+\sigma ^{2;i\varrho }\partial _{i}\sigma ^{2;j\varrho }\right) \right)
,\partial _{j}\phi \right) _{0}
\end{equation*}%
\begin{equation*}
(\phi \sigma ^{2;i\varrho }\partial _{i}\sigma ^{2;j\varrho },\partial
_{j}\phi )_{0}=-\frac{1}{2}\left( \phi \left( \partial _{j}\sigma
^{2;i\varrho }\partial _{i}\sigma ^{2;j\varrho }+\sigma ^{2;j\varrho
}\partial _{j}\func{div}\sigma ^{2;\varrho }\right) ,\phi \right) _{0},
\end{equation*}%
\begin{equation*}
(\phi \sigma ^{2;j\varrho }\partial _{j}\func{div}\sigma ^{2;\varrho },\phi
)_{0}=-(\phi |\func{div}\sigma ^{2}|^{2},\phi )_{0}+2(\phi \sigma
^{2;i\varrho }\func{div}\sigma ^{2;\varrho },\partial _{j}\phi )_{0},
\end{equation*}%
and hence, 
\begin{align*}
(\phi ,\sigma _{t}^{2;i\varrho }\sigma _{t}^{2;j\varrho }\partial _{ij}\phi
)_{0}& =-\left( \sigma ^{2;i\varrho }\sigma ^{2;j\varrho }\partial _{i}\phi
^{l}\partial _{j}\phi ^{l}+2\phi \sigma ^{2;j\varrho }\func{div}\sigma
^{2;\varrho },\partial _{j}\phi \right) _{0} \\
& \quad +\frac{1}{2}\left( \phi \left( \partial _{j}\sigma ^{2;i\varrho
}\partial _{i}\sigma ^{2;j\varrho }-|\func{div}\sigma ^{2}|^{2}\right) ,\phi
\right) _{0}.
\end{align*}%
Thus, by Young's inequality,%
\begin{equation*}
B_{1}\leq -\frac{1}{2}\int \partial _{i}\phi ^{l}\sigma ^{2;i\varrho }\sigma
^{2;j\varrho }\partial _{j}\phi ^{l}dx+N\|\phi\|_{0}^{2}.
\end{equation*}%
Once again making use of the identity $2a(b-a)=-|b-a|^{2}+|b|^{2}-|a|^{2}$, $%
a,b\in \mathbf{R}$, we get 
\begin{equation*}
2\phi ^{l}(\phi ^{l}(\tilde{\zeta}^{2})-\phi ^{l}-\zeta^{2;i}\partial
_{i}\phi ^{l})=-|\phi (\tilde{\zeta}^{2})-\phi |^{2}+|\phi (\tilde{\zeta}%
^{2})|^{2}-|\phi |^{2}-\zeta_t^{2;i}\partial _{i}|\phi |^{2}.
\end{equation*}%
Changing the variable of integration and applying the divergence theorem, we
obtain 
\begin{align*}
B_{2}& =-\int_{\mathbf{R}^{d_{1}}}\int_{Z^{2}}|\phi (\tilde{\zeta}^{2})-\phi
|^{2}\pi ^{2}(dz)dx \\
& \quad +\int_{\mathbf{R}^{d_{1}}}\int_{Z^{2}}|\phi |^{2}\left( \det \nabla 
\tilde{\zeta}^{2;-1}-1+\func{div}\zeta^{2}(\tilde{\zeta}^{2;-1})\right) \pi
^{2}(dz)dx \\
& \quad +\int_{\mathbf{R}^{d_{1}}}\int_{Z^{2}}|\phi |^{2}\left( \func{div}
\zeta^{2}-\func{div}\zeta^{2}(\tilde{\zeta}^{2;-1})\right) \pi ^{2}(dz)dx.
\end{align*}%
Changing the variable of integration in the last term of $B_{2}$, we get 
\begin{align*}
& \int_{\mathbf{R}^{d_{1}}}\int_{Z^{2}}|\phi |^{2}\left( \func{div}\zeta^{2}-%
\func{div}\zeta^{2}(\tilde{\zeta}^{2;-1})\right) \pi ^{2}(dz)dx \\
& =\int_{\mathbf{R}^{d_{1}}}\int_{Z^{2}}\left( |\phi |^{2}-|\phi (\tilde{%
\zeta})|^{2}\det \nabla \tilde{\zeta}^{2}\right) \func{div}\zeta^{2}\pi
^{2}(dz)dx \\
& =\int_{\mathbf{R}^{d_{1}}}\int_{Z^{2}}|\phi (\tilde{\zeta}^{2})|^{2}\left(
1-\det \nabla \tilde{\zeta}^{2}\right) \func{div}\zeta^{2}\pi ^{2}(dz)dx \\
& +\int_{\mathbf{R}^{d_{1}}}\int_{Z^{2}}(\phi ^{l}-\phi ^{l}(\tilde{\zeta}%
^{2}))(\phi ^{l}+\phi ^{l}(\tilde{\zeta}^{2}))\func{div}\zeta^{2}\pi
^{2}(dz)dx=:B_{21}+B_{22}.
\end{align*}%
Clearly,%
\begin{equation*}
B_{21}\leq N\int_{Z^{2}}(\bar{K}^{2})^{2}\pi ^{2}(dz)\|\phi\|_{0}^{2},
\end{equation*}%
and applying Holder's inequality,%
\begin{equation*}
B_{22}\leq N\int_{\mathbf{R}^{d_{1}}}\left( \int_{Z^{2}}|\phi (\tilde{\zeta}%
^{2})-\phi |^{2}\pi ^{2}(dz)\right) ^{\frac{1}{2}}\left( \int_{Z^{2}}\left(
|\phi |^{2}+|\phi (\tilde{\zeta}^{2})|^{2}\right) (\bar{K}^{2})^{2}\pi
^{2}(dz)\right) ^{\frac{1}{2}}dx.
\end{equation*}%
Hence, by Remark \ref{rem:PropertiesofH} and Young's inequality,%
\begin{equation*}
B_{2}\leq -\frac{1}{2}\int_{\mathbf{R}^{d_{1}}}\int_{Z^{2}}|\phi (\tilde{%
\zeta}^{2})-\phi |^{2}\pi ^{2}(dz)dx+N\|\phi\|_{0}^{2}.
\end{equation*}%
By Holder's inequality,%
\begin{equation*}
B_{3}\leq N\int_{\mathbf{R}^{d_{1}}}\left( \int_{Z^{2}}|\phi (\tilde{\zeta}%
^{2})-\phi |^{2}\pi ^{2}(dz)\right) ^{\frac{1}{2}}\left(
\int_{Z^{2}}(l^{2})^{2}\pi ^{2}(dz)\right) ^{\frac{1}{2}}|\phi |dx.
\end{equation*}%
Applying Young's inequality again and combining $B_{2}$ and $B_{3},$ we
derive 
\begin{equation}
2\langle \phi ,\mathcal{L}^{2}\phi \rangle _{1}\leq N\|\phi\|_{0}^{2}-\frac{1%
}{4}\int \partial _{i}\phi ^{l}\sigma ^{2;i\varrho }\sigma ^{2;j\varrho
}\partial _{j}\phi ^{l}dx-\frac{1}{4}\int \int_{Z^{2}}|\phi (\tilde{\zeta}%
^{2})-\phi |^{2}\pi ^{2}(dz)dx.  \label{ineq:proofuncorrcoerc}
\end{equation}%
By the divergence theorem, we have 
\begin{equation}
2\langle \phi ,b^{i}\partial _{i}\phi +c^{\cdot \bar{l}}\phi ^{\bar{l}%
}+f\rangle _{0}=2\left( \phi ,f\right) _{0}+(\phi ,\phi \func{div}%
b)_{0}+2(\phi ,c\phi )_{0}\leq N(\| \phi \| _{0}^{2}+\| f\| _{0}^{2}).
\label{ineq:coercfirstzero}
\end{equation}%
Combining \eqref{ineq:proofuncorrcoerc} and \eqref{ineq:coercfirstzero}, we
obtain \eqref{ineq:coercivityuncorrelated}. To obtain the estimate %
\eqref{ineq:coercivitygronwall}, we use \eqref{ineq:coercivityuncorrelated}
and \eqref{ineq:coercivitygronwall}, and estimate the additional terms: 
\begin{equation*}
D:=\left( \sigma ^{1;i\varrho }\partial _{i}\phi +\upsilon ^{1;\cdot \bar{l}%
\varrho }\phi ^{\bar{l}},g^{\varrho }\right) _{0}
\end{equation*}%
and%
\begin{equation*}
2\int_{Z^{1}}\left( \left( \phi (\tilde{\zeta}^{1})-\phi ,h\right)
_{0}+(\rho ^{1}\phi (\tilde{\zeta}^{1}),h)_{0}\right) \pi
^{1}(dz)=:E_{1}+E_{2}.
\end{equation*}%
By the divergence theorem and Holder's inequality,$|D|\leq N\left( ||\phi
||_{0}^{2}+||g||_{1}^{2}\right) .$ Applying Holder's inequality and changing
the variable of integration, we get 
\begin{equation*}
E_{2}\leq \int_{\mathbf{R}^{d_{1}}}\int_{Z^{1}}\left( |\rho ^{1}\phi (\tilde{%
\zeta}^{1})|^{2}+|h|^{2}\right) \pi ^{1}(dz)dx\leq N\left( ||\phi
||_{0}^{2}+\int_{Z^{1}}||h\left( z\right) ||_{0}^{2}\pi ^{1}(dz)\right) .
\end{equation*}%
Then by \eqref{ineq:detHminus1}, Holder's inequality, and Lemma \ref%
{lem:fractionalderviativeestimategrowth}, 
\begin{align*}
E_{1}& =2\int_{Z^{1}}\int_{\mathbf{R}^{d_{1}}}\phi ^{l}\left( h^{l}(\tilde{%
\zeta}^{1;-1})(\det \nabla \tilde{\zeta}^{1;-1}-1)+h^{l}(\tilde{\zeta}%
^{1;-1})-h\right) dx\pi ^{1}(dz) \\
& \leq N\left( \|\phi\|_{0}^{2}+\int_{\mathbf{R}^{d_{1}}}\int_{Z^{1}}\left(
|h|^{2}+|h(\tilde{\zeta}^{1;-1})-h|^{2}\right) \pi ^{1}(dz)dx\right) \\
& \leq N\left( \|\phi\|_{0}^{2}+\int_{Z^{1}}||h(z)||_{\frac{\beta }{2}%
}^{2}\pi ^{1}(dz)\right) .
\end{align*}%
This completes the proof.
\end{proof}

In the following lemma, we verify that Assumption \ref{asm:specSES}$(0,1)$
holds for \eqref{eq:SIDE}. Recall that $\mathcal{W}^{0,1}$ is the space of
all $H^{0}$-valued strongly c\`{a}dl\`{a}g processes $v:\Omega \times
\lbrack 0,T]\rightarrow H^{0}$ that belong to $L^{2}(\Omega \times \lbrack
0,T],\mathcal{O}_{T},dV_{t}d\mathbf{P};H^{1}).$

\begin{lemma}
\label{lem:SpecialCoercivitySIDE}Let Assumption \ref{asm:proofSIDEassumpzero}%
$(d_{2})$ hold. Then there is a constant $N=N(d_{1},d_{2},N_{0},\eta ,\beta
) $ such that for all $v\in \mathcal{W}^{0,1},$ $\mathbf{P}$-a.s.:

\begin{enumerate}
\item 
\begin{gather*}
2\langle v_{t},\mathcal{L}_{t}v_{t}\rangle _{1}dV_{t}+\| \mathcal{N}%
_{t}v_{t}\| _{0}^{2}dV_{t}+\int_{Z^{1}}\| \mathcal{I}_{t,z}v_{t-}\|
_{0}^{2}\eta (dt,dz) \\
+2(v_{t},\mathcal{N}_{t}^{\varrho }v_{t})_{0}dw_{t}^{\varrho
}+2\int_{Z^{1}}(v_{t-},\mathcal{I}_{t,z}v_{t-})_{0}\tilde{\eta}\left(
dt,dz\right) \\
\leq \left(N||v_{t}||_{0}^{2}dV_{t}+\int_{Z^{1}}N\kappa
_{t}(z)||v_{t-}||_{0}^{2}\eta ( dt,dz) +2(v_{t},\mathcal{N}_{t}^{\rho
}v_{t})_{0}dw_{t}^{\varrho }+\int_{Z^{1}}G_{t,z}(v)\tilde{\eta}%
(dt,dz)\right) ,
\end{gather*}
where 
\begin{gather*}
|G_{t,z}(v)|dV_t\leq \bar{\kappa}_{t}(z)||v_{t-}||_{0}^{2}dV_t,\; \forall
z\in Z^{1},\quad |(v_{t},\mathcal{N}_{t}v_{t})_{0}|dV_t\leq N\left| \left|
v_{t}\right| \right| _{0}^{2}dV_t,
\end{gather*}
and $\kappa_t$ and $\bar{\kappa}_t$ are $\mathcal{P}_{T}\times \mathcal{Z}%
^{1}$-measurable processes such that for all $t\in [0,T]$, 
\begin{equation*}
\int_{Z^{1}}\left( \kappa _{t}(z)+\tilde{\kappa}_{t}(z)^{2}\right) \pi
_{t}^{1}(dz)\leq N;
\end{equation*}

\item 
\begin{gather*}
2(\mathcal{N}_{t}^{\varrho }v_{t},g_{t}^{\varrho })_{0}dV_{t}+2\int_{Z^{1}}(%
\mathcal{I}_{t,z}v_{t-},h_{t}(z))_{0}\eta (dt,dz)+2(v_{t},g_{t}^{\varrho
})_{0}dw_{t}^{\varrho }+2\int_{Z^{1}}(v_{t-},h_{t}(z))_{0}\tilde{\eta}(dt,dz)
\\
\leq \left(N \| v_{t-}\| _{0}r_{t}dV_{t}+\int_{Z^{1}}N\| v_{t-}\| _{0}\|
h_{t}(z)\| _{0}\hat{\kappa}_{t}(z)\eta (dt,dz)+2(v_{t},g_{t}^{\varrho
})_{0}dw_{t}^{\varrho }+2\int_{Z^{1}}\bar{G}_{t,z}(v)\tilde{\eta}%
(dt,dz)\right) ,
\end{gather*}%
where 
\begin{gather*}
r_{t}:=\|g_t\|_1+\left| \left| \int_{Z^{1}}\left( h_{t}(\tilde{\zeta}%
_{t}^{1;-1}(z),z)-h_{t}(z)\right) \pi _{t}^{1}(dz)\right| \right| _{0},
\;t\in [0,T], \\
|(v_{t},g_{t})_{0}|dV_t\leq N\|v_{t}\|_{0}\| g_{t}\|_{0}dV_t, \\
\bar{G}_{t,z}(v)dV_t\leq N||v_{t-}||_{0}||h_{t}(z)||_{0},dV_t, \; \forall
z\in Z^{1},
\end{gather*}%
and $\hat{\kappa}_t$ is a $\mathcal{P}_{T}\times \mathcal{Z}^{1}$-measurable
process such that for all $t\in [0,T]$, 
\begin{equation*}
\int_{Z^{1}}\hat{\kappa}_{t}(z)^{2}\pi _{t}^{1}(dz)\leq N.
\end{equation*}
\end{enumerate}
\end{lemma}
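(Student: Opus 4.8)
The plan is to verify Assumption \ref{asm:specSES}$(0,1)$ for \eqref{eq:SIDE} by writing out the quadratic variation $[\mathfrak{M}(v)]_{0;t}$ and the It\^o corrector $2\{v_{t-}\mathcal{M}_tv_{t-}\}_{H^0}\,dM_t$ for the cylindrical martingale assembled from $(w^{\varrho})$ and $\tilde{\eta}$, and then collapsing the resulting $dV_t$-coefficient onto the coercivity estimates already established in Lemma \ref{lem:CoercvitySIDE}. Since the martingale part of \eqref{eq:SIDE} splits into the continuous piece $\mathcal{N}_t^{l\varrho}v\,dw_t^{\varrho}$ and the purely discontinuous piece $\int_{Z^1}\mathcal{I}_{t,z}^l v_{t-}\,\tilde{\eta}(dt,dz)$, the quadratic variation in $H^0$ equals $\|\mathcal{N}_tv_t\|_0^2\,dV_t+\int_{Z^1}\|\mathcal{I}_{t,z}v_{t-}\|_0^2\,\eta(dt,dz)$, carrying the raw jump measure $\eta$ rather than its compensator. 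The one manoeuvre that organises the whole argument is to write $\eta=\pi_t^1(dz)\,dV_t+\tilde{\eta}$: this folds the compensator part of the jump quadratic variation back into the integro-differential drift, and it pairs the leftover $\tilde{\eta}$-integral of $\|\mathcal{I}v\|_0^2$ against the corrector $\tilde{\eta}$-integral of $2(v_{t-},\mathcal{I}v_{t-})_0$. Throughout I shall reuse, almost verbatim, the change-of-variables computations from the proofs of Lemmas \ref{lem:GrowthSIDE} and \ref{lem:CoercvitySIDE}, now also tracking the martingale remainders that were discarded there.

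For part $(i)$, after the split the $dV_t$-coefficient is precisely $2\langle v,\mathcal{L}_tv\rangle_1+\|\mathcal{N}_tv\|_0^2+\int_{Z^1}\|\mathcal{I}_{t,z}v\|_0^2\,\pi_t^1(dz)$, which is bounded by $N\|v\|_0^2$ upon adding \eqref{ineq:coercivityuncorrelated} and the two inequalities in \eqref{ineq:coercivitygronwall} and discarding the favourable $\sigma^2$- and $Z^2$-square terms. For the jump-martingale part I use the pointwise identity $\|\mathcal{I}_{t,z}v\|_0^2+2(v,\mathcal{I}_{t,z}v)_0=\|w\|_0^2-\|v\|_0^2$ with $w^l:=(\delta_{l\bar{l}}+\rho_t^{1;l\bar{l}})v^{\bar{l}}(\tilde{\zeta}_t^1)$; this is the decisive cancellation, since after the substitution $y=\tilde{\zeta}_t^1(x,z)$ the quantity $\|w\|_0^2=\int_{\mathbf{R}^{d_1}}|(\delta+\rho_t^1(\tilde{\zeta}^{1;-1}))v|^2\,|\det\nabla\tilde{\zeta}^{1;-1}|\,dy$ is of zeroth order in $v$, so all of the dangerous $\|v\|_1$ content has vanished. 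Using \eqref{ineq:detHminus1} together with $|\rho^1|\le l^1$ and $|\nabla\zeta^1|\le\bar{K}^1$ I get a pointwise bound $|\,\|w\|_0^2-\|v\|_0^2\,|\le N\kappa_t(z)\|v\|_0^2$ for a $\kappa_t$ built from $l^1$, $(l^1)^2$ and $\bar{K}^1$; since each of these is bounded and $\pi^1$-square-integrable by Assumption \ref{asm:proofSIDEassumpzero}$(d_2)$, both $\int_{Z^1}\kappa_t\,\pi_t^1(dz)$ and $\int_{Z^1}\kappa_t^2\,\pi_t^1(dz)$ are $\le N$. One then routes the pointwise-bounded remainder into the $\kappa_t\,\eta$-drift on the right and keeps the residual as the $\tilde{\eta}$-martingale $G_{t,z}(v)$; the continuous corrector $2(v,\mathcal{N}^{\varrho}v)_0\,dw^{\varrho}$ appears identically on both sides and its $dV_t$-bound $|(v,\mathcal{N}v)_0|\le N\|v\|_0^2$ follows from one integration by parts, as it is first order in $v$.

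Part $(ii)$ is treated the same way on the cross terms with the data $g$ and $h$. The covariation of $\mathfrak{M}(v)$ with the free-term martingale plus the corrector $2\{v_{t-}g_t\}_{H^0}\,dM_t$ yields $2(\mathcal{N}^{\varrho}v,g^{\varrho})_0\,dV_t+2\int_{Z^1}(\mathcal{I}v_{t-},h)_0\,\eta(dt,dz)$ together with the $dw$- and $\tilde{\eta}$-correctors. Splitting $\eta$ again, the continuous drift $(\mathcal{N}^{\varrho}v,g^{\varrho})_0$ is integrated by parts to move the derivative off $v$, giving $\le N\|v\|_0\|g\|_1$; in the compensator part of the jump covariation I change variables and expand the integrand $(\delta+\rho(\tilde{\zeta}^{1;-1}))h(\tilde{\zeta}^{1;-1})|\det\nabla\tilde{\zeta}^{1;-1}|-h$ into three pieces. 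The two pieces carrying a Jacobian or $\rho$ factor are bounded pointwise in $z$ by $N\|h(z)\|_0(\bar{K}^1+l^1)$ and are absorbed into the $\hat{\kappa}_t\,\eta$-drift on the right (with $\int_{Z^1}\hat{\kappa}_t^2\,\pi_t^1(dz)\le N$), while the irreducible aggregate piece collapses to $\int_{Z^1}(h_t(\tilde{\zeta}_t^{1;-1}(z),z)-h_t(z))\,\pi_t^1(dz)$, which is exactly the second summand of $r_t$. The surviving $\tilde{\eta}$-integrands combine into $2\int_{Z^1}(w,h)_0\,\tilde{\eta}(dt,dz)$, so I set $\bar{G}_{t,z}(v):=(w,h)_0$ and bound $|\bar{G}_{t,z}(v)|\le N\|v\|_0\|h(z)\|_0$ by one more change of variables and \eqref{ineq:detHminus1}.

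I expect the crux to lie in the martingale bookkeeping rather than in any single estimate. The top-order content ($\|v\|_1$ in part $(i)$, and the corresponding high-order content of $h$ in part $(ii)$) cancels only when the algebraic identity $2a(b-a)=|b|^2-|a|^2-|b-a|^2$ and the change of variables $y=\tilde{\zeta}_t^1(x,z)$ are applied in tandem, so the accounting must be done carefully to avoid leaving an uncontrolled $\|v\|_1^2$; this is the same mechanism that produced the negative square terms in \eqref{ineq:coercivityuncorrelated}. The second delicate point is that the genuine mean drift $\int_{Z^1}(h(\tilde{\zeta}^{1;-1})-h)\,\pi_t^1(dz)$ in $r_t$ cannot be made small pointwise in $z$; its eventual control in $H^0$ by $\int_{Z^1}\|h(z)\|_{\beta/2}^2\,\pi_t^1(dz)$ — needed for the square-integrability of $r_t$ demanded by Assumption \ref{asm:specSES}$(ii)$ — rests on the $\beta/2$-fractional regularity of $h$ and on $\int_{Z^1}(K^1)^{\beta}\,\pi_t^1(dz)\le N_0$. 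The remaining verifications, namely measurability of $\kappa_t,\hat{\kappa}_t,G_{t,z}(v),\bar{G}_{t,z}(v)$ and membership of the residual integrands in $\hat{L}_{loc}^2(Q)$, are routine and only need to be recorded.
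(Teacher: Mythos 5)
Your strategy coincides with the paper's own proof: split $\eta(dt,dz)=\pi_t^1(dz)\,dV_t+\tilde{\eta}(dt,dz)$ so that the compensated part of the jump quadratic variation joins the drift and Lemma \ref{lem:CoercvitySIDE} applies; use the identity $\|\mathcal{I}_{t,z}v\|_0^2+2(v,\mathcal{I}_{t,z}v)_0=\|w\|_0^2-\|v\|_0^2$ with $w^l=(\delta_{l\bar{l}}+\rho_t^{1;l\bar{l}}(z))v^{\bar{l}}(\tilde{\zeta}_t^1(z))$; then change variables $y=\tilde{\zeta}_t^1(x,z)$. Your part \textit{(ii)} is essentially the paper's argument. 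The genuine gap is in part \textit{(i)}, in the integrability bookkeeping that decides which terms may sit in the $\eta$-drift and which in the $\tilde{\eta}$-martingale. You assert a single $\kappa_t$ ``built from $l^1$, $(l^1)^2$ and $\bar{K}^1$'' satisfying \emph{both} $\int_{Z^1}\kappa_t\,\pi_t^1(dz)\leq N$ and $\int_{Z^1}\kappa_t^2\,\pi_t^1(dz)\leq N$, on the grounds that each of these functions is bounded and square-integrable. That premise is false: Assumption \ref{asm:proofSIDEassumpzero}\textit{(iv)} gives only $\int_{Z^1}l_t^1(z)^2\,\pi_t^1(dz)\leq N_0$, with no sup bound on $l^1$; hence $(l^1)^2$ need not be square-$\pi^1$-integrable, and $l^1$ (or $\bar{K}^1$) standing alone need not be $\pi^1$-integrable at all (bounded plus square-integrable does not imply integrable when $\pi^1$ is infinite). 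Consequently no single coefficient can serve both roles, and your ``routing'' of terms between the $\eta$-drift (which, per the lemma, needs $\int\kappa\,\pi^1\leq N$) and the $\tilde{\eta}$-martingale (which needs $\int\bar{\kappa}^2\,\pi^1\leq N$ so that the integrand lies in $\hat{L}^2_{loc}(Q)$) is unjustified as stated. The split must be structural, as in the paper: only the first-order terms $G_{t,z}(v)=2(v_{t-},\rho_t^1(z)v_{t-})_0-(v_{t-},v_{t-}\func{div}\zeta_t^1(z))_0$, whose bound $(2l^1+N\bar{K}^1)\|v_{t-}\|_0^2$ has a square-integrable coefficient, may remain in the $\tilde{\eta}$-integral; the rest (the paper's $D_1,D_2,D_3$) must be pushed into the $\eta$-drift.

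Even for that remainder your pointwise bound is incomplete, and this is the second half of the gap. After the change of variables, $D_1$ and $D_2$ contain the differences $\rho^1_{t,\mathnormal{sym}}(\tilde{\zeta}_t^{1;-1}(z),z)-\rho^1_{t,\mathnormal{sym}}(\cdot,z)$ and $\det\nabla\tilde{\zeta}_t^{1;-1}-1+\func{div}\zeta_t^1$, and the crude bounds by $2l^1$ or $\bar{K}^1$ give coefficients that are only square-integrable, hence inadmissible for the $\eta$-drift. The assumptions supply exactly the tool your proposal never invokes in part \textit{(i)}: the H\"older conditions $[\rho^1_{t,\mathnormal{sym}}(\cdot,z)]_{\beta/2}\leq\tilde{l}_t^1(z)$ and $[\func{div}\zeta_t^1(\cdot,z)]_{\beta/2}\leq\tilde{K}_t^1(z)$, which together with $|\zeta_t^1|\leq K_t^1$ and \eqref{ineq:detHminus1}, \eqref{ineq:detHminus1minustr} yield
\begin{equation}
|D_1|\leq N\bigl(l^1\bar{K}^1+\tilde{l}^1(K^1)^{\beta/2}\bigr)\|v_{t-}\|_0^2,\qquad
|D_2|\leq N\bigl((\bar{K}^1)^2+\tilde{K}^1(K^1)^{\beta/2}\bigr)\|v_{t-}\|_0^2 .
\end{equation}
Every resulting term in $\kappa_t=(l^1)^2+l^1\bar{K}^1+\tilde{l}^1(K^1)^{\beta/2}+(\bar{K}^1)^2+\tilde{K}^1(K^1)^{\beta/2}$ is a product of two square-$\pi^1$-integrable functions, hence $\pi^1$-integrable by Cauchy--Schwarz, while $\bar{\kappa}_t=l^1+\bar{K}^1$ is square-integrable; this is what makes $\int_{Z^1}(\kappa_t+\bar{\kappa}_t^2)\,\pi_t^1(dz)\leq N$ true. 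Your $\kappa$, lacking the H\"older terms $\tilde{l}^1(K^1)^{\beta/2}$ and $\tilde{K}^1(K^1)^{\beta/2}$, cannot close this step, so the verification of Assumption \ref{asm:specSES}$(0,1)$\textit{(i)} fails as written.
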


\begin{proof}
$(i)$ Owing to the divergence theorem, we have 
\begin{equation*}
2(v_{t},\mathcal{N}_{t}^{\varrho }v_{t})_{0}=(v_{t},u_{t}\func{div}\sigma
_{t}^{1\varrho })_{0}+2(v_{t},\upsilon _{t}^{1\varrho }v_{t})_{0}, \;\;
\forall \varrho\in \mathbf{N},
\end{equation*}
and hence $\mathbf{P}$-a.s., 
\begin{equation*}
|2(v_{t},\mathcal{N}_{t}v_{t})_{0}|dV_t\leq N||v_{t}||^{2}dV_t.
\end{equation*}
By virtue of Lemma \ref{lem:CoercvitySIDE}$(i)$, it suffices estimate 
\begin{equation*}
Q:=2\langle v_{t},\mathcal{J}^{1}_{t,z}v_{t}\rangle
_{1}dV_{t}+\int_{Z^{1}}\| \mathcal{I}_{t,z}v_{t-}\| _{0}^{2}\eta (dt,dz)
+2\int_{Z^{1}}(v_{t-},\mathcal{I}_{t,z}v_{t-})_{0}\tilde{\eta}\left(
dt,dz\right) .
\end{equation*}
An application of divergence theorem shows that 
\begin{equation*}
Q =\int_{Z^{1}}P_{t,z}(u)\eta (dt,dz)+\int_{Z^{1}}G_{t,z}(v)\tilde{\eta}%
(dt,dz),
\end{equation*}
where%
\begin{equation*}
G_{t,z}(v):=2(v_{t-},\rho _{t}^{1}(z)v_{t-})_{0}-(v_{t-},v_{t-}\func{div}%
\zeta_t^{1}(z))_{0},
\end{equation*}
and $P_{t,z}(v):=D_{1}+D_{2}+D_{3}$ with 
\begin{align*}
D_{1}& :=2(v_{t-}(\tilde{\zeta}_{t}^{1}(z)),\rho _{t}^{1}(z)v_{t-}(\tilde{%
\zeta}_{t}^{1}))_{0}-2(v_{t-},\rho _{t}^{1}(z)v_{t-})_{0}, \\
D_{2}& :=\| v_{t-}(\tilde{\zeta}_{t}^{1}(z))\| _{0}^{2}-\| v_{t-}\|
_{0}^{2}+(v_{t-},v_{t-}\func{div}\zeta_t^{1}(z))_{0}, \quad D_{3}:=\| \rho
_{t}^{1}(z)v_{t-}(\tilde{\zeta}_t^1(z))\| _{0}^{2}.
\end{align*}
Given our assumptions, it is clear that $\mathbf{P}$-a.s., 
\begin{equation*}
G_{t,z}(v)dV_t\leq N\left(l_{t}^{1}(z)+\bar{K}_{t}^{1}(z)\right)\| v_{t-}\|
_{0}^{2}dV_t\;\; \mathnormal{\ and }\;\; D_{3}dV_t\leq l_{t}^{1}\left(
z\right) ^{2}||v_{t-}||_{0}^{2}dV_t,
\end{equation*}
where in the last inequality we used the change of variable formula.
Changing the variable of integration and using \eqref{ineq:detHminus1} and %
\eqref{ineq:detHminus1minustr}, we find that $d\mathbf{P}$ -a.s.,%
\begin{align*}
D_{1}dV_t& \leq N\left( v_{t-},v_{t-}\left|\rho _{t}^{1}(\tilde{\zeta}%
_{t}^{1;-1}(z),z)\det \nabla \tilde{\zeta}_{t}^{1;-1}(z)-\rho
_{t}^{1}(z)\right|\right) _{0}dV_t \\
& \leq N\left( l_{t}^{1}(z)\bar{K}_{t}^{1}(z)+\tilde{l}%
_{t}^{1}(z)K_{t}^{1}(z)^{\frac{\beta }{2}}\right) \| v_{t-}\| _{0}^{2}dV_t,
\end{align*}%
and%
\begin{align*}
D_{2}dV_t& =\left( v_{t-},v_{t-}|\det \nabla \tilde{\zeta}_{t}^{1;-1}(z)-1+%
\func{div}\zeta_t^{1}(z)|\right) _{0}dV_t \leq \left( \bar{K}_{t}^{1}(z)^{2}+%
\tilde{K}_{t}^{1}(z)K_{t}^{1}(z)^{\frac{\beta }{2}}\right)N\| v_{t-}\|
_{0}^{2}dV_t.
\end{align*}
Setting 
\begin{equation*}
\kappa_t(z)=l_t^1(z)^2+l_t^1\bar{K}_t^1(z)+\tilde{l}_t^1(z)K_t^1(z)^{\frac{%
\beta}{2}}+\bar{K}_{t}^{1}(z)^{2}+\tilde{K}_{t}^{1}(z)K_{t}^{1}(z)^{\frac{%
\beta }{2}}, \quad \bar{\kappa}_t(z)=l_t^1(z)+\bar{K}_t^1(z), \;z\in Z^1,
\end{equation*}
and appealing to our assumptions, we complete the proof $(i)$.

$(ii)$ By the divergence theorem, we have 
\begin{equation*}
(g_{t}^{\varrho },\mathcal{N}_{t}^{\varrho }v_{t})_{0}= (g_{t}^{\varrho },%
\func{div}\sigma _{t}^{1\varrho }v_{t})_{0}+(\sigma ^{1;i\varrho }\partial
_{i}g_{t}^{\varrho },v_{t})_{0}, \;\;\forall \rho\in \mathbf{N},
\end{equation*}
and thus by the Cauchy-Schwartz inequality, 
\begin{equation*}
|(g_{t},\mathcal{N}_{t}v_{t})_{0}|dV_t\leq N\| v_{t}\| _{0}\| g_{t}\|
_{1}dV_t.
\end{equation*}
Changing the variable of integration, we obtain 
\begin{align*}
(\mathcal{I}_{t,z}v_{t-},h_{t}(z))_{0}& =\left(h_{t}(z),\left(v_{t-}(\tilde{%
\zeta}_{t}^{1}(z))-v_{t-}+\rho _{t}^{1}(z)v_{t-}(\tilde{\zeta}%
_{t}^{1}(z)\right)\right)_{0} \\
& =(h_{t}(\tilde{\zeta}_{t}^{1;-1}(z),z)-h_{t}(z),v_{t-})_{0}+(h_{t}(\tilde{%
\zeta}_{t}^{1;-1}(z),z),(\det \nabla \tilde{\zeta}%
_{t}^{1;-1}(z)-1)v_{t-})_{0} \\
& \quad +(h_{t}(z),\rho _{t}^{1}(z)v_{t-}(\tilde{\zeta}_{t}^{1}(z)))_{0}.
\end{align*}
A simple calculation shows that $\mathbf{P}$-a.s., 
\begin{gather*}
2\int_{Z^{1}}(\mathcal{I}_{t,z}v_{t-},h_{t}(z))_{0}\eta
(dt,dz)+2\int_{Z^{1}}(v_{t-},h_{t}(z))_{0}\tilde{\eta}(dt,dz) \\
\le 2\| v_{t-}\| _{0}r^1_{t}dV_{t}+\int_{Z^{1}}\bar{P}_{t,z}( v) \eta (
dt,dz)+\int_{Z^1}\bar{G}_{t,z}(v)\tilde{\eta}(dt,dz),
\end{gather*}%
where 
\begin{equation*}
r^1_{t}:=\left| \left| \int_{Z^{1}}(h_{t}(\tilde{\zeta}%
_{t}^{1;-1}(z),z)-h_{t}(z))\pi _{t}^{1}(dz)\right| \right| _{0}, \quad \bar{G%
}_{t,z}(v)=(h_{t}(\tilde{\zeta}_{t}^{1;-1}(z),z),v_t),
\end{equation*}%
and%
\begin{equation*}
P_{t,z}(v):=(h_{t}(\tilde{\zeta}_{t}^{1;-1}(z)),v_{t-}(\det \nabla \tilde{%
\zeta}_{t}^{1;-1}(z)-1))_{0}+(h_{t}(z),\rho _{t}^{1}(z)v_{t-}(\tilde{\zeta}%
_{t}^{1}(z)))_{0}.
\end{equation*}
Applying the change of variable formula and Holder's inequality, $\mathbf{P}$%
-a.s. we obtain 
\begin{equation*}
\int_{Z^{1}}\tilde{P}_{t,z}(u)\eta (dt,dz)\leq N\| v_{t-}\| _{0}\int_{Z^{1}}(%
\bar{K}_{t}^{1}(z)+l_{t}^{1}(z))\| h_{t}(z)\| _{0}\eta (dt,dz)
\end{equation*}
and 
\begin{align*}
|\tilde{G}_{t,z}(v)| dV_t\leq N\| v_{t-}\| _{0}\| h_{t}(z)\| _{0}dV_t.
\end{align*}%
This completes the proof.
\end{proof}

Let $d\in \mathbf{N}$. For a function $v\in H^{m}(\mathbf{R}^{d_{1}},\mathbf{%
R}^{d})$, define the linear operator $\mathcal{D}v\in H^{m-1}(\mathbf{R}%
^{d_{1}};\allowbreak \mathbf{R}^{d(d_{1}+1)})$ by 
\begin{equation*}
\mathcal{D}v=\left( \partial _{0}v,\partial _{1}v,\ldots ,\partial
_{d_{1}}v\right) =\tilde{v}
\end{equation*}%
with $\tilde{v}^{l0}=v^{l}$ and $\tilde{v}^{lj}=\partial _{j}v^{l},1\leq
l\leq d,0\leq j\leq d_{1}$ (recall $\partial _{0}v=v$). We define $\mathcal{D%
}^{n}v$ for $n\in \mathbf{N}$ by iteratively applying $\mathcal{D}$ $n$%
-times. Recall that $\Lambda =(I-\Delta )^{\frac{1}{2}}$. It is easy to
check that for each $n\in \mathbf{N}$ and all $u,v\in H^{n+1}(\mathbf{R}%
^{d_{1}},\mathbf{R}^{d})$, 
\begin{align}
(u,v)_{n,d}& =(\Lambda ^{n}u,\Lambda ^{n}v)_{0,d}=(\mathcal{D}^{n}u,\mathcal{%
D}^{n}v)_{0,d\bar{d}_{1}^{n}},  \label{eq:IPDn} \\
(\Lambda u,\Lambda ^{-1}v)_{n,d}& =(\Lambda ^{n+1}u,\Lambda ^{n-1}v)_{0,d}=(%
\mathcal{D}^{n}\Lambda u,\mathcal{D}^{n}\Lambda ^{-1}v)_{0,d\bar{d}_{1}^{n}},
\notag \\
(\mathcal{D}^{n}u,\mathcal{D}^{n}v)_{-1,d\bar{d}_{1}^{n}}& =(\mathcal{D}%
^{n}\Lambda ^{-1}u,\mathcal{D}^{n}\Lambda ^{-1}v)_{0,d\bar{d}%
_{1}^{n}}=(u,v)_{n-1,d},  \notag
\end{align}%
where $\bar{d}_{1}=d_{1}+1.$ Let us introduce the operators $\mathcal{E}(%
\mathcal{L})$, $\mathcal{E}(\mathcal{N}),$ and $\mathcal{E}(\mathcal{I}_{z})$
acting on $\phi =(\phi ^{lj})_{1\leq l\leq d_{2},1\leq j\leq \bar{d}_{1}}\in
C_{c}^{\infty }(\mathbf{R}^{d_{1}},\mathbf{R}^{d_{2}\bar{d}_{1}})$ that are
defined as $\mathcal{L},\mathcal{N}$, and $\mathcal{I},$ respectively, but
with the $d_{2}\times d_{2}$-dimensional coefficients $\upsilon _{t}^{k}$, $%
\rho ^{k}$, and $c$ replaced by the $d_{2}\bar{d_{1}}\times d_{2}\bar{d_{1}}$%
-dimensional coefficients given by 
\begin{align}
\upsilon ^{k;lj,\bar{l}\bar{j}\varrho }& =\upsilon ^{k;l\bar{l}\varrho
}\delta _{j\bar{j}}+1_{j\geq 1}(\partial _{j}\sigma ^{k;\bar{j}\varrho
}\delta _{l\bar{l}}+\partial _{j}\upsilon ^{k;l\bar{l}\varrho }\delta _{\bar{%
j}0}),  \label{eq:upsextended} \\
\rho ^{k;lj,\bar{l}\bar{j}}& =\rho _{t}^{k;l\bar{l}}\delta _{\bar{j}%
j}+1_{j\geq 1}(\partial _{j}\rho ^{k;l\bar{l}}\delta _{\bar{j}0}+(\delta _{l%
\bar{l}}+\rho ^{k;l\bar{l}})\partial _{j}\zeta^{k;\bar{j}}),
\label{eq:rhoextended}
\end{align}%
and%
\begin{equation}
c^{lj,\bar{l}\bar{j}}=c^{l\bar{l}}\delta _{\bar{j}j}+\partial _{j}b^{\bar{j}%
}\delta _{l\bar{l}}+\partial _{j}c^{l\bar{l}}\delta _{\bar{j}%
0}+\sum_{k=1}^{2}\left( \upsilon ^{k;l\bar{l}\varrho }\partial _{j}\sigma
^{k;\bar{j}\varrho }+\int_{Z^{k}}\rho ^{k;l\bar{l}}\partial _{j}\zeta^{k;%
\bar{j}}\pi _{t}^{k}(dz)\right) ,  \label{eq:cextended}
\end{equation}%
for $1\leq l,\bar{l}\leq d_{2}$ and $0\leq j,\bar{j}\leq d_{1}$. The
coefficients $\sigma ^{k},b$, and functions $\zeta^{k},k\in \{1,2\},$ remain
unchanged in the definition of $\mathcal{E}(\mathcal{L})$, $\mathcal{E}(%
\mathcal{N}),$ and $\mathcal{E}(\mathcal{I})$. We define $\mathcal{E}^{n}(%
\mathcal{L})$, $\mathcal{E}^{n}(\mathcal{N})$, and $\mathcal{E}^{n}(\mathcal{%
I})$, for $n\in \mathbf{N}$ by iteratively applying $\mathcal{E}$ $n$-times
by the rules (\ref{eq:upsextended})-(\ref{eq:cextended}) above with $\sigma
^{k},b$, and $\zeta^{k},k\in \{1,2\},$ unchanged. A simple calculation shows
that for all $v\in H^{2}( \mathbf{R}^{d_{1}};\mathbf{R}^{d_{2}}) $,%
\begin{equation*}
\mathcal{D}[\mathcal{L}v] =\mathcal{E}(\mathcal{L})\mathcal{D}v,\quad 
\mathcal{D}[\mathcal{N}^{\varrho }v]=\mathcal{E}(\mathcal{N}^{\varrho })%
\mathcal{D}v,\;\varrho \in \mathbf{N},\quad \mathcal{D}[\mathcal{I}_{z}v]=%
\mathcal{E}(\mathcal{I}_{z})\mathcal{D}v. 
\end{equation*}
Continuing, for all $v\in H^{n+1}(\mathbf{R}^{d_1};\mathbf{R}^d)$ we have 
\begin{equation}
\mathcal{D}^{n}[\mathcal{L}v]=\mathcal{E}^{n}(\mathcal{L})\mathcal{D}%
^{n}v,\quad \mathcal{D}^{n}[\mathcal{N}^{\varrho }v]=\mathcal{E}^{n}(%
\mathcal{N}^{\varrho })\mathcal{D}^{n}v,\;\varrho \in \mathbf{N},\quad 
\mathcal{D}^{n}[\mathcal{I}_{z}v]=\mathcal{E}^{n}(\mathcal{I}_{z})\mathcal{D}%
^{n}v.  \label{eq:DnofOp}
\end{equation}%
If Assumption \ref{asm:coeffmain}$(m,d_{2})$ holds, it can readily be
verified by induction and the definitions (\ref{eq:upsextended})-(\ref%
{eq:cextended}) that Assumption \ref{asm:proofSIDEassumpzero}$(0,d_{2}\bar{d}%
_{1}^{m})$ holds for the coefficients of the operators $\mathcal{E}^{m}(%
\mathcal{L}),$ $\mathcal{E}^{m}(\mathcal{N})$ and $\mathcal{E}^{m}(\mathcal{I%
})$. Moreover, owing to our assumptions on the input data, we have 
\begin{gather*}
\mathcal{D}^{m}\phi \in \mathbf{H}^{0}(\mathbf{R}^{d_{1}};\mathbf{R}^{d_{2}%
\bar{d}_{1}^{m}};\mathcal{F}_{0}),\quad \mathcal{D}^{m}f\in \mathbf{H}^{0}(%
\mathbf{R}^{d_{1}};\mathbf{R}^{d_{2}\bar{d}_{1}^{m}}) \\
\mathcal{D}^{m}g\in \mathbf{\zeta}^{1}(\mathbf{R}^{d_{1}};\ell _{2}(\mathbf{R%
}^{d_{2}\bar{d}_{1}^{m}})),\quad \mathcal{D}^{m}h\in \mathbf{H}^{\frac{\beta 
}{2}}(\mathbf{R}^{d_{1}};\mathbf{R}^{d_{2}\bar{d}_{1}^{m}};\pi ^{1}).
\end{gather*}%
Making use of \eqref{eq:IPDn}, \eqref{eq:DnofOp} and applying Lemma \ref%
{lem:GrowthSIDE} to $\mathcal{E}^{m}(\mathcal{L})$, for all $v\in H^{m+1},$
we obtain 
\begin{equation*}
\Vert \mathcal{L}v\Vert _{m-1}=\Vert \mathcal{D}^{m}[\mathcal{L}v]\Vert
_{-1}=\Vert \mathcal{E}^{m}(\mathcal{L})\mathcal{D}^{m}v\Vert _{-1,d_{2}\bar{%
d}_{1}^{m}}\leq N\Vert \mathcal{D}^{m}v\Vert _{1,d_{2}\bar{d}%
_{1}^{m}}=N\Vert v\Vert _{m+1}.
\end{equation*}%
Likewise, for all $v\in H^{m+1},$ we derive 
\begin{equation*}
\Vert \mathcal{N}v\Vert _{m}\leq N\Vert v\Vert _{m+1},\quad \int_{Z^{1}}||%
\mathcal{I}v||_{m}^{2}\pi ^{1}(dz)\leq N\Vert v\Vert _{m+1}^{2}.
\end{equation*}%
By virtue of Lemma \ref{lem:CoercvitySIDE}, we have that for all $v\in
H^{m+1},$%
\begin{gather*}
2(\Lambda v,\Lambda ^{-1}\mathcal{L}_{t}v)_{m}+||\mathcal{N}%
_{t}v||_{m}^{2}+\int_{Z^{1}}||\mathcal{I}_{t,z}v||_{m}^{2}\pi ^{1}\left(
dz\right) \\
=2(\mathcal{D}^{m}\Lambda v,\mathcal{D}^{m}\Lambda ^{-1}[\mathcal{L}%
_{t}v])_{0}+||\mathcal{D}^{n}[\mathcal{N}_{t}v]||_{0}^{2}+\int_{Z^{1}}||%
\mathcal{D}^{m}[\mathcal{I}_{t,z}v]||_{0}^{2}\pi ^{1}(dz) \\
2\langle \mathcal{D}^{m}v,\mathcal{E}^{m}(\mathcal{L}_{t})\mathcal{D}%
^{m}v\rangle _{1}+||\mathcal{E}^{m}(\mathcal{N}_{t})\mathcal{D}%
^{m}v||_{0}^{2}+\int_{Z^{1}}||\mathcal{E}^{m}(\mathcal{I}_{t,z})\mathcal{D}%
^{n}v||_{0}^{2}\pi ^{1}(dz)\leq N\left\vert \left\vert \mathcal{D}%
^{m}v\right\vert \right\vert _{0,d_{2}\bar{d}_{1}^{m}}^{2}=N\left\vert
\left\vert v\right\vert \right\vert _{m}^{2}
\end{gather*}%
Using a similar argument, we find that for all $v\in H^{m+1},$ 
\begin{equation*}
2(\Lambda v,\Lambda ^{-1}(\mathcal{L}_{t}v+f_{t}))_{m}+||\mathcal{N}%
_{t}v+g_{t}||_{m}^{2}+\int_{Z^{1}}||\mathcal{I}_{t,z}v+h_{t}(z)||_{m}^{2}\pi
^{1}(dz)\leq N||v||_{m}^{2}+N\bar{f}_{t},
\end{equation*}%
where 
\begin{equation*}
\bar{f}_{t}=\Vert f_{t}\Vert _{m}^{2}+\Vert g_{t}\Vert
_{m+1}^{2}+\int_{Z^{1}}\Vert h_{t}(z)\Vert _{m+\frac{\beta }{2}}^{2}\pi
_{t}^{1}(dz).
\end{equation*}%
Therefore, Assumption \ref{asm:mainSES}($m,d_{2}$) holds for the equation (%
\ref{eq:SIDE}). Similarly, using Lemmas \ref{lem:SpecialCoercivitySIDE} and %
\ref{lem:fractionalderviativeestimategrowth}, we find that that Assumption %
\ref{asm:specSES}$(m,d_{2})$ holds for equation (\ref{eq:SIDE}) as well. The
statement of the theorem then follows directly from Theorem \ref%
{th:Degenerate}.

\subsection{Appendix}

For each $\kappa $ $\in (0,1)$ and tempered distribution $f$ on $\mathbf{R}%
^{d_{1}}$, we define 
\begin{equation*}
\partial ^{\kappa }f=\mathcal{F}^{-1}[|\cdot |^{\kappa }\mathcal{F}f(\cdot
)],
\end{equation*}%
where $\mathcal{F}$ denotes the Fourier transform and $\mathcal{F}^{-1}$
denotes the inverse Fourier transform.

\begin{lemma}[cf. Lemma 2.1 in \protect\cite{Ko84}]
\label{lem:FractionaKernel} Let $f:\mathbf{R}^{d_1}\rightarrow \mathbf{R}$
be smooth and bounded. Then for each $\kappa \in (0,1)$, there are constants 
$N_{1}=N_{1}(d_{1},\kappa )$, $N_{2}=N_{2}(d_{1},\kappa )$, and $%
N_{3}=N_{2}(d_{1},\kappa )$ such that for all $x,y,z\in \mathbf{R}^{d_1}$, 
\begin{equation}
\partial ^{\kappa }f(x)=N_{1}\int_{\mathbf{R}^{d}}\left( f(x+z)-f(x)\right) 
\frac{dz}{|z|^{d+\delta }}  \label{eq:integraldefoffrac}
\end{equation}%
and 
\begin{equation}
f(x+y)-f(x)=N_{2}\int_{\mathbf{R}^{d_{1}}}\partial ^{\kappa
}f(x-z)k^{(\kappa )}(y,z)dz,  \label{eq:Kerneldifferencefractional}
\end{equation}%
where 
\begin{equation}
k^{(\kappa )}(y,z)=|y+z|^{\kappa -d}-|z|^{\kappa -d}\quad \mathnormal{and}%
\quad \int_{\mathbf{R}^{d_{1}}}|k^{(\kappa )}(y,z)|dz=N_{3}|z|^{\kappa }.
\label{eq:integralofKernelfractional}
\end{equation}
\end{lemma}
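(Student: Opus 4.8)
The plan is to treat the three assertions in turn: I would first establish \eqref{eq:integraldefoffrac} as an identity of Fourier multipliers, then deduce the inversion formula \eqref{eq:Kerneldifferencefractional} from the mapping properties of the Riesz potential, and finally obtain \eqref{eq:integralofKernelfractional} by scaling. Throughout I would argue first for $f\in\mathcal{S}(\mathbf{R}^{d_1})$ and pass to smooth bounded $f$ by approximation. The starting observation is that, since $\kappa\in(0,1)$ and $f$ is smooth and bounded, the integral on the right of \eqref{eq:integraldefoffrac} converges absolutely: near $z=0$ one has $|f(x+z)-f(x)|\le [\nabla f]_{0}\,|z|$, so the integrand is dominated by $|z|^{1-d_1-\kappa}$, which is integrable on $\{|z|<1\}$ precisely because $\kappa<1$; on $\{|z|\ge 1\}$ one uses $|f(x+z)-f(x)|\le 2[f]_{0}$ together with integrability of $|z|^{-d_1-\kappa}$ at infinity. (Here the exponent in \eqref{eq:integraldefoffrac} should read $d_1+\kappa$, and the right-hand side of \eqref{eq:integralofKernelfractional} should read $N_3|y|^{\kappa}$.)

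For \eqref{eq:integraldefoffrac} I would compute the Fourier symbol of the operator $Tf(x):=\int_{\mathbf{R}^{d_1}}(f(x+z)-f(x))|z|^{-d_1-\kappa}\,dz$. Since $T$ is translation invariant, for $f\in\mathcal{S}$ one gets $\widehat{Tf}(\xi)=m(\xi)\hat f(\xi)$ with $m(\xi)=\int_{\mathbf{R}^{d_1}}(e^{i2\pi\xi\cdot z}-1)|z|^{-d_1-\kappa}\,dz$, the interchange being justified by the absolute convergence noted above after splitting into $\{|z|<1\}$ and $\{|z|\ge 1\}$ and applying Fubini. The odd part integrates to zero, so $m(\xi)=\int(\cos(2\pi\xi\cdot z)-1)|z|^{-d_1-\kappa}\,dz\le 0$; the scaling substitution $z\mapsto z/\lambda$ gives $m(\lambda\xi)=\lambda^{\kappa}m(\xi)$, while rotation invariance of the kernel shows $m$ is radial, whence $m(\xi)=-c|\xi|^{\kappa}$ for a constant $c=c(d_1,\kappa)>0$. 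Comparing with the definition $\partial^{\kappa}f=\mathcal{F}^{-1}[|\cdot|^{\kappa}\mathcal{F}f]$ yields $\partial^{\kappa}f=N_1 Tf$ with $N_1=-1/c$, which is \eqref{eq:integraldefoffrac}.

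For \eqref{eq:Kerneldifferencefractional} I would use that the Riesz potential $\partial^{-\kappa}=\mathcal{F}^{-1}[|\cdot|^{-\kappa}\mathcal{F}\,\cdot\,]$ is convolution with $\gamma_{d_1,\kappa}|z|^{\kappa-d_1}$ (valid since $0<\kappa<d_1$), so that $\partial^{-\kappa}\partial^{\kappa}f=f$ gives $f(x)=\gamma_{d_1,\kappa}\int \partial^{\kappa}f(x-z)|z|^{\kappa-d_1}\,dz$. Writing the analogous formula for $f(x+y)$, shifting the integration variable by $y$, and subtracting produces \eqref{eq:Kerneldifferencefractional} with $N_2=\gamma_{d_1,\kappa}$ and $k^{(\kappa)}(y,z)=|y+z|^{\kappa-d_1}-|z|^{\kappa-d_1}$; the crucial point is that, although the individual Riesz integral need not converge absolutely, the difference kernel does, since near $z=0$ and $z=-y$ each term is separately integrable ($\kappa-d_1>-d_1$) and at infinity $k^{(\kappa)}(y,z)=O(|z|^{\kappa-d_1-1})$, integrable exactly when $\kappa<1$. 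Finally, \eqref{eq:integralofKernelfractional} follows by the substitution $z=|y|u$ and a rotation sending $y$ to $|y|e_1$, reducing the integral to $|y|^{\kappa}\int_{\mathbf{R}^{d_1}}\big||e_1+u|^{\kappa-d_1}-|u|^{\kappa-d_1}\big|\,du=:N_3|y|^{\kappa}$, the constant $N_3$ being finite by the same near-field and far-field bounds.

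The main obstacle is the rigorous justification of the multiplier identity in the second step — interchanging the singular integral with the Fourier transform in the presence of the subtracted term $f(x)$ and of the nonsmooth multiplier $|\xi|^{\kappa}$ at the origin — and, relatedly, extending both identities from $f\in\mathcal{S}$ to merely smooth bounded $f$, where $\partial^{\kappa}f$ must be read as a tempered distribution. I expect to handle the latter through the convergence bookkeeping above, which shows that the right-hand sides depend continuously and locally on $f$, combined with a standard mollification argument.
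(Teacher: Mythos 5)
The paper offers no proof of this lemma to compare against: it is imported, typos and all, from Lemma 2.1 of \cite{Ko84}, and is stated in the appendix purely so that it can be invoked in the proof of Lemma \ref{lem:fractionalderviativeestimategrowth}. Judged on its own merits, your argument is correct and is the standard (essentially Komatsu's) route: identify the integro-difference operator as a translation-invariant Fourier multiplier and pin down the symbol $-c|\xi|^{\kappa}$ by homogeneity and rotation invariance to get \eqref{eq:integraldefoffrac}; represent $f=\partial^{-\kappa}\partial^{\kappa}f$ by the Riesz kernel $\gamma_{d_1,\kappa}|z|^{\kappa-d_1}$, translate by $y$ and subtract to get \eqref{eq:Kerneldifferencefractional}; and scale $z=|y|u$ to get \eqref{eq:integralofKernelfractional}. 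You also correctly repaired the statement's misprints (the exponent must be $d_1+\kappa$ in \eqref{eq:integraldefoffrac}, and the right-hand side of \eqref{eq:integralofKernelfractional} must be $N_3|y|^{\kappa}$), which is indeed how the lemma is used later. Two small refinements: for $f\in\mathcal{S}$ your subtraction step is cleaner than you suggest, since $\partial^{\kappa}f$ then decays like $|x|^{-d_1-\kappa}$ and both Riesz integrals being subtracted converge absolutely, so no appeal to the difference kernel is needed at that stage; and the strict negativity $m(e_1)<0$ (so that $c\neq 0$ and $N_1=-1/c$ is finite) deserves one line, namely that $\cos(2\pi z_1)-1<0$ on a set of positive measure.

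The only genuinely soft spot is your closing extension to merely smooth bounded $f$. For such $f$, $\hat f$ is just a tempered distribution, and $|\xi|^{\kappa}\hat f$ is a product of a distribution with a function that is not smooth at $\xi=0$; this product, hence $\partial^{\kappa}f$ itself, is not automatically defined, so "a standard mollification argument" cannot be invoked as a black box — one must either restrict the class of $f$ or make sense of both sides by pairing against test functions and exploiting the absolute convergence you established. This does not damage the paper: in its sole application the lemma is used for $h$ smooth with compact support in $x$ (the passage to general $h\in L^{2}(Z,\mathcal{Z},\pi;H^{\frac{\beta}{2}})$ is carried out separately, by approximation, inside the proof of Lemma \ref{lem:fractionalderviativeestimategrowth}), and for Schwartz-class functions your proof is complete.
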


\begin{lemma}
\label{lem:fractionalderviativeestimategrowth} Let $(Z,\mathcal{Z},\pi )$ be
a sigma-finite measure space. Let $H:\mathbf{R}^{d_{1}}\times Z\rightarrow 
\mathbf{R}^{d_{1}}$ be $\mathcal{B}(\mathbf{R}^{d})\otimes \mathcal{Z}$%
-measurable and assume that for all $(x,z)\in \mathbf{R}^{d_{1}}\times Z$, 
\begin{equation*}
|\zeta(x,z)|\le K(z)\quad \mathnormal{and} \quad |\nabla \zeta(x,z)|\le \bar{%
K}(z)
\end{equation*}%
where $K,\bar{K}:Z\rightarrow \mathbf{R}_{+}$ is a $\mathcal{Z}$-measurable
function for which there is a positive constant $N_{0}$ such that for some
fixed $\beta \in (0,2]$, 
\begin{equation*}
\sup_{z\in Z}K(z)+\sup_{z\in Z}\bar{K}(z)+\int_{Z}\left( K(z)^{\beta }+\bar{K%
}(z)^{2}\right) \pi (dz)<N_{0}
\end{equation*}%
Assume that there is a constant $\eta <1$ such that $(x,z)\in \{(x,z)\in 
\mathbf{R}^{d_{1}}\times Z:|\nabla \zeta(x,z)|>\eta \},$ 
\begin{equation*}
| \left( I_{d_{1}}+\nabla \zeta_t(x,z)\right) ^{-1}| \le N_{0}.
\end{equation*}%
Then there is a constant $N=N(d_{1},N_{0},\beta ,\eta )$ such that for all $%
\mathcal{B}(\mathbf{R}^{d_{1}})\otimes \mathcal{Z}$-measurable $h:\mathbf{R}%
^{d_{1}}\times Z\rightarrow \mathbf{R}^{d_{2}}$ with $h\in L^{2}(Z,\mathcal{Z%
},\pi ;H^{\frac{\beta }{2}}(\mathbf{R}^{d_{1}};\mathbf{R}^{d_{2}})),$ 
\begin{equation*}
\int_{\mathbf{R}^{d_{1}}}\left| \int_{Z}\left(
h(x+\zeta(x,z),z)-h(x,z)\right) \pi (dz)\right| ^{2}dx\le N\int_{Z}\| h(z)\|
_{\frac{\beta }{2}}^{2}\pi (dz).
\end{equation*}
\end{lemma}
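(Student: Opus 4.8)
The plan is to prove the estimate via the fractional-integration-by-parts representation of Lemma~\ref{lem:FractionaKernel}, after reducing to the homogeneous fractional seminorm. Set $\kappa=\beta/2\in(0,1]$ and, for $z\in Z$, write $G(x,z)=h(x+\zeta(x,z),z)-h(x,z)$ and $F(x)=\int_Z G(x,z)\,\pi(dz)$, so that the claim is $\|F\|_{L^2}^2\le N\int_Z\|h(z)\|_{\beta/2}^2\,\pi(dz)$. Since $(1+4\pi^2|\xi|^2)^{\kappa}\ge(4\pi^2)^{\kappa}|\xi|^{2\kappa}$ gives $\|\partial^{\kappa}h(z)\|_{L^2}^2\le N\|h(z)\|_{\beta/2}^2$, it suffices to prove $\|F\|_{L^2}^2\le N\int_Z\|\partial^{\kappa}h(z)\|_{L^2}^2\,\pi(dz)$, the estimate being understood componentwise in the $\mathbf{R}^{d_2}$-valued function $h$. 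On $\{z:K(z)=0\}$ we have $\zeta(\cdot,z)\equiv0$ and hence $G(\cdot,z)\equiv0$, so I may integrate over $\{K(z)>0\}$ throughout. The case $\beta=2$ (i.e.\ $\kappa=1$) lies outside Lemma~\ref{lem:FractionaKernel} and will be handled by the same scheme but with the elementary identity $G(x,z)=\int_0^1\nabla h(x+\theta\zeta(x,z),z)\cdot\zeta(x,z)\,d\theta$ replacing the kernel representation; below I describe the main case $\kappa\in(0,1)$.

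First I would insert the representation \eqref{eq:Kerneldifferencefractional}, applied for each fixed $z$ to $f=h(\cdot,z)$ with shift $\zeta(x,z)$ (renaming the kernel's integration variable to $w\in\mathbf{R}^{d_1}$ to avoid the clash with $z\in Z$):
\[
G(x,z)=N_2\int_{\mathbf{R}^{d_1}}\partial^{\kappa}h(x-w,z)\,k^{(\kappa)}(\zeta(x,z),w)\,dw .
\]
Substituting this into $F$ and viewing the result as an integral over the product measure $\pi(dz)\,dw$, I would apply the Cauchy--Schwarz inequality in $(z,w)$ with weight $K(z)^{\kappa}$, splitting the integrand as $\bigl[\,|k^{(\kappa)}|^{1/2}K(z)^{\kappa/2}\,\bigr]\cdot\bigl[\,\partial^{\kappa}h(x-w,z)\,|k^{(\kappa)}|^{1/2}K(z)^{-\kappa/2}\,\bigr]$. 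By \eqref{eq:integralofKernelfractional} the first factor integrates to $N_3\int_Z K(z)^{\kappa}|\zeta(x,z)|^{\kappa}\pi(dz)\le N_3\int_Z K(z)^{2\kappa}\pi(dz)=N_3\int_Z K(z)^{\beta}\pi(dz)\le N_3N_0$, uniformly in $x$; this is exactly why the weight $K(z)^{\kappa}$ is chosen, since only $\int_Z K^{\beta}$ (and not $\int_Z K^{\kappa}$) is controlled. Hence, after integrating in $x$ and using Tonelli,
\[
\|F\|_{L^2}^2\le N\int_Z \frac{1}{K(z)^{\kappa}}\Bigl(\int_{\mathbf{R}^{d_1}}\!\!\int_{\mathbf{R}^{d_1}}|\partial^{\kappa}h(x-w,z)|^2\,|k^{(\kappa)}(\zeta(x,z),w)|\,dw\,dx\Bigr)\pi(dz).
\]
The problem is thereby reduced to showing, for each fixed $z$, that the inner double integral $I(z)$ is at most $N K(z)^{\kappa}\|\partial^{\kappa}h(z)\|_{L^2}^2$, whereupon the factors $K(z)^{\pm\kappa}$ cancel and the claim follows.

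The main obstacle is the estimate of $I(z)$, and it is here that the diffeomorphism and gradient hypotheses are essential. After Tonelli and the substitution $y=x-w$ one is left with
\[
I(z)=\int_{\mathbf{R}^{d_1}}|\partial^{\kappa}h(y,z)|^2\Bigl(\int_{\mathbf{R}^{d_1}}|k^{(\kappa)}(\zeta(y+w,z),w)|\,dw\Bigr)dy ,
\]
so it suffices to prove the uniform kernel bound $\int_{\mathbf{R}^{d_1}}|k^{(\kappa)}(\zeta(y+w,z),w)|\,dw\le N K(z)^{\kappa}$. This does \emph{not} follow directly from \eqref{eq:integralofKernelfractional}, because the argument $\zeta(y+w,z)$ now varies with the integration variable $w$, so the constant-shift identity must be recovered by hand. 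I would split the $w$-integral at $|w|=2K(z)$. On the near region $|w|\le 2K(z)$ I would bound $|w|^{\kappa-d_1}$ directly (its integral is $\sim K(z)^{\kappa}$) and treat $|\zeta(y+w,z)+w|^{\kappa-d_1}$ by the change of variables $w\mapsto\Phi(w)=w+\zeta(y+w,z)$, which is a global diffeomorphism by Remark~\ref{rem:PropertiesofH} with Jacobian bounded above and below via \eqref{ineq:BoundtildH} and \eqref{ineq:detHminus1}, and whose image lies in $\{|w'|\le 3K(z)\}$; this again yields a term $\sim K(z)^{\kappa}$. On the far region $|w|>2K(z)$, where $|\zeta(y+w,z)|\le K(z)\le|w|/2$, I would use the mean value theorem for $r\mapsto|r|^{\kappa-d_1}$ along the segment joining $w$ and $w+\zeta(y+w,z)$ (on which $|r|\ge|w|/2$) to get $|k^{(\kappa)}(\zeta(y+w,z),w)|\le N K(z)\,|w|^{\kappa-d_1-1}$, and then integrate, using $\kappa<1$, to obtain $N K(z)\cdot K(z)^{\kappa-1}=N K(z)^{\kappa}$. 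Collecting the two regions gives the kernel bound and completes the proof, the constant $N$ depending only on $d_1,N_0,\beta,\eta$ through the Jacobian estimates of Remark~\ref{rem:PropertiesofH}. For $\beta=2$ the same near/far splitting applies after the change of variables, with $K(z)^{\kappa}$ replaced by $K(z)$ and $\bar{K}(z)$ entering through the Jacobian.
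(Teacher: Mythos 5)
Your main case $\beta\in(0,2)$ is correct and is essentially the paper's own argument: the same representation from Lemma \ref{lem:FractionaKernel}, the same Cauchy--Schwarz with weight $K(z)^{\pm\beta/2}$ (you apply it jointly in $(z,w)$, the paper applies it first in the kernel variable and then in $z$ --- same estimate), and the same near/far splitting at $|w|=2K(z)$ handled by the diffeomorphism and Jacobian bounds of Remark \ref{rem:PropertiesofH} in the near region and the mean value theorem (with $|w+t\zeta|\ge|w|/2$ and the condition $\beta/2<1$) in the far region. The only organizational difference is that you apply Tonelli first and reduce everything to the uniform kernel bound $\int_{\mathbf{R}^{d_1}}|k^{(\beta/2)}(\zeta(y+w,z),w)|\,dw\le NK(z)^{\beta/2}$, a Schur-test formulation, whereas the paper keeps the $x$-integration and changes variables $x\mapsto\tilde{\zeta}(x,z)$ region by region; both routes use identical ingredients and constants. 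Two remarks: your observation that $\{K(z)=0\}$ contributes nothing (so division by $K(z)^{\beta/2}$ is harmless) is a point the paper silently skips; conversely, you skip the paper's preliminary reduction, by density, to $h$ smooth and compactly supported in $x$ with $\int_Z\sup_x|\nabla h(x,z)|^2\pi(dz)<\infty$ --- this step is needed both to make sense of $\int_Z\bigl(h(x+\zeta(x,z),z)-h(x,z)\bigr)\pi(dz)$ for general $h$ and to invoke Lemma \ref{lem:FractionaKernel}, which is stated only for smooth bounded functions, so it should be added.

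The one genuine gap is the case $\beta=2$. After your FTC identity and weighted Cauchy--Schwarz, the "same scheme" requires the change of variables $x\mapsto x+\theta\zeta(x,z)$ for \emph{every} $\theta\in[0,1]$. But the hypothesis on $(I_{d_1}+\nabla\zeta)^{-1}$ only controls the map at $\theta=1$: when $\bar{K}(z)\ge 1$ the matrix $I_{d_1}+\theta\nabla\zeta(x,z)$ can be singular at intermediate $\theta$ (take $\nabla\zeta=-2I_{d_1}$ and $\theta=1/2$), so $\tilde{\zeta}_\theta(\cdot,z)$ need not be a diffeomorphism and no inverse-Jacobian bound is available. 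The necessary fix --- and this is exactly what the paper does --- is to split $Z$ at a fixed threshold: on $\{\bar{K}(z)<1/2\}$, Remark \ref{rem:PropertiesofH} gives the diffeomorphism property and a bounded inverse Jacobian uniformly in $\theta\in[0,1]$, so the FTC argument goes through; on $\{\bar{K}(z)\ge 1/2\}$, whose $\pi$-measure is at most $4N_0$ by Chebyshev, one abandons the FTC identity, uses the crude bound $|h(x+\zeta(x,z),z)-h(x,z)|\le|h(\tilde{\zeta}(x,z),z)|+|h(x,z)|$, and changes variables only with the $\theta=1$ map, which the hypothesis does license. Your closing phrase about "the same near/far splitting" with "$\bar{K}(z)$ entering through the Jacobian" gestures at this, but the splitting you actually defined lives in the kernel variable $|w|\lessgtr 2K(z)$, which has no analogue when $\beta=2$; the split that is needed is in $z$, according to the size of $\bar{K}(z)$.
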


\begin{proof}
It is easy to see that for any $\mathcal{B}(\mathbf{R}^{d_{1}})\otimes 
\mathcal{Z}$-measurable $h:\mathbf{R}^{d_{1}}\times Z\rightarrow \mathbf{R}%
^{d_{2}}$ such that 
\begin{equation}
\int_{Z}\sup_{x\in \mathbf{R}^{d_{1}}}|\nabla h(x,z)|^{2}\pi (dz)<\infty ,
\label{asm:gradienthsup}
\end{equation}%
the integral $\int_{Z}(h(x+\zeta(x,z),z)-h(x,z))\pi (dz)$ is well-defined.
Moreover, for any $\mathcal{B}(\mathbf{R}^{d_{1}})\otimes \mathcal{Z}$%
-measurable $h:\mathbf{R}^{d_{1}}\times Z\rightarrow \mathbf{R}$ with $h\in
L^{2}(Z,\mathcal{Z},\pi ;H^{\frac{\beta }{2}}(\mathbf{R}^{d_{1}};\mathbf{R}%
^{d_{2}})),$ we can always find a sequence $(h^{n})_{n\in \mathbf{N}}$ of $%
\mathcal{B}(\mathbf{R}^{d_{1}})\otimes \mathcal{Z}$-measurable processes
such that each element of the sequence is smooth with compact support in $x$
and satisfies \eqref{asm:gradienthsup} and 
\begin{equation*}
\lim_{n\rightarrow \infty }\int_{Z}\Vert h(z)-h^{n}(z)\Vert _{\frac{\beta }{2%
}}^{2}\pi (dz)=0.
\end{equation*}%
Thus, if we prove this lemma for $h$ that is smooth with compact support in $%
x$ and satisfies \eqref{asm:gradienthsup}, then we can conclude that the
sequence 
\begin{equation*}
\int_{Z}\left( h^{n}(x+\zeta(x,z),z)-h^{n}(x,z)\right) \pi (dz),\;\;n\in 
\mathbf{N},
\end{equation*}%
is Cauchy in $H^{0}(\mathbf{R}^{d_{1}};\mathbf{R}^{d_{2}})$. We then define 
\begin{equation*}
\int_{Z}\left( h(x+\zeta(x,z),z)-h(x,z)\right) \pi (dz)
\end{equation*}%
for any $\mathcal{B}(\mathbf{R}^{d_{1}})\otimes \mathcal{Z}$-measurable $h:%
\mathbf{R}^{d_{1}}\times Z\rightarrow \mathbf{R}$ with $h\in L^{2}(Z,%
\mathcal{Z},\pi ;H^{\frac{\beta }{2}}(\mathbf{R}^{d_{1}};\mathbf{R}%
^{d_{2}})) $ to be the unique $H^{0}(\mathbf{R}^{d_{1}};\mathbf{R}^{d_{2}})$
limit of the Cauchy sequence. Hence, it suffices to consider $h$ that is
smooth with compact support in $x$ and satisfies \eqref{asm:gradienthsup}.
First, let us consider the case $\beta \in (0,2)$. By Lemma \ref%
{lem:FractionaKernel}, we have 
\begin{gather*}
\int_{\mathbf{R}^{d_{1}}}\left\vert \int_{Z}\left( h(\tilde{\zeta}%
(x,z),z)-h(x,z)\right) \pi (dz)\right\vert ^{2}dx \\
=N_{2}^{2}\int_{\mathbf{R}^{d_{1}}}\left\vert \int_{Z}\int_{\mathbf{R}%
^{d_{1}}}\partial ^{\frac{\beta }{2}}h(x-y,z)k^{(\frac{\beta }{2}%
)}(\zeta(x,z),y))\, dy\pi (dz)\right\vert ^{2}dx \\
\ =:N_{2}^{2}\int_{\mathbf{R}^{d_{1}}}|\int_{Z}A(x,z)\pi (dz)|^{2}dx.
\end{gather*}%
Applying H{\"{o}}lder's inequality and Lemma \ref{lem:FractionaKernel}, for
all $x$ and $z$, we have 
\begin{align*}
A(x,z)& \leq \left( \int_{\mathbf{R}^{d_{1}}}|\partial ^{\beta
/2}h(x-y,z)|^{2}k^{(\frac{\beta }{2})}(\zeta(x,z),y))\, dy\right) ^{\frac{1}{%
2}}\left( \int_{\mathbf{R}^{d_{1}}}k^{(\frac{\beta }{2})}(\zeta(x,z),y))\,
dy\right) ^{\frac{1}{2}} \\
& =\sqrt{N_{3}}\left( \int_{\mathbf{R}^{d_{1}}}|\partial ^{\beta
/2}h(x-y,z)|^{2}k^{(\frac{\beta }{2})}(\zeta(x,z),y))\, dy\right) ^{\frac{1}{%
2}}|\zeta_t(x,z)|^{\frac{\beta }{4}} \\
& \leq K(z)^{\frac{\beta }{2}}\sqrt{N_{3}}\left( \int_{\mathbf{R}%
^{d_{1}}}|\partial ^{\frac{\beta }{2}}h(x-y,z)|^{2}k^{(\frac{\beta }{2}%
)}(\zeta(x,z),y))\, dy K(z)^{-\frac{\beta }{2}}\right) ^{\frac{1}{2}}.
\end{align*}%
Using H{\"{o}}lder's inequality again, for all $x$, we get 
\begin{equation*}
\left\vert \int_{Z}A(x,z)\pi (dz)\right\vert ^{2}\leq
N_{3}N_{0}\int_{Z}\int_{\mathbf{R}^{d_{1}}}|\partial ^{\beta
/2}h(x-y,z)|^{2}k^{(\frac{\beta }{2})}(\zeta(x,z),y))\, dyK(z)^{-\frac{\beta 
}{2}}\pi (dz).
\end{equation*}%
For each $x$ and $z$, we set 
\begin{gather*}
B(x,z)=\int_{|y|\leq 2K(z)}|\partial ^{\frac{\beta }{2}}h(x-y,z)|^{2}k^{(%
\frac{\beta }{2})}(\zeta(x,z),y))\, dy \\
C(x,z)=\int_{|y|>2K(z)}|\partial ^{\frac{\beta }{2}}h(x-y,z)|^{2}k^{(\frac{%
\beta }{2})}(\zeta(x,z),y))\, dy.
\end{gather*}%
Changing the variable integration, for all $x$ and $z$, we find 
\begin{align*}
B(x,z)& \leq \int_{|y+\zeta(x,z)|\leq 3K(z)}|\partial ^{\frac{\beta }{2}%
}h(x-y,z)|^{2}\frac{dy}{|y+\zeta(x,z)|^{d_{1}-\frac{\beta }{2}}} \\
& \quad +\int_{|y|\leq 2K(z)}|\partial ^{\frac{\beta }{2}}h(x-y,z)|^{2}\frac{%
dy}{|y|^{d_{1}-\frac{\beta }{2}}}=:B_{1}(x,z)+B_{2}(x,z),
\end{align*}%
and 
\begin{align*}
B_{1}(x,z)& \leq \int_{|y|\leq 3K(z)}|\partial ^{\frac{\beta }{2}}h((\tilde{%
\zeta}(x,z)-y),z)|^{2}|\frac{dy}{|y|^{d_{1}-\frac{\beta }{2}}} \\
& \leq K(z)^{\frac{\beta }{2}}\int_{|y|\leq 3}|\partial ^{\frac{\beta }{2}%
}h((\tilde{\zeta}(x,z)-yK(z)),z)|^{2}\frac{dy}{|y|^{d_{1}-\frac{\beta }{2}}},
\\
B_{2}(x,z)& \leq K(z)^{\frac{\beta }{2}}\int_{|y|\leq 2}|\partial ^{\beta
/2}h(x-yK(z),z)|^{2}\frac{dy}{|y|^{d_{1}-\frac{\beta }{2}}}.
\end{align*}%
Owing to Remark \ref{rem:PropertiesofH}, for all $z$, the map $x\mapsto
x+\zeta(x,z)=\tilde{\zeta}(x,z)$ is a global diffeomorphism and 
\begin{equation*}
\det \nabla \tilde{\zeta}^{-1}(x,z)\leq N.
\end{equation*}%
for some constant $N=N(N_{0},d_{1},\eta ).$ Thus, by the change of variable
formula, there is a constant $N=N(d_{1},N_{0},\beta ,\eta )$ such that 
\begin{gather*}
\int_{\mathbf{R}^{d_{1}}}\int_{Z}B_{1}(x,z)K(z)^{-\frac{\beta }{2}}\pi (dz)dx
\\
\leq \int_{Z}\int_{|y|\leq 3}\int_{\mathbf{R}^{d_{1}}}|\partial ^{\beta
/2}h((\tilde{\zeta}(x,z)-yK(z)),z)|^{2}dx\frac{dy}{|y|^{d_{1}-\frac{\beta }{2%
}}}\pi (dz) \\
\leq \int_{Z}\int_{|y|\leq 3}\int_{\mathbf{R}^{d_{1}}}|\partial ^{\beta
/2}h((x-yK(z)),z)|^{2}|\det \nabla \tilde{\zeta}^{-1}(x,z)|dx\frac{dy}{%
|y|^{d_{1}-\frac{\beta }{2}}}\pi (dz) \\
\leq N\int_{Z}\int_{\mathbf{R}^{d_{1}}}|\partial ^{\frac{\beta }{2}%
}h(x,z)|^{2}dx\pi (dz)
\end{gather*}%
and 
\begin{equation*}
\int_{\mathbf{R}^{d_{1}}}\int_{Z}K(z)^{\frac{\beta }{2}}B_{2}(x,z)dx\pi
(dz)\leq N\int_{Z}\int_{\mathbf{R}^{d_{1}}}|\partial ^{\frac{\beta }{2}%
}h(x,z)|^{2}dx\pi (dz).
\end{equation*}%
For all $x,y,$ and $z$ such that $|\zeta(x,z)|\leq K(z)\leq \frac{1}{2}|y|$,
we have 
\begin{gather*}
\left\vert \frac{1}{|y+\zeta(x,z)|^{d_{1}-\frac{\beta }{2}}}-\frac{1}{%
|y|^{d_{1}-\beta /2}}\right\vert \\
\leq \left\vert d_{1}-\frac{\beta }{2}\right\vert \left\vert \left( \frac{1}{%
|y+\zeta(x,z)|^{1+d_{1}-\frac{\beta }{2}}}+\frac{1}{|y|^{1+d_{1}-\frac{\beta 
}{2}}}\right) \right\vert |\zeta(x,z)|\leq 3\left\vert d_{1}-\frac{\beta }{2}%
\right\vert \frac{|\zeta(x,z)|}{|y|^{1+d_{1}-\frac{\beta }{2}}},
\end{gather*}%
and hence for all $x$ and $z$, 
\begin{align*}
C(x,z)& =\int_{|y|>2K(z)}|\partial ^{\frac{\beta }{2}}h(x-y,z)|^{2}\left%
\vert \frac{1}{|y+\zeta(x,z)|^{d-\frac{\beta }{2}}}-\frac{1}{|y|^{d-\frac{%
\beta }{2}}}\right\vert\, dy \\
& \leq N\int_{|y|>2K(z)}|\partial ^{\frac{\beta }{2}}h(x-y,z)|^{2}\frac{%
|K(z)|}{|y|^{1+d_{1}-\frac{\beta }{2}}}\, dy \\
& \leq NK(z)^{\frac{\beta }{2}}\int_{|y|>2}|\partial ^{\frac{\beta }{2}%
}h((x-K(z)y),z)|^{2}\frac{dy}{|y|^{1+d_{1}-\frac{\beta }{2}}}.
\end{align*}%
Estimating as above, we find that there is a constant $N=N(d_{1},N_{0},\beta
)$ 
\begin{equation*}
\int_{Z}\int_{\mathbf{R}^{d_{1}}}K(z)^{\frac{\beta }{2}}C(x,z)dx\pi (dz)\leq
N\int_{Z}\int_{\mathbf{R}^{d_{1}}}|\partial ^{\frac{\beta }{2}%
}h(x,z)|^{2}dx\pi (dz).
\end{equation*}%
Combining the above estimates, we obtain the desired estimate for $\beta \in
(0,2)$. Let us now consider the case $\beta =2$. It follows from Remark \ref%
{rem:PropertiesofH} that for each $\theta \in \lbrack 0,1]$, on the set of $%
z\in \{z:\bar{K}(z)<\frac{1}{2}\},$ the map $x\mapsto x+\theta \zeta(x,z)=%
\tilde{\zeta}_{\theta }(x,z)$ is a global diffeomorphism and 
\begin{equation*}
\det \nabla \tilde{\zeta}_{\theta }^{-1}(x,z)\leq N,
\end{equation*}%
for some constant $N=N(N_{0},d_{1}).$ Hence, making use of Taylor's theorem
and the change of variable formula, we find 
\begin{gather*}
\int_{\mathbf{R}^{d_{1}}}\left\vert \int_{Z}\left(
h(x+\zeta(x,z),z)-h(x,z)\right) \pi (dz)\right\vert ^{2}dx \\
\leq \int_{\mathbf{R}^{d_{1}}}\left\vert \int_{\bar{K}(z)\geq \frac{1}{2}%
}\left( h(x+\zeta(x,z),z)-h(x,z)\right) \pi (dz)\right\vert ^{2}dx \\
+\int_{\mathbf{R}^{d_{1}}}\left\vert \int_{\bar{K}(z)<\frac{1}{2}%
}\int_{0}^{1}|\nabla h(x+\theta \zeta(x,z),z)\right\vert d\theta K(z)\pi
(dz)|^{2}dx \\
\leq \pi \left\{\bar{K}(z)\geq \frac{1}{2}\right\}\int_{\bar{K}(z)\geq \eta
}\int_{\mathbf{R}^{d_{1}}}|h(x,z)|^{2}|\det \tilde{\zeta}^{-1}(x,z)+1|dx\pi
(dz) \\
+N_{0}\int_{\bar{K}(z)<\frac{1}{2}}\int_{\mathbf{R}^{d_{1}}}\int_{0}^{1}|%
\nabla h(x,z)|^{2}|\det \nabla \tilde{\zeta}_{\theta }^{-1}(x,z)|d\theta
dx\pi (dz)\leq N\int_{Z}\Vert h(z)\Vert _{1}^{2}\pi (dz).
\end{gather*}%
This completes the proof.
\end{proof}

\textbf{Acknowledgment.} We express our gratitude to Istv\'an Gy\"{o}ngy for
useful discussions we have had on subject of stochastic evolution equations
driven by jump processes.

\bibliographystyle{alpha}
\bibliography{../bibliography}

\end{document}